\newtheorem{thm}{Theorem}[section]
\newtheorem{lemma}[thm]{Lemma}
\newtheorem{prop}[thm]{Proposition}
\newtheorem{cor}[thm]{Corollary}
\newtheorem*{thm*}{Theorem}
\theoremstyle{definition}
\newtheorem{definition}[thm]{Definition}
\newtheorem{question}[thm]{Question}
\theoremstyle{remark}
\newtheorem*{ex}{Example}
\newtheorem{remark}[thm]{Remark}
\def\ee{\mathrm{e}}
\def\diag{\mathrm{diag}}
\def\leb{\mathrm{m}}
 \newcommand{\one}{\mathbf{1}}
\def\supp{\mathrm{\mathop{supp}}}
\def\Ell{\mathrm{L}}
\newcommand{\T}{\mathbb{T}}
\def\lrLambda{\langle\Lambda\rangle}
\newcommand{\N}{\mathbb{N}}
\newcommand{\Z}{\mathbb{Z}}
\newcommand{\R}{\mathbb{R}}
\newcommand{\Q}{\mathbb{Q}}
\newcommand{\C}{\mathbb{C}}
\newcommand{\lin}{\mathrm{lin}}
\newcommand{\aveN}{\frac{1}{N}\sum_{n=1}^N}
\newcommand{\limaveN}{\lim_{N\to\infty} \aveN}
\newcommand{\aveNN}{\frac{1}{2N+1}\sum_{n=-N}^N}
\newcommand{\aveTau}{\frac{1}{\tau}\int_{0}^\tau}
\newcommand{\limaveTau}{\lim_{\tau\to\infty} \aveTau}
\renewcommand{\Hat}[1]{\widehat{#1}}
\newcommand{\ol}[1]{\overline{#1}}
\def\Fix{\mathop{\mathrm{Fix}}}
\def\Re{\mathop{\mathrm{Re}}}
\def\Im{\mathop{\mathrm{Im}}}
\def\atom{\text{ \upshape atom}}
\newenvironment{iiv}{\begin{enumerate}[{\upshape(i)}]}{\end{enumerate}}
\newenvironment{abc}{\begin{enumerate}[{\upshape(a)}]}{\end{enumerate}}
\def\dd{\mathrm{d}}
\def\LLL{\mathcal{L}}
\def\sprod#1#2{(#1|#2)}
\def\dprod#1#2{\langle #1,#2\rangle}
\def\cnu{\mathrm{cnu}}
\def\abs#1{|#1|}
\def\uni{\mathrm{uni}}
\def\Dlim{\mathop{\mathrm{D\text{-}}\lim}}
\def\cconv{\overline{\mathop{\mathrm{conv}}}}
\def\Id{I}
\begin{document}

\title[Wiener's lemma along  subsequences]{Wiener's lemma along primes and other subsequences}

\author{Christophe Cuny}
\author{Tanja Eisner}
\author{B\'alint Farkas}

\address{Institut de Sciences Exactes et Appliqu\'ees, University of New-Caledonia, 
New-Caledonia}
\email{christophe.cuny@univ-brest.fr}

\address{Institute of Mathematics, University of Leipzig\newline
P.O. Box 100 920, 04009 Leipzig, Germany}
\email{eisner@math.uni-leipzig.de}
\address{School of Mathematics and Natural Sciences, University of Wuppertal\newline Gau\ss stra\ss e 20,
42119 Wuppertal, Germany}
\email{farkas@math.uni-wuppertal.de}

\keywords{Wiener's lemma for subsequences, extremal measures, polynomials and primes, ergodic theorems, orbits of operators and operator semigroups}

\subjclass[2010]{43A05, 43A25, 47A10, 47B15, 47A35, 37A30, 47D06}
\thanks{B.F. gratefully acknowledges of the support of the DAAD-Tempus PPP Grant  $\#$ 57448965}

\maketitle

\begin{abstract}
Inspired by subsequential ergodic theorems, we study the validity of
Wiener's lemma and the extremal behavior of a measure $\mu$ on the unit circle via the behavior of its Fourier coefficients $\Hat\mu(k_n)$ along subsequences $(k_n)$. We focus on arithmetic
subsequences such as polynomials, primes and polynomials of primes, and also discuss connections to rigidity sequences, return times sequences and strongly sweeping out sequences as well as 
measures on $\R$.   We also present consequences for orbits of operators and of $C_0$-semigroups on Hilbert and Banach spaces extending the results of Goldstein \cite{G2} and Goldstein, Nagy \cite{G3}.  The results are complemented by some open questions and indication of interesting research directions.

\medskip
{\color{red} After this paper had been published, it  was pointed out to us by 
Emmanuel Lesigne and M\'at\'e Wierdl that there is a gap in the Example on return 
times sequences along polynomials on page \pageref{page:polret}. Indeed, to make the 
argument there work, one needs a Wiener--Wintner type result for polynomial 
averages with a precise information about the limit, and this is presently out of reach.}
\end{abstract}

\section{Introduction}
Wiener's lemma is a classical result connecting the asymptotic behavior of the Fourier coefficients
\[
\Hat\mu(n)=\int_\T z^n\dd\mu(z)
\]
of a complex Borel measure $\mu$ on the unit circle $\T$ with its values on singletons. Despite its elementary proof, it has found remarkable applications in several areas of mathematics such as ergodic theory, operator theory,  group theory and number theory.

\begin{thm}[Wiener's Lemma]\label{thm:Wiener}
Let $\mu$ be a complex Borel measure\footnote{By definition finite.} on the unit circle $\T$. Then
\[
\limaveN |\Hat{\mu}(n)|^2=\sum_{a \text{ \upshape atom}} |\mu(\{a\})|^2.
\]
\end{thm}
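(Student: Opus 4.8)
The plan is to express the Cesàro average of $|\Hat\mu(n)|^2$ as an integral against the square of the measure and then identify the limit via a pointwise computation. The key observation is that $|\Hat\mu(n)|^2 = \Hat\mu(n)\ol{\Hat\mu(n)}$, and writing out both Fourier coefficients as integrals gives a double integral over $\T\times\T$. Using Fubini's theorem (justified since $\mu$ is finite, hence $\mu\times\ol\mu$ is a finite measure on the product), I would obtain
\begin{equation*}
\aveN|\Hat\mu(n)|^2 = \int_\T\int_\T \Bigl(\aveN (z\ol w)^n\Bigr)\dd\mu(z)\dd\ol\mu(w).
\end{equation*}
The inner average $\aveN (z\ol w)^n$ is a geometric-type sum that I would evaluate explicitly: writing $z\ol w = \rme^{\rmi\theta}$ it equals $1$ when $z=w$ (i.e. $\theta=0$) and is bounded in modulus by $\tfrac{1}{N}\cdot\tfrac{2}{|1-z\ol w|}$ otherwise, so it tends to $\one_{\{z=w\}}$ pointwise as $N\to\infty$.

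The main step is then to pass to the limit inside the double integral. First I would record the uniform bound $\bigl|\aveN (z\ol w)^n\bigr|\le 1$ for all $N$ and all $z,w\in\T$, which holds since each term has modulus one. This provides a dominating function (the constant $1$, integrable against the finite measure $|\mu|\times|\mu|$), so the dominated convergence theorem applies and yields
\begin{equation*}
\limaveN|\Hat\mu(n)|^2 = \int_\T\int_\T \one_{\{z=w\}}\dd\mu(z)\dd\ol\mu(w).
\end{equation*}
The remaining task is to recognize the right-hand side as the sum over atoms. The diagonal $\Delta=\{(z,w):z=w\}$ is integrated against the product measure $\mu\times\ol\mu$, and by disintegrating (or directly, since only atoms of $\mu$ contribute positive $\mu\times\ol\mu$-measure to $\Delta$) one gets $\sum_a \mu(\{a\})\ol{\mu(\{a\})}=\sum_a|\mu(\{a\})|^2$, the sum ranging over the atoms $a$ of $\mu$.

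The step I expect to be the main obstacle is the clean justification that $(\mu\times\ol\mu)(\Delta)=\sum_a|\mu(\{a\})|^2$, since the diagonal is a $\mu\times\ol\mu$-null set except where both coordinates land on a common atom. I would make this rigorous by writing $\mu=\mu_{\mathrm{at}}+\mu_{\mathrm{cont}}$ as the sum of its atomic and continuous parts: the continuous part contributes nothing to the diagonal because a continuous measure gives zero mass to every single point, hence $(\mu_{\mathrm{cont}}\times\ol\mu)(\Delta)=\int\mu_{\mathrm{cont}}(\{w\})\dd\ol\mu(w)=0$, and similarly for the mixed terms. Only the purely atomic cross term survives, where the integral over the diagonal collapses to the stated sum. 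This measure-theoretic bookkeeping, rather than any hard analysis, is where care is needed.
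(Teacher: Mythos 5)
Your proof is correct and follows essentially the same route the paper uses for its subsequence generalization (Proposition \ref{prop:W-seq}): expand $|\Hat\mu(n)|^2$ as a double integral via Fubini, pass to the limit by dominated convergence using the geometric-sum bound, and identify the integral of $\one_{\{z=w\}}$ against $\mu\times\ol\mu$ as the sum over atoms. The extra care you take with the atomic/continuous decomposition at the diagonal is exactly the bookkeeping the paper compresses into its one-line Fubini computation $\int_\T \mu(\{\lambda_2\})\,\dd\ol{\mu}(\lambda_2)=\sum_{a\ \atom}|\mu(\{a\})|^2$.
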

\noindent (Since $\ol{\Hat{\mu}}(-n)={\Hat{\ol{\mu}}(n)}$, one can replace here $\aveN $ by $\aveNN$.)

As a consequence, one has the following characterization of Dirac measures in terms of their Fourier coefficients. Here we restrict ourselves to probability measures and give the proof for the reader's convenience.
\begin{cor}[Extremal behavior of Dirac measures]\label{cor:W-extremal}
For a Borel probability measure $\mu$ on $\T$ the following assertions are equivalent:
\begin{iiv}
\item $\lim\limits_{N\to\infty}\aveN |\Hat{\mu}(n)|^2=1$.
\item $\lim\limits_{n\to\infty} |\Hat{\mu}(n)|=1$.
\item $\mu$ is a Dirac measure.
\end{iiv}
\end{cor}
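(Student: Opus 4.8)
The plan is to establish the cyclic chain of implications (iii)$\Rightarrow$(ii)$\Rightarrow$(i)$\Rightarrow$(iii), reserving Wiener's Lemma for the last and only nontrivial step.

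For (iii)$\Rightarrow$(ii) I would simply compute: if $\mu=\delta_a$ for some $a\in\T$, then $\Hat\mu(n)=\int_\T z^n\,\dd\delta_a(z)=a^n$, so that $|\Hat\mu(n)|=|a|^n=1$ for every $n$, and in particular $\lim_{n\to\infty}|\Hat\mu(n)|=1$. For (ii)$\Rightarrow$(i) I would use that Ces\`aro averaging preserves limits: from $|\Hat\mu(n)|\to 1$ we get $|\Hat\mu(n)|^2\to 1$, hence $\limaveN|\Hat\mu(n)|^2=1$. Both of these steps are immediate and require nothing beyond the definitions.

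The substance lies in (i)$\Rightarrow$(iii), and here I would invoke Theorem~\ref{thm:Wiener} to rewrite (i) as $\sum_{a\atom}|\mu(\{a\})|^2=1$. Writing $p_a:=\mu(\{a\})\in[0,1]$ for the (nonnegative, since $\mu$ is a probability measure) atomic masses, I would use that the atomic part has total mass $\sum_{a}p_a\le\mu(\T)=1$. The key elementary observation is that $0\le p_a\le 1$ forces $p_a^2\le p_a$, so that
\[
1=\sum_{a}p_a^2\le\sum_{a}p_a\le 1.
\]
Equality throughout then forces $p_a^2=p_a$ for every atom, i.e.\ each $p_a\in\{0,1\}$, together with $\sum_{a}p_a=1$. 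Hence exactly one atom carries mass $1$ while all the remaining mass vanishes, so $\mu=\delta_a$ is a Dirac measure, which is (iii).

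I do not expect a genuine obstacle here: once Wiener's Lemma translates the averaged condition (i) into the statement $\sum_{a}p_a^2=1$ about the atomic masses, the remaining extremality argument is just the rigidity of the inequality $\sum_a p_a^2\le\sum_a p_a$ on $[0,1]$. The only point demanding a little care is the bookkeeping that the atoms form a (countable) family whose total mass is at most $\mu(\T)=1$, which is standard for finite measures.
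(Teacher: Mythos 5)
Your proposal is correct and follows essentially the same route as the paper: the easy implications are handled by the direct computation $\Hat{\delta_a}(n)=a^n$ and the stability of limits under Ces\`aro averaging, and the implication (i)$\Rightarrow$(iii) is obtained by combining Wiener's Lemma with the rigidity of the chain $1=\sum_a \mu(\{a\})^2\le\sum_a\mu(\{a\})\le\mu(\T)=1$, forcing each atomic mass into $\{0,1\}$. Your write-up is only marginally more explicit than the paper's in spelling out that the total atomic mass equals $1$ and that exactly one atom survives.
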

Note that by the Koopman--von Neumann lemma, see Lemma \ref{lem:KvN} (b) below and the paragraph preceding it, (i) is equivalent to $|\Hat{\mu}(n)|\to 1$ in density as $n\to\infty$ i.e., to the existence of a subset $J$ of density $1$ with $\lim_{n\to\infty,n\in J}|\Hat{\mu}(n)| =1$.
\begin{proof}
Let $\mu$  be a probability measure on $\T$.
If $\mu(\{a\})=1$ for some $a\in\T$, then $\Hat{\mu}(n)=a^n$. Whence the implication (iii)$\Rightarrow$(ii) follows, while (ii)$\Rightarrow$(i) is trivial. To show (i)$\Rightarrow$(iii), suppose (i). Theorem \ref{thm:Wiener} 
yields
\[
1=\limaveN |\Hat{\mu}(n)|^2=\sum_{a \atom} \mu(\{a\})^2\leq
\sum_{a \atom} \mu(\{a\}) =1,
\]
implying $\mu(\{a\})^2=\mu(\{a\})$  or, equivalently, $\mu(\{a\})\in \{0,1\}$ for every atom $a$. We conclude that $\mu$ is a Dirac measure.
\end{proof}
The previous two results have the following operator theoretic counterparts.
\begin{thm}[Goldstein \cite{Goldstein1}, Ballotti, Goldstein \cite{G1}]\label{abstwiener}
Let $T$ be a (linear) contraction on a Hilbert space $H$, and for $\lambda\in \C$ denote by $P_\lambda$ the orthogonal projection onto $\ker(\lambda-T)$. Then for every $x,\:y\in H$
\begin{align*}
\lim_{N\to\infty}\frac1N\sum_{n=1}^N\abs{\sprod{T^nx}{y}}^2&=\sum_{\lambda\in\T}  \abs{\sprod {P_\lambda x}{P_\lambda y}}^2=\sum_{\lambda\in\T}  \abs{\sprod {P_\lambda x}{y}}^2.
\end{align*}
\end{thm}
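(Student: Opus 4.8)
The plan is to linearize the modulus square by passing to the tensor square, reduce to the mean ergodic theorem, and then identify the relevant fixed space. Writing $\ol H$ for the conjugate Hilbert space and setting $S:=T\otimes\ol T$ on $H\otimes\ol H$ (again a contraction, being a tensor product of contractions), a direct computation using $\sprod{u\otimes\ol a}{v\otimes\ol b}=\sprod uv\,\sprod ba$ gives $\sprod{S^n(x\otimes\ol x)}{y\otimes\ol y}=\abs{\sprod{T^nx}{y}}^2$ for every $n$. Hence the Cesàro averages in the statement equal $\frac1N\sum_{n=1}^N\sprod{S^n(x\otimes\ol x)}{y\otimes\ol y}$. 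Since $S$ is a contraction on a Hilbert space, the mean ergodic theorem yields $\frac1N\sum_{n=1}^NS^n\to P$ strongly, with $P$ the orthogonal projection onto $\Fix(S)=\ker(I-S)$, so the limit equals $\sprod{P(x\otimes\ol x)}{y\otimes\ol y}$.

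The heart of the matter is to compute $\Fix(S)$. Identifying $H\otimes\ol H$ with the Hilbert--Schmidt operators on $H$ (so that $x\otimes\ol y$ becomes the rank-one operator $z\mapsto\sprod zy\,x$), the operator $S$ acts by $A\mapsto TAT^*$, and $\Fix(S)$ consists of the Hilbert--Schmidt operators $A$ with $TAT^*=A$. I claim this fixed space equals $\bigoplus_{\lambda\in\T}\ker(\lambda-T)\otimes\ol{\ker(\lambda-T)}$, i.e.\ the operators supported on the unimodular point spectrum. The inclusion $\supseteq$ is a short computation: if $Tu=\lambda u$ and $Tw=\lambda w$ with $\abs\lambda=1$, then $T^*w=\ol\lambda w$ and $T(u\otimes\ol w)T^*=u\otimes\ol w$. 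The reverse inclusion is the main obstacle, and is exactly where compactness is essential: for a unitary with purely continuous spectrum the right-hand side is $0$, and indeed no nonzero compact operator commutes with a unitary having no eigenvalues.

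To prove $\subseteq$ I would first reduce to $A=A^*$, since $TAT^*=A$ forces $TA^*T^*=A^*$, whence $\Re A$ and $\Im A$ are both fixed. For compact self-adjoint $A$ with $TAT^*=A$, let $E$ be an eigenspace for an eigenvalue $\mu$ of maximal modulus $\abs\mu=\|A\|$. Using $A=TAT^*$ and the extremal characterization of $\mu$, one checks that for a unit $v\in E$ the vector $T^*v$ again lies in $E$ and has norm one; hence $T^*$ restricts to an isometry, and thus (as $\dim E<\infty$) a unitary, of $E$. Diagonalizing $T^*|_E$ produces an orthonormal basis of unimodular eigenvectors of $T^*$, which by the identity $T^*e=\ol\lambda e\Rightarrow Te=\lambda e$ for contractions are unimodular eigenvectors of $T$; one also gets $TE=E$, so $E$ reduces both $T$ and $A$. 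Peeling off the eigenspaces of $A$ in decreasing order of modulus (finitely many per modulus, accumulating only at $0$ since $A$ is compact) shows that the range of $A$ lies in $H_u:=\ol{\bigoplus_\lambda\ker(\lambda-T)}$ and that $A$ vanishes on $H_u^\perp$. Finally, $TAT^*=A$ together with $T^*v=\ol\lambda v$ on $\ker(\lambda-T)$ gives $A\,\ker(\lambda-T)\subseteq\ker(\lambda-T)$, so $A$ commutes with each $P_\lambda$ and therefore $A=\sum_\lambda P_\lambda AP_\lambda$, which is the asserted inclusion.

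With $\Fix(S)=\bigoplus_\lambda\ker(\lambda-T)\otimes\ol{\ker(\lambda-T)}$ in hand, the orthogonal projection onto it is $P=\sum_\lambda P_\lambda\otimes\ol{P_\lambda}$, the summands being mutually orthogonal since distinct unimodular eigenspaces are orthogonal. Thus $P(x\otimes\ol x)=\sum_\lambda P_\lambda x\otimes\ol{P_\lambda x}$, and pairing with $y\otimes\ol y$ yields $\sprod{P(x\otimes\ol x)}{y\otimes\ol y}=\sum_\lambda\abs{\sprod{P_\lambda x}{y}}^2$; since $P_\lambda=P_\lambda^*=P_\lambda^2$ we may rewrite $\sprod{P_\lambda x}{y}=\sprod{P_\lambda x}{P_\lambda y}$, which produces both displayed expressions at once (only countably many $P_\lambda$ are nonzero by Bessel's inequality, so all sums converge). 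An alternative route avoiding the fixed-space computation is to dilate $T$ to a unitary $U$ on some $K\supseteq H$ and apply the scalar Wiener Lemma \ref{thm:Wiener} to the spectral measures $\sprod{E_U(\cdot)x}{y}$; there the obstacle merely shifts to matching the eigenprojections of $U$ on $K$ with those of $T$ on $H$.
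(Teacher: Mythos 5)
Your proof is correct, but it takes a genuinely different route from the paper's. The paper does not prove Theorem~\ref{abstwiener} directly; it notes that the result follows from the scalar Wiener lemma, and the argument it actually carries out (for the subsequential generalization, Theorem~\ref{thm:linop}) runs through the Sz.-Nagy--Foia\c{s} decomposition $H=H_\uni\oplus H_\cnu$, Foguel's weak stability of $T$ on $H_\cnu$ to kill the completely non-unitary part of the Ces\`aro averages, and then the spectral theorem on $H_\uni$ to reduce to Theorem~\ref{thm:Wiener} applied to the scalar spectral measures $\mu_{x,y}$. You instead linearize $|\sprod{T^nx}{y}|^2$ on the tensor square, invoke the von Neumann mean ergodic theorem for the contraction $T\otimes\ol T$, and compute $\Fix(T\otimes\ol T)$ inside the Hilbert--Schmidt operators; your extremal-eigenvalue argument showing that a compact self-adjoint fixed point of $A\mapsto TAT^*$ is supported on the unimodular point spectrum is sound (I checked the key steps: $\|T^*v\|=1$ and $T^*v\in E$ via the equality case of $|\sprod{Aw}{w}|\le\|A\|\|w\|^2$, the identity $TQ_1T^*=Q_1$ that makes the peeling-off induction go through, and the final commutation with each $P_\lambda$). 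What each approach buys: the paper's route is short modulo the cited structural machinery and, crucially, adapts verbatim to weighted and subsequential averages, which is the whole point of Section~\ref{sec:operators}; your route is more self-contained (no dilation theory, no spectral theorem, no Foguel), with compactness of Hilbert--Schmidt operators playing the role that weak stability on $H_\cnu$ plays in the paper, but it is tied to the plain Ces\`aro structure through the mean ergodic theorem and would need the "good sequence" hypotheses and the full spectral analysis of $T\otimes\ol T$ to cover the subsequential versions. Your closing remark about the unitary dilation alternative is also apt, and the difficulty you flag there (matching eigenprojections of the dilation with those of $T$) is real.
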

It is easy to deduce the previous theorem from Wiener's lemma (and vice versa), even though the original proof of Goldstein went along different lines. 

The following Banach space version of Corollary \ref{cor:W-extremal} is more complex, see also Lin \cite{L} and Baillon,  Guerre-Delabri\`ere \cite{BG} for related results. Note that in these papers, the results are formulated for $C_0$-semigroups but are also valid for powers of operators with analogous proofs.
\begin{thm}[Goldstein, Nagy \cite{G3}]\label{abstwienerextr}
Let $T$ be a (linear) contraction on a Banach space $E$. Suppose for some $x\in E$
\begin{align*}
|\dprod{T^nx}{x'}|\to |\dprod{x}{x'}|\quad \text{for every $x'\in E'$ as $n\to \infty$.}
\end{align*}
Then $(\lambda-T)x=0$ for some $\lambda\in \T$.
\end{thm}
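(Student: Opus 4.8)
The plan is to reduce at once to the case $x\neq 0$ (the conclusion being vacuous otherwise) and to realize a limit of the orbit in the \emph{bidual} $E''$, where boundedness together with the Banach--Alaoglu theorem supplies the compactness that a general, possibly non-reflexive, $E$ lacks. Since $T$ is a contraction the orbit $(T^nx)_n$ lies in the ball of radius $\|x\|$, whose image under the canonical embedding $E\hookrightarrow E''$ is weak-$*$ compact. So I would fix a free ultrafilter $\mathcal U$ on $\N$ and set $\eta:=\sigma(E'',E')\text{-}\lim_{\mathcal U}T^nx\in E''$, so that $\dprod{\eta}{x'}=\lim_{\mathcal U}\dprod{T^nx}{x'}$ for every $x'\in E'$.

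First I would transfer the hypothesis to $\eta$. Because $\abs{\dprod{T^nx}{x'}}\to\abs{\dprod{x}{x'}}$ is a genuine limit, the ultrafilter limit agrees with it, and continuity of $\abs{\cdot}$ gives $\abs{\dprod{\eta}{x'}}=\abs{\dprod{x}{x'}}$ for every $x'\in E'$. The crux is then a rigidity lemma: if $\eta,x\in E''$ satisfy $\abs{\dprod{\eta}{x'}}=\abs{\dprod{x}{x'}}$ for all $x'$ and $x\neq0$, then $\eta=\lambda x$ for some $\lambda\in\T$. I would prove this by polarization: expanding $\abs{\dprod{\eta}{x'+sy'}}^2=\abs{\dprod{x}{x'+sy'}}^2$ for the complex scalars $s\in\{1,i\}$ and cancelling the diagonal terms (which coincide by the modulus hypothesis) yields $\overline{\dprod{\eta}{x'}}\,\dprod{\eta}{y'}=\overline{\dprod{x}{x'}}\,\dprod{x}{y'}$ for all $x',y'$; fixing $x'_0$ with $\dprod{x}{x'_0}\neq0$ then exhibits $\dprod{\eta}{y'}$ as one fixed unimodular multiple of $\dprod{x}{y'}$, whence $\eta=\lambda x$ with $\lambda\in\T$.

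Finally I would feed the dynamics back in. Since $\dprod{\eta}{T'x'}=\lim_{\mathcal U}\dprod{T^nx}{T'x'}=\lim_{\mathcal U}\dprod{T^{n+1}x}{x'}$, the element $\sigma(E'',E')\text{-}\lim_{\mathcal U}T^{n+1}x$ is exactly $T''\eta=\lambda\,Tx$ (using $T''|_E=T$). On the other hand, the shifted sequence $(T^{n+1}x)_n$ still satisfies the same modulus hypothesis, so applying the rigidity lemma to the push-forward of $\mathcal U$ under $n\mapsto n+1$ (again a free ultrafilter, the map being injective) gives $\sigma(E'',E')\text{-}\lim_{\mathcal U}T^{n+1}x=\mu x$ for some $\mu\in\T$. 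Comparing the two expressions yields $\lambda\,Tx=\mu x$, hence $Tx=(\mu/\lambda)x$ with $\mu/\lambda\in\T$, which is the assertion.

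I expect the real content to lie in the first two steps: passing to $E''$ to recover compactness without reflexivity, and the rigidity lemma itself. It is precisely in the lemma that the \emph{modulus} hypothesis (as opposed to mere boundedness of the orbit) is exploited, the polarization identity being the device that upgrades equality of moduli to honest proportionality; note that this forces the scalar field to be $\C$, consistent with $\lambda\in\T$. By contrast the concluding dynamical step is routine once the lemma is available, the only point requiring care being to check that the shifted ultrafilter remains free so that the lemma applies verbatim to the orbit $(T^{n+1}x)_n$.
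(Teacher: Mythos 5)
Your argument is correct: the ultrafilter limit $\eta$ exists in the weak-$*$ compact unit ball of $E''$, the polarization identity does upgrade $|\dprod{\eta}{x'}|=|\dprod{x}{x'}|$ (for all $x'$) to $\eta=\lambda x$ with $\lambda\in\T$, and the shift comparison $T''\eta=\lambda Tx=\mu x$ then yields the claim. One phrasing slip: the image of the ball of $E$ under the canonical embedding is weak-$*$ compact only when $E$ is reflexive; what you actually use, correctly, is that the orbit sits inside the ball of $E''$, which is always weak-$*$ compact. The paper itself gives no proof of this theorem (it is quoted from Goldstein--Nagy \cite{G3}), but its Theorem \ref{thm:bddgapsextr} generalizes it, and specializing that proof to $k_n=n$, $d=1$ gives a much shorter route sharing your key rigidity lemma while skipping the bidual entirely: replacing $x'$ by $T'x'$ in the hypothesis gives $|\dprod{T^{n+1}x}{x'}|\to|\dprod{Tx}{x'}|$, while the shifted sequence also converges to $|\dprod{x}{x'}|$, so $|\dprod{Tx}{x'}|=|\dprod{x}{x'}|$ for every $x'\in E'$, and your polarization lemma applied directly to the pair $(Tx,x)$ finishes. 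In other words, the ultrafilter limit $\eta$ is an unnecessary middleman here: the shift trick you invoke at the very end already compares $Tx$ with $x$ without producing any cluster point of the orbit. Your heavier route is not wasted effort conceptually --- producing an actual limit object is what one needs for the subsequential versions where consecutive exponents do not differ by a fixed amount --- but for the statement at hand the two-line reduction to the rigidity lemma is all that is required.
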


\smallskip

 The aim of this paper is to systematically study the validity of Wiener's Lemma along subsequences and the corresponding consequences in the spirit of the above.  The following is an example of our results, see Theorem \ref{thm:wiener-pol-prim} below.
\begin{thm*}[Wiener's lemma along polynomials and primes]
Let $p_n$ be the $n^\text{th}$ prime, let $P$ be a non-constant
integer polynomial,  and for $n\in\N$ let $k_n:=P(n)$ or $k_n:=P(p_n)$. Then for 
every finite, positive Borel measure $\mu$ on $\T$ 
\begin{align*}
\limaveN |\Hat{\mu}(k_n)|^2&=\sum_{a/q\in\Q} c(e(a/q)) \sum_{\lambda
\text{\atom}} \mu(\{\lambda\}) \mu(\{\lambda e(a/q)\})\\&
\notag
\leq
\sum_{\lambda\in U} \mu(\{\lambda e(\Q)\})^2,
\end{align*}
where $U$ is a maximal set of rationally independent atoms of $\mu$, $e(x)=\ee^{2\pi i x}$ for $x\in \R$ and for $\lambda\in \T$
\[
c(\lambda)=\limaveN \lambda^{k_n}
\]
(the limit exists, see \cite{RW} and below).
\end{thm*}
As a consequence we immediately obtain that for a continuous measure $\mu$ one has   $|\Hat\mu(k_n)|\to 0$ as $n\to \infty$ in density, which is non-trivial in view of the fact that the sequences $(k_n)$  as above have density $0$ (except in the trivial case of linear $P$).

In our treatment of the generalizations of Wiener's Lemma,  Corollary \ref{cor:W-extremal} and Theorems \ref{abstwiener}, \ref{abstwienerextr} we will separate the  study of the equivalences (i)$\Leftrightarrow$(iii) and (ii)$\Leftrightarrow$(iii) in Corollary \ref{cor:W-extremal}. 

\medskip\noindent First of all, here are some words about terminology. The term \emph{complex measure} refers to $\C$-valued $\sigma$-additive set function (which is then  automatically finite valued, and has finite variation). In this paper only Borel measures will be considered.
A \emph{subsequence} $(k_n)$ in $\N$ will refer to a function
$k:\N\to \N$ which is strictly increasing for sufficiently large
indices. Banach and Hilbert spaces will be considered over the complex field $\C$.

Sequences for which Wiener's lemma and the extremality of Dirac measures work well include certain polynomial sequences, the primes, certain polynomials of primes, as has been said above, and certain return times sequences as will be shown below. 

As an application of the general results we shall prove among others
the following, maybe at first glance surprisingly looking, facts. Recall that $p_n$ denotes the $n^\text{th}$ prime.
\begin{enumerate}[1)]
\item The only  Borel probability measures on $\T$ with $|\Hat\mu(p_n)|\to 1$ for $n\to \infty$ are the Dirac measures (Theorem \ref{thm:pol-primes-extr}).
\item If $T$ is a (linear) contraction on a Hilbert space and $x\in H\setminus\{0\}$ is such that $|\sprod{T^{p_n}x}{x}|\to \|x\|^2$ as $n\to \infty$, then $x$ is an eigenvector of $T$ to a unimodular eigenvalue (Theorems  \ref{thm:pol-primes-extr} and \ref{thm:GNgeneral}).
\item If $T$ is a power bounded operator on a Banach space $E$ and $x\in E\setminus\{0\}$ is such that $|\dprod{ T^{p_n}x}{x'}|\to |\dprod{x}{x'}|$ as $n\to \infty$ for every $x'\in E'$, then $x$ is an eigenvector of $T$ to a unimodular eigenvalue (Corollary \ref{cor:primesgelfand}).
\end{enumerate}
We also relate our results to \emph{rigidity sequences} and discover a 
property of such sequences as a byproduct which appears to be new. As a consequence, we show that for a non-constant integer polynomial $P$ the sequence $(P(p_n))$ is not a rigidity sequence. We also discuss connection to \emph{strongly sweeping out sequences}, see the end of Section \ref{sec:extr}.

Our results are inspired by ergodic theory, where the study of ergodic theorems along subsequences has been a rich area of research with connections to harmonic analysis and number theory. Furstenberg \cite{F-book} 
described norm convergence of ergodic averages of unitary operators along polynomials.
Pointwise convergence of ergodic averages for measure preserving transformations along polynomials and primes, answering a question of Bellow and Furstenberg, was proved by Bourgain \cite{B86,B88,B} and Wierdl \cite{W}, with polynomials of primes treated by Wierdl \cite{W-diss} and Nair \cite{N,N1}.
To illustrate the wealth of literature on  ergodic theorems along subsequences  we refer, e.g., to  Bellow \cite{Be}, Bellow, Losert \cite{BeL},  Baxter, Olsen \cite{BO}, Rosenblatt, Wierdl \cite{RW}, 
Akcoglu, Bellow, Jones, Losert, Reinhold-Larsson, Wierdl \cite{Akcoglu-etal},
 Berend, Lin, Rosenblatt, Tempelman \cite{BLRT}, Boshernitzan, Kolesnik, Quas, Wierdl \cite{BKQW}, Krause \cite{K}, Zorin-Kranich \cite{ZK-primes}, Mirek \cite{M}, Eisner \cite{E}, Frantzikinakis, Host, Kra \cite{FHK}, Wooley, Ziegler \cite{WZ}.

The paper is organized as follows. Section \ref{sec:Wiener-subseq} is devoted  to an abstract version of Wiener's lemma along subsequences. In Section \ref{sec:extr}  we study extremal and Wiener extremal subsequences,  see Definition \ref{def:extr}. The case of polynomials, primes and polynomials of primes is treated in Section \ref{sec:pol-primes}.  Section \ref{sec:operators} is devoted to applications to orbits of operators on Hilbert and Banach spaces.
The continuous parameter case is discussed in Section \ref{sec:cont}, where parallels and differences to the time discrete case are pointed out.

\textbf{Acknowledgment.} The authors thank Michael Lin, Rainer Nagel and J\'anos Pintz for the helpful comments and references. We are also greatly indebted to the anonymous referees for making suggestions to improve the presentation of the paper and for bringing the strong sweeping out property to our attention, thus indicating a new research direction.


\section{Wiener's Lemma  along subsequences}\label{sec:Wiener-subseq}

Recall that a sequence $(a_n)$ in $\C$ is called \emph{convergent in density} to $a\in\C$, with notation $\Dlim_{n\to\infty}a_n=a$ if there exists a set $J\subseteq \N$ of density $1$ with $\lim_{n\to\infty,n\in J}a_n=a$. The density of a set $J\subseteq\N$ is defined by $\lim_{n\to\infty}\frac{|J\cap\{1,\ldots,n\}|}{n}$, provided the limit exists.

The following is the classical Koopman--von Neumann lemma together with a slight variation.
\begin{lemma}\label{lem:KvN}
\begin{abc}
\item For a bounded sequence $(a_n)$ in $[0,\infty)$ the following are equivalent:
\begin{iiv}
\item $\Dlim_{n\to\infty}a_n=0$.
\item $\limaveN a_n=0$.
                  \item $\limaveN a_n^2=0$.
\end{iiv}
\item For a bounded sequence $(b_n)$ in $(-\infty,1]$ the following are equivalent:
\begin{iiv}
\item $\Dlim_{n\to\infty}b_n= 1$.
\item $\limaveN b_n= 1$.
\end{iiv}
If $b_n\geq 0$, then these assertions are also equivalent to:
\begin{iiv}\setcounter{enumii}{2}
\item $\limaveN b_n^2= 1$.
\end{iiv}
\end{abc}
\end{lemma}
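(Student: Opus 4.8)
The plan is to reduce the entire statement to the single nontrivial equivalence in part (a), namely the classical Koopman--von Neumann equivalence (a)(i)$\Leftrightarrow$(a)(ii), and to treat everything else by elementary inequalities. For part (b) I would substitute $a_n := 1 - b_n$, which is nonnegative (since $b_n \leq 1$) and bounded; then $\Dlim_{n\to\infty} b_n = 1$ means exactly $\Dlim_{n\to\infty} a_n = 0$, and $\limaveN b_n = 1$ means $\limaveN a_n = 0$, so (b)(i)$\Leftrightarrow$(b)(ii) is precisely (a)(i)$\Leftrightarrow$(a)(ii) applied to $(a_n)$. The squared statements will follow from the same principle together with an algebraic identity, discussed below.

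For the heart of the matter, (a)(ii)$\Rightarrow$(a)(i), I would use a layer-cake decomposition. For each $k\in\N$ set $J_k := \{n\in\N : a_n \geq 1/k\}$. From $a_n \geq \tfrac1k \one_{J_k}(n)$ and (ii) one reads off that each $J_k$ has density $0$, and clearly $J_1 \subseteq J_2 \subseteq \cdots$. The main work is then a diagonal/exhaustion argument: choosing an increasing sequence $N_1 < N_2 < \cdots$ so that $\frac{1}{N}|J_k \cap \{1,\ldots,N\}| < 1/k$ for all $N \geq N_k$, I would set the exceptional set $E := \bigcup_k \big(J_k \cap [N_k, N_{k+1})\big)$ and take $J := \N \setminus E$. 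Using the nesting of the $J_k$ one checks that $E$ has density $0$ (so $J$ has density $1$) and that $a_n \to 0$ as $n\to\infty$ through $J$, because on the block $[N_k, N_{k+1})$ every surviving index satisfies $a_n < 1/k$. This construction is the only genuinely nontrivial step, and I expect it to be the main obstacle; the rest is bookkeeping.

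The remaining implications are routine. For (a)(i)$\Rightarrow$(a)(ii) I would split $\aveN a_n$ over $J$ and its complement: on $J$ the terms are eventually below any prescribed $\varepsilon$, while the complement has density $0$ and the terms are uniformly bounded, so both contributions vanish in the limit. For (a)(ii)$\Leftrightarrow$(a)(iii), Cauchy--Schwarz gives $\big(\aveN a_n\big)^2 \leq \aveN a_n^2$, hence (iii)$\Rightarrow$(ii), while boundedness $a_n \leq M$ yields $a_n^2 \leq M a_n$ and hence (ii)$\Rightarrow$(iii). Finally, for the last claim of (b), assuming $0 \leq b_n \leq 1$, the factorization $1 - b_n^2 = (1-b_n)(1+b_n)$ together with $1 \leq 1 + b_n \leq 2$ gives the two-sided estimate $(1-b_n) \leq (1 - b_n^2) \leq 2(1-b_n)$; averaging shows $\limaveN (1 - b_n^2) = 0$ if and only if $\limaveN (1 - b_n) = 0$, that is, (b)(iii)$\Leftrightarrow$(b)(ii).
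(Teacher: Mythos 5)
Your proposal is correct and follows the same overall route as the paper: the paper likewise reduces (b) to (a) via the substitution $a_n:=1-b_n$ and treats the squared versions as direct consequences of the boundedness of the sequences. The only difference is that the paper simply cites the Koopman--von Neumann lemma for the core equivalence (a)(i)$\Leftrightarrow$(a)(ii), whereas you supply its standard proof (the nested level sets $J_k$ of density zero and the block-diagonal exhaustion), which is exactly the classical argument and is carried out correctly.
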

\begin{proof}(a), (i)$\Leftrightarrow$(ii) is the content of the Koopman--von Neumann lemma, see, e.g., \cite{Koopman-vonNeumann1932} or \cite[Ch.{} 9]{EFHN}, whereas (i)$\Leftrightarrow$(iii) is a direct consequence. 
(b) follows from (a) by considering $a_n:=1-b_n$.
\end{proof}

We further recall the following notion from Rosenblatt, Wierdl \cite{RW}, see also \cite[Chapter{} 21]{EFHN}.
\begin{definition}
A subsequence $(k_n)$ of $\N$ is called \emph{good} if
\[
 \text{ for every }\lambda\in\T\text{ the limit}\quad \limaveN \lambda^{k_n}=: c(\lambda) \quad\text{exists}.
\]
Moreover,  $(k_n)$ is called an \emph{ergodic sequence} if $c=\one_{\{1\}}$, the characteristic function of $\{1\}$.
We call the set $\Lambda:=\{\lambda: c(\lambda)\neq 0\}$ the \emph{spectrum of the sequence} $(k_n)$ in analogy to, e.g., Lin, Olsen, Tempelman \cite{LOT}.
\end{definition}

\begin{remark}
Given a sequence $(k_n)$ which is not necessarily good, we shall still denote by $c(\lambda)$ the limit  $\limaveN \lambda^{k_n}$ whenever it exists. 
\end{remark}

By an application of the spectral theorem it follows that a sequence $(k_n)$ is good if and only if it is \emph{good for the mean ergodic theorem}, that is if for every measure preserving system $(X,\nu,T)$ and every $f\in \Ell^2(X,\nu)$ the averages
\[
\aveN T^{k_n}f
\]
converge in $\Ell^2(X,\nu)$, where $T$ denotes the \emph{Koopman operator} corresponding to the transformation $T$ defined by $f\mapsto f\circ T$. A good sequence is then ergodic if and only if the above limit always equals the orthogonal projection $P_{\Fix (T)}f$ onto the fixed space $\Fix (T)=\ker(1-T)$.

\begin{remark}\label{rem:c}
\begin{abc}
\item For each subsequence $(k_n)$ of $\N$, the
sequence $(\lambda^{k_n})$ is equidistributed in $\T$ for almost every
$\lambda\in\T$ implying that $c(\lambda)$ exists and is $0$ for Lebesgue almost every $\lambda\in \T$, see,
e.g., Kuipers, Niederreiter \cite[Theorem 1.4.1]{KN} (or Theorem 2.2 on
page 50 of \cite{RW}). 
The function $c:\T\to \C$ is Borel measurable (if it exists).
\item The function $c$ clearly satisfies $c(1)=1$,
$c(\ol{\lambda})=\ol{c(\lambda)}$ and $|c(\lambda)|\leq1$ whenever $c(\lambda)$ exists. 
Moreover, if $|c(\lambda)|=1$, then $\lambda^{k_n}$ converges to $c(\lambda)$ in density, which follows  by Lemma \ref{lem:KvN} (b) applied to $a_n:=\Re (\ol{c(\lambda)}\lambda^{k_n})$. Thus $c$ is a multiplicative function on the subgroup $\{\lambda:\, |c(\lambda)|=1\}$ of $\T$. The fact that 
$\{\lambda:\, |c(\lambda)|=1\}$ is a subgroup  of $\T$ is actually a particular case of a more general result of Akcoglu, Bellow, Jones, Losert,  Reinhold-Larsson and Wierdl, see  \cite[Lemma 3.12]{Akcoglu-etal}. 
\end{abc}
\end{remark}

We present one more property of the limit function $c$. For 
$d\in\N$ we set $G_d:=\{\lambda\in \T\, :\, \lambda^d=1\}$, the group of $d^\text{th}$ roots of unity.

\begin{prop}\label{prop:c=1}
Let $(k_n)$ be a good sequence with corresponding limit function $c$. Then there exists 
$d\in \N$ such that 
$\Gamma:=\{\lambda\in \T\, :\, |c(\lambda)|=1\}=G_d$.
\end{prop}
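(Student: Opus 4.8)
My plan is to combine the dichotomy for subgroups of the circle with the fact that $c$ is a pointwise limit of continuous functions. By Remark~\ref{rem:c}, $\Gamma=\{\lambda\in\T:|c(\lambda)|=1\}$ is a subgroup of $\T$ on which $|c|\equiv 1$. Since the only closed subgroups of $\T$ are the finite groups $G_d$ ($d\ge 1$) together with $\T=G_0$, the closure $\overline{\Gamma}$ is either finite or all of $\T$. In the first case $\Gamma\subseteq\overline{\Gamma}$ is finite and hence equals $G_d$ with $d=|\Gamma|$ (the finite subgroups of $\T$ are exactly the groups of roots of unity), while in the second case $\Gamma$ is dense. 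Thus everything reduces to ruling out that $\Gamma$ is dense. (Note $1\in\Gamma$, so $\Gamma\ne\emptyset$ and the resulting $d$ is at least $1$; the value $d=0$ in the statement is never attained.)

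The heart of the matter is that $c$ is of Baire class $1$: it is the everywhere-defined pointwise limit of the continuous functions $\lambda\mapsto\aveN\lambda^{k_n}$ (polynomials in $\lambda$ and $\overline{\lambda}$), the limit existing precisely because $(k_n)$ is good. Hence, $\T$ being a compact metric space, the set of continuity points of $c$ is a dense $G_\delta$, in particular nonempty; I would fix such a continuity point $\mu$ and derive a contradiction from density of $\Gamma$. On one hand, by Remark~\ref{rem:c} one has $c=0$ Lebesgue-almost everywhere, so $\{c=0\}$ is dense; choosing $\nu_i\to\mu$ with $c(\nu_i)=0$ and using continuity at $\mu$ forces $c(\mu)=0$. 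On the other hand, if $\Gamma$ were dense I could choose $\gamma_i\to\mu$ with $\gamma_i\in\Gamma$, and continuity at $\mu$ together with $|c(\gamma_i)|=1$ would force $|c(\mu)|=1$. These two conclusions are incompatible, so $\Gamma$ cannot be dense and the proof is finished.

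The only real subtlety I anticipate is the recognition that no quantitative harmonic analysis is needed. One might fear having to exclude ``profinitely rigid'' sequences by hand — this is, for example, the mechanism behind the failure of $(n!)$ to be good, where a mixed-radix construction produces an irrational $\theta$ with divergent averages $\aveN\rme^{2\pi\rmi n!\theta}$ — but the soft combination of Baire's theorem on continuity points of Baire-class-$1$ functions with the almost-everywhere vanishing of $c$ bypasses this difficulty completely. The remaining ingredients — that $c$ is Baire class $1$, the classification of closed subgroups of $\T$, and that a dense set on which $|c|=1$ meets every neighborhood of a continuity point — are routine.
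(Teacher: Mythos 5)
Your proof is correct and follows essentially the same route as the paper: the finite-or-dense dichotomy for the subgroup $\Gamma$, the Baire class $1$ property of $c$ giving a dense set of continuity points, and the almost-everywhere vanishing of $c$ to rule out density. The only difference is that you spell out the contradiction at a single continuity point, whereas the paper phrases it as ``$c$ would be nowhere continuous''; the content is identical.
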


\begin{proof} 
 It follows from Remark \ref{rem:c} that $\Gamma$ 
is a group, and it is then  well-known that  $\Gamma$ is either finite or dense in $\T$. We shall prove that it is finite. 

\medskip\noindent Since $(k_n)$ is good, $c$ is the pointwise limit of a sequence of continuous 
functions on a compact space. Hence, by a theorem of Baire, its set of continuity points is dense in $\T$. 
As mentioned in Remark \ref{rem:c},  
 $\lim_{N\to \infty}\frac1N\sum_{n=1}^N \lambda ^{k_n}=
0$ for almost every $\lambda$ (with respect to the Haar measure on $\T$).
If $\Gamma$ is not finite, we infer that $c$ is nowhere continuous, which is impossible.
\end{proof}

The following
general fact may appear to be well known, but we could not find a reference.

\begin{prop}[Wiener's lemma along subsequences]\label{prop:W-seq}
Let $(k_n)$ be a good sequence in $\N$.
\begin{abc}
\item For every complex Borel measure
$\mu$ on $\T$
\[
\limaveN |\Hat{\mu}(k_n)|^2=\int_{\T^2} c(\lambda_1 \ol{\lambda_2})
\dd(\mu\times\overline{\mu})(\lambda_1,\lambda_2).
\]
\item The sequence $(k_n)$ is ergodic if and only if
\[
\limaveN |\Hat{\mu}(k_n)|^2=\sum_{a
\atom} |\mu(\{a\})|^2
\]
holds for every complex Borel measure $\mu$ on $\T$.
\item For an ergodic sequence $(k_n)$ and a Borel probability measure $\mu$ on $\T$ the limit above in {\upshape(b)} is $1$ if and only if $\mu$ is a Dirac measure.
\end{abc}
\end{prop}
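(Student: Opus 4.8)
The plan is to prove part (a) by a direct computation and then deduce (b) and (c) from it. For part (a), I would first expand the squared modulus as a double integral: writing $\Hat\mu(k_n)=\int_\T\lambda_1^{k_n}\dd\mu(\lambda_1)$ and conjugating gives
\[
|\Hat\mu(k_n)|^2=\int_{\T^2}(\lambda_1\ol{\lambda_2})^{k_n}\dd(\mu\times\ol\mu)(\lambda_1,\lambda_2).
\]
I would then average over $n\le N$, interchange the finite sum with the integral (legitimate by Fubini since $\mu\times\ol\mu$ is a finite measure), and let $N\to\infty$. Because $|\lambda_1\ol{\lambda_2}|=1$, the inner averages $\aveN(\lambda_1\ol{\lambda_2})^{k_n}$ are bounded by $1$ in modulus and converge pointwise to $c(\lambda_1\ol{\lambda_2})$ by goodness of $(k_n)$; dominated convergence against the finite measure $|\mu|\times|\mu|$ then yields the stated formula. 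This computation is the crux of the proposition.

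For the forward implication in (b), I specialize (a) to the ergodic case $c=\one_{\{1\}}$. Then the integrand is the indicator of the diagonal $\Delta=\{\lambda_1=\lambda_2\}$, and a Fubini computation gives
\[
(\mu\times\ol\mu)(\Delta)=\int_\T\ol{\mu(\{\lambda\})}\dd\mu(\lambda)=\sum_{a\atom}|\mu(\{a\})|^2,
\]
where the last equality uses that $\lambda\mapsto\mu(\{\lambda\})$ is supported on the (countable) set of atoms. For the converse I set $g:=c-\one_{\{1\}}$; by hypothesis together with (a), $\int_{\T^2}g(\lambda_1\ol{\lambda_2})\dd(\mu\times\ol\mu)=0$ for every complex Borel measure $\mu$. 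Testing against two-point measures $\mu=\alpha\delta_a+\beta\delta_b$ with $a,b\in\T$ and $\alpha,\beta\in\C$, and using $g(1)=0$ as well as $g(\ol\lambda)=\ol{g(\lambda)}$ (which follows from $c(\ol\lambda)=\ol{c(\lambda)}$ in Remark \ref{rem:c}), the integral collapses to $2\Re\bigl(\alpha\ol\beta\,g(a\ol b)\bigr)$. Since $\alpha,\beta$ are arbitrary this forces $g(a\ol b)=0$, and as $a\ol b$ ranges over all of $\T$ we conclude $c=\one_{\{1\}}$, i.e.\ $(k_n)$ is ergodic.

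Part (c) is then immediate from (b): for ergodic $(k_n)$ the limit equals $\sum_{a\atom}\mu(\{a\})^2$, and for a probability measure one has $\sum_a\mu(\{a\})^2\le\sum_a\mu(\{a\})\le 1$, with equality throughout exactly when $\mu$ is a Dirac measure, reproducing the inequality argument of Corollary \ref{cor:W-extremal}.

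I expect the only genuinely delicate points to be the justification of the limit–integral interchange in (a) and the identification of the diagonal mass $(\mu\times\ol\mu)(\Delta)$ with $\sum_{a\atom}|\mu(\{a\})|^2$; once part (a) is in place, the converse of (b) is a short computation and (c) is essentially a restatement of the known extremality argument.
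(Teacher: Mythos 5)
Your proposal is correct and follows essentially the same route as the paper: part (a) by expanding $|\Hat\mu(k_n)|^2$ as a double integral and applying Fubini plus dominated convergence, the forward direction of (b) by integrating the diagonal indicator, the converse of (b) by testing against two-point measures, and (c) by the extremality argument of Corollary \ref{cor:W-extremal}. The only cosmetic difference is that in the converse of (b) you use a general two-point complex measure $\alpha\delta_a+\beta\delta_b$ to force $g(a\ol b)=0$ in one stroke, whereas the paper splits into the two specific choices $\tfrac12(\delta_1+\delta_\lambda)$ and $\tfrac12(\delta_1+i\delta_\lambda)$ to handle $\Re c(\lambda)\neq0$ and $\Im c(\lambda)\neq0$ separately.
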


\begin{proof}
(a)
The proof goes along the same lines as the most elementary and well-known proof of the Wiener lemma. Observe that,
by Fubini's theorem and by Lebesgue's dominated convergence theorem,
\begin{align*}
\aveN  |\Hat{\mu}(k_n)|^2&=
\aveN \int_\T \lambda_1^{k_n} \dd\mu(\lambda_1)\int_\T
\ol{\lambda_2}^{k_n} \dd\ol{\mu}(\lambda_2)\\
&=\int_{\T\times\T} \aveN (\lambda_1\ol{\lambda_2})^{k_n}
\dd(\mu\times\ol{\mu})(\lambda_1,\lambda_2)\\
&\to \int_{\T^2} c(\lambda_1 \ol{\lambda_2})
\dd(\mu\times\ol{\mu})\quad \text{  as } N\to\infty.
\end{align*}

\medskip\noindent (b)
If now $c=\one_{\{1\}}$ we see that, by Fubini' theorem, the limit above equals
\[
\int_\T \left(\int_\T
\one_{\{\lambda_1\}}(\lambda_2)\dd\mu(\lambda_1)\right)\dd\ol{\mu}(\lambda_2)
=\int_\T \mu(\{\lambda_2\})\dd\ol{\mu}(\lambda_2)=\sum_{a \atom} |\mu(\{a\})|^2.
\]
For the converse implication suppose $(k_n)$ is not ergodic, and let $\lambda\in \T\setminus\{1\}$ be with $c(\lambda)\neq 0$. If $\Re c(\lambda)\neq 0$, then consider the probability measure $\mu:=\frac12(\delta_1+\delta_\lambda)$. We then have
\[
\int_{\T^2} c(\lambda_1 \ol{\lambda_2})
\dd(\mu\times\overline{\mu})(\lambda_1,\lambda_2)=\frac12+\frac14\Bigl(c(\lambda)+c(\ol{\lambda})\Bigr)\neq \frac12=\sum_{a
\atom} |\mu(\{a\})|^2.
\]
If $\Im c(\lambda)\neq 0$, then for the measure $\mu:=\frac12(\delta_1+i\delta_\lambda)$ we have \[
\int_{\T^2} c(\lambda_1 \ol{\lambda_2})
\dd(\mu\times\overline{\mu})(\lambda_1,\lambda_2)=\frac12+\frac i4\Bigl(c(\lambda)-c(\ol{\lambda})\Bigr)\neq \frac12=\sum_{a
\atom} |\mu(\{a\})|^2.\] The proof of (b) is complete.

\medskip\noindent (c) follows from (b) by a similar arguments as in the proof of Corollary \ref{cor:W-extremal}.
\end{proof}

The following questions arise naturally, cf.{} also Proposition \ref{prop:rigid} below.
\begin{question}
Does the existence of $\limaveN |\Hat{\mu}(k_n)|^2$ for every  probability Borel measure $\mu$ imply that $(k_n)$ is good?
Is there a non-ergodic, good sequence $(k_n)$ with $\Re c(\lambda)=0$ for each $\lambda\in \T\setminus\{1\}$?
\end{question}

\begin{remark} If one replaces ``probability measure''  by ``complex measure'', the answer to the first question is positive. Indeed, it is easy to see that for a subsequence  $(k_n)$ of $\N$ the following assertions are equivalent.
\begin{iiv}
\item  For every finite complex  measure (respectively every probability measure)
$\mu$ on $\T$ the limit $\lim_{N\to \infty} \frac1N \sum_{n=1}^N|\hat \mu (k_n)|^2$ exists;
\item  For every complex measure (respectively every 
probability measure)
$\mu$ and $\nu$ on $\T$ the limit $\lim_{N\to \infty} \frac1N 
\sum_{n=1}^N \Re (\hat \mu (k_n)\overline{\hat \nu (k_n)})$ exists.
\end{iiv}
Assume that the above equivalent conditions hold for probabilities. 
Taking $\nu:=\delta_1$ and $\mu:=\delta_\lambda$, we see that 
the limit $\lim_{N\to \infty} \frac1N 
\sum_{n=1}^N \Re (\lambda ^{k_n})$ exists.
If we assume moreover that the conditions hold for finite complex 
measures, then, taking $\nu:=i\delta_1$ and $\mu:=\delta_\lambda$, 
we see that 
the limit $\lim_{N\to \infty} \frac1N 
\sum_{n=1}^N \Im (\lambda ^{k_n})$ exists,  implying that $(k_n)$ is good. 
\end{remark}

\begin{cor}\label{cor:coset}
Let $(k_n)$ be a good sequence. 
For a Borel probability measure $\mu$
\begin{equation}\label{eq:extr-seq}
\limsup_{N\to \infty}\frac1N\sum_{n=1}^N |\hat \mu(k_n)|^2
=1
\end{equation}
holds if and only if $\mu$ is discrete with
\begin{equation}\label{eq:atoms}
c(a\ol{b})=1\quad  \text{for all atoms }a,b.
\end{equation}
In this case, the limit superior is a limit, and $\mu$ is supported in a coset of  $G_d$ for some $d\in \N$.
\end{cor}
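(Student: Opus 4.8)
The plan is to feed $\mu$ straight into the limit formula of Proposition \ref{prop:W-seq}(a). Since $(k_n)$ is good, that limit exists for every measure, so the $\limsup$ in \eqref{eq:extr-seq} is automatically a genuine limit; for a probability measure $\mu$ (so that $\ol\mu=\mu$) it equals
\[
I:=\int_{\T^2} c(\lambda_1\ol{\lambda_2})\,\dd(\mu\times\mu)(\lambda_1,\lambda_2).
\]
Everything thus reduces to deciding when $I=1$. First I would record the elementary estimate: as $|c|\le 1$ and $\mu\times\mu$ is a probability measure on $\T^2$,
\[
\Re I=\int_{\T^2}\Re c(\lambda_1\ol{\lambda_2})\,\dd(\mu\times\mu)\le\int_{\T^2}1\,\dd(\mu\times\mu)=1,
\]
and equality forces $\Re c(\lambda_1\ol{\lambda_2})=1$, hence (since $|c|\le 1$, and $\Re z=1$ with $|z|\le 1$ gives $z=1$) $c(\lambda_1\ol{\lambda_2})=1$ for $(\mu\times\mu)$-a.e.\ pair. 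The converse is immediate. So $I=1$ if and only if $c(\lambda_1\ol{\lambda_2})=1$ for $(\mu\times\mu)$-almost every $(\lambda_1,\lambda_2)$.

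The heart of the argument is to convert this almost-everywhere statement into a structural one, and here I would call on Proposition \ref{prop:c=1} to write $\Gamma=\{\,|c|=1\,\}=G_d$, with $d\ge 1$ since $\Gamma$ is finite. The relation $c(\lambda_1\ol{\lambda_2})=1$ in particular yields $|c(\lambda_1\ol{\lambda_2})|=1$, i.e.\ $\lambda_1\ol{\lambda_2}\in G_d$, i.e.\ $\lambda_1^d=\lambda_2^d$, so $\mu\times\mu$ is concentrated on the closed set $\{\lambda_1^d=\lambda_2^d\}$. Writing $\phi(\lambda):=\lambda^d$ and $\nu:=\phi_*\mu$, a Fubini computation gives
\[
(\mu\times\mu)\bigl(\{\lambda_1^d=\lambda_2^d\}\bigr)=\int_\T\nu(\{\phi(\lambda)\})\,\dd\mu(\lambda)=\sum_{w\atom}\nu(\{w\})^2,
\]
and the right-hand side equals $1$ precisely when $\nu=\delta_{w_0}$ is a Dirac measure, by the same convexity argument ($\sum p_i^2\le\sum p_i\le 1$ for the atom masses) used in Corollary \ref{cor:W-extremal}. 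Consequently $\mu$ is concentrated on $\phi^{-1}(w_0)=a_0G_d$, a single coset of the finite group $G_d$; in particular $\mu$ is discrete and supported in a coset of $G_d$, which is the last assertion of the corollary.

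It then remains only to identify the almost-everywhere condition with \eqref{eq:atoms}. Once $\mu$ is discrete, $\mu\times\mu$ is supported on the countable set of pairs of atoms, so ``$c(\lambda_1\ol{\lambda_2})=1$ $(\mu\times\mu)$-a.e.'' is literally the same as $c(a\ol b)=1$ for all atoms $a,b$; this completes the forward implication. For the converse I would substitute a discrete $\mu$ obeying \eqref{eq:atoms} into the formula for $I$ and compute $I=\sum_{a,b}\mu(\{a\})\mu(\{b\})\,c(a\ol b)=\bigl(\sum_a\mu(\{a\})\bigr)^2=1$, using $\sum_a\mu(\{a\})=1$. The main obstacle is the passage in the third paragraph from the measure-theoretic identity to an honest restriction of the support: it is exactly the finiteness of $\Gamma=G_d$ from Proposition \ref{prop:c=1} that makes the fibers of $\phi$ finite, so that the a.e.\ relation $\lambda_1^d=\lambda_2^d$ forces $\mu$ onto one fiber and thereby produces a discrete measure; without that finiteness one could only extract a weaker, possibly non-atomic, conclusion. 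Everything else is routine bookkeeping with Fubini's theorem and the convexity inequality.
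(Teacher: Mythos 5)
Your proof is correct and follows essentially the same route as the paper: both reduce \eqref{eq:extr-seq} via Proposition \ref{prop:W-seq}(a) to the condition $c(\lambda_1\ol{\lambda_2})=1$ for $(\mu\times\mu)$-a.e.\ pair (using $1-\Re c\geq 0$ and $|c|\leq 1$), and both invoke Proposition \ref{prop:c=1} to place $\mu$ on a single coset of the finite group $G_d$, after which the a.e.\ condition becomes \eqref{eq:atoms} and the converse is the same direct computation. The only difference is in one step: where you push $\mu$ forward under $\lambda\mapsto\lambda^d$ and run the $\sum p_i^2=1$ convexity argument, the paper simply applies Fubini to find a single $\lambda_2$ with $c(\lambda_1\ol{\lambda_2})=1$ for $\mu$-a.e.\ $\lambda_1$, which puts $\mu$ on $\lambda_2\Gamma=\lambda_2 G_d$ directly.
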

\begin{proof} Suppose \eqref{eq:extr-seq} holds. 
Since $(k_n)$ is good, by  Proposition \ref{prop:W-seq} (a)  the above limit superior is actually a limit and 
\[
\frac1N\sum_{n=1}^N |\hat \mu(k_n)|^2 \underset{N\to\infty}\longrightarrow \int_{\T\times \T} 
c(\lambda_1\bar \lambda_2) \dd\mu(\lambda_1)\dd\mu(\lambda_2)=1.
\]
Hence
\[
\int_{\T\times \T} \big(1-\Re(c(\lambda_1\bar \lambda_2))\big) \dd\mu(\lambda_1)\dd\mu(\lambda_2)=0 .
\]
Since $1- \Re(c(\lambda_1\bar \lambda_2))\geq 0$ (and $|c|\le 1$). We infer that 
there exists $\lambda_2\in \T$ such that for $\mu$-a.e. $\lambda_1\in \T$, 
$c(\lambda_1\bar \lambda_2)=1$. Hence, $\mu$ is supported on 
$\lambda_2\Gamma$, which equals $\lambda_2 G_d$ for some integer $d\geq0$ by Proposition \ref{prop:c=1}. This shows one implication. 

\medskip\noindent For the converse implication let $\mu$ be discrete satisfying \eqref{eq:atoms}. Then by Proposition \ref{prop:W-seq} (a) 
\[
\frac1N\sum_{n=1}^N |\hat \mu(k_n)|^2 =\sum_{a,b\atom} c(a\ol{b})\mu(\{a\})\mu(\{b\})=\sum_{a,b\atom}\mu(\{a\})\mu(\{b\})=1.
\]
 \end{proof}

We now consider the case ``in between'', namely when $c(\lambda)=0$
for all  but at most countably many $\lambda$'s. We first introduce
the following terminology: For a subset $\Lambda$ of $\T$ denote by $\lrLambda$ the subgroup generated $\Lambda$. We call two elements 
$\lambda_1,\lambda_2\in\T$  \emph{$\Lambda$-dependent} if their cosets with respect to $\lrLambda$ coincide: $\lambda_1\lrLambda=\lambda_2\lrLambda$, otherwise we call them \emph{$\Lambda$-independent}.

\begin{thm}\label{thm:count}
Let $(k_n)$ be a good sequence with at most countable spectrum $\Lambda$.
\begin{abc}
\item For every complex Borel measure $\mu$ on $\T$
\[
\limaveN |\Hat{\mu}(k_n)|^2= \sum_{\lambda\in\Lambda}
c(\lambda)\sum_{a\atom}\ol{\mu}(\{a\ol{\lambda}\}) \mu(\{a\}).
\]
In particular,  for every continuous complex Borel measure $\mu$ on $\T$
\[
\limaveN |\Hat{\mu}(k_n)|^2= 0.
\]
\item
For every  Borel probability measure $\mu$ on $\T$
\begin{equation}\label{eq:wiener-subgroup}
\limaveN |\Hat{\mu}(k_n)|^2\leq \sum_{a\in U} \mu(a\lrLambda)^2,
\end{equation}
where $U$ is a maximal set of $\Lambda$-independent atoms. The equality in \eqref{eq:wiener-subgroup} holds if and only if
$\mu$ satisfies \eqref{eq:atoms} (but it may not necessarily be discrete).
\end{abc}
\end{thm}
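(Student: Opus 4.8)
The plan is to start from the master formula of Proposition \ref{prop:W-seq} (a),
\[
\limaveN \abs{\Hat\mu(k_n)}^2=\int_{\T^2}c(\lambda_1\ol{\lambda_2})\,\dd(\mu\times\ol\mu)(\lambda_1,\lambda_2),
\]
and to exploit that $c$ is supported on the at most countable spectrum $\Lambda$. First I would compute the inner integral in $\lambda_1$ for fixed $\lambda_2$: the integrand $\lambda_1\mapsto c(\lambda_1\ol{\lambda_2})$ vanishes off the countable set $\lambda_2\Lambda$, so it is seen only by the atoms of $\mu$ lying there, giving $\int_\T c(\lambda_1\ol{\lambda_2})\,\dd\mu(\lambda_1)=\sum_{\lambda\in\Lambda}c(\lambda)\,\mu(\{\lambda\lambda_2\})$. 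Since $\abs{c}\le1$ (Remark \ref{rem:c}) and $\sum_{\lambda}\mu(\{\lambda\lambda_2\})$ is bounded by the total variation of $\mu$, this series is dominated uniformly in $\lambda_2$, so I may integrate it termwise against $\ol\mu$. For each fixed $\lambda$ the function $\lambda_2\mapsto\mu(\{\lambda\lambda_2\})$ is again supported on a countable set, and the substitution $\lambda_2=a\ol\lambda$ yields $\int_\T\mu(\{\lambda\lambda_2\})\,\dd\ol\mu(\lambda_2)=\sum_{a\atom}\ol\mu(\{a\ol\lambda\})\,\mu(\{a\})$. Collecting these steps gives the formula in (a); the ``in particular'' claim is then immediate, since a continuous measure has no atoms and every summand vanishes.

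For (b) I would first rewrite the formula of (a) in the symmetric form obtained by the change of index $b=a\ol\lambda$ (so that $\lambda=a\ol b$, with $c(a\ol b)=0$ unless $a\ol b\in\Lambda$). Using that $\mu$ is a probability measure, so $\ol\mu=\mu$, this reads
\[
\limaveN \abs{\Hat\mu(k_n)}^2=\sum_{a,b\atom}c(a\ol b)\,\mu(\{a\})\mu(\{b\}),
\]
which is legitimate because only the atomic part of $\mu$ enters. The left-hand side is real, so the sum equals $\sum_{a,b\atom}\Re c(a\ol b)\,\mu(\{a\})\mu(\{b\})$. Two observations drive the estimate: a nonzero coefficient forces $a\ol b\in\Lambda\subseteq\lrLambda$, i.e.\ the atoms $a,b$ are $\Lambda$-dependent and lie in a common coset $a\lrLambda$; and $\Re c(a\ol b)\le\abs{c(a\ol b)}\le1$. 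Bounding every surviving term by $\mu(\{a\})\mu(\{b\})$ and grouping the atoms coset by coset, I obtain
\[
\limaveN \abs{\Hat\mu(k_n)}^2\le\sum_{a,b\atom,\ a\lrLambda=b\lrLambda}\mu(\{a\})\mu(\{b\})=\sum_{a\in U}\mu(a\lrLambda)^2,
\]
the last equality because each coset $a\lrLambda$ is countable, so $\mu(a\lrLambda)$ equals the total mass of its atoms. This is \eqref{eq:wiener-subgroup}.

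Finally I would read off the equality conditions by demanding that each inequality be sharp. Sharpness requires $\Re c(a\ol b)=1$, and hence $c(a\ol b)=1$, for every pair of atoms $a,b$ of positive mass that can contribute---that is, for every pair of $\Lambda$-dependent atoms carrying positive mass. Conversely, condition \eqref{eq:atoms} forces every surviving coefficient to equal $1$ (and in fact confines all atoms to a single coset), collapsing the symmetric formula to $\sum_{a\in U}\mu(a\lrLambda)^2$; thus \eqref{eq:atoms} is sufficient for equality, while the continuous part of $\mu$ never enters, which accounts for the parenthetical ``not necessarily discrete''. I expect the equality analysis to be the main obstacle: one must track which atom-pairs actually occur (only the $\Lambda$-dependent ones), separate the two independent sources of slack---namely $\Re c<1$ on a contributing pair, and a same-coset pair with $a\ol b\in\lrLambda\setminus\Lambda$ of positive mass---and then reconcile the resulting within-coset condition with the stated formulation \eqref{eq:atoms}. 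The regrouping identity $\sum_{\lambda\in\lrLambda}\sum_{a\atom}\mu(\{a\ol\lambda\})\mu(\{a\})=\sum_{a\in U}\mu(a\lrLambda)^2$ that underlies the coset-wise bound is the technical core of the estimate.
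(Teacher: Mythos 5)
Your treatment of (a) is the paper's own argument with the two integrations performed in the opposite order (the paper integrates out $\lambda_2$ first), and your proof of the inequality in (b) is a mild repackaging of the paper's: where the paper runs a chain of triangle inequalities over $\lambda\in\Lambda$ starting from the formula in (a), you pass to the symmetric double sum $\sum_{a,b\atom}c(a\ol b)\mu(\{a\})\mu(\{b\})$, take real parts, and group by $\lrLambda$-cosets. Both are correct, and yours is in fact tidier: since $\lrLambda$ is a countable group, each coset is countable and carries only atomic mass, so your identity $\sum_{a,b\atom,\ a\lrLambda=b\lrLambda}\mu(\{a\})\mu(\{b\})=\sum_{a\in U}\mu(a\lrLambda)^2$ is exact, whereas the paper's intermediate steps $\sum_{\lambda\in\Lambda}\mu(\{a\ol\lambda\})=\mu(a\lrLambda)$ are, strictly speaking, only inequalities in general (harmless for the direction needed).

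On the equality assertion: the paper gives no argument (``the last assertion regarding the equality is clear''), and your hesitation about reconciling your condition with \eqref{eq:atoms} is justified. What your computation actually characterizes is: equality in \eqref{eq:wiener-subgroup} holds if and only if $c(a\ol b)=1$ for every pair of \emph{$\Lambda$-dependent} atoms $a,b$. Condition \eqref{eq:atoms} as literally written demands this for \emph{all} pairs of atoms and is strictly stronger; the ``only if'' half of the theorem's last sentence fails as stated. Concretely, for $k_n=2n$ one has $\Lambda=\lrLambda=\{1,-1\}$, and $\mu=\tfrac12(\delta_1+\delta_i)$ gives $\limaveN|\Hat\mu(k_n)|^2=\tfrac12=\mu(\{1,-1\})^2+\mu(\{i,-i\})^2$, so equality holds in \eqref{eq:wiener-subgroup} even though $c(1\cdot\ol{i})=c(-i)=0\neq1$. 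So do not try to force your within-coset condition into the form \eqref{eq:atoms}: your condition is the correct one, and \eqref{eq:atoms} is equivalent to it only under the additional hypothesis that all atoms lie in a single $\lrLambda$-coset (the situation of Corollary \ref{cor:coset}, from which \eqref{eq:atoms} is imported). Apart from this point, which is a defect of the statement rather than of your argument, the proposal is sound.
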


\begin{proof}
(a)
By Proposition \ref{prop:W-seq} and Fubini's theorem we have
\begin{align*}
\limaveN |\Hat{\mu}(k_n)|^2&=\int_{\T^2} c(\lambda_1 \ol{\lambda_2})
\dd(\mu\times\ol{\mu})(\lambda_1,\lambda_2)\\
&=\int_\T \sum_{\lambda\in\Lambda} c(\lambda) \ol{\mu}(\{\lambda_1
\ol{\lambda}\}) \dd\mu(\lambda_1)\\
&=\sum_{\lambda\in\Lambda} c(\lambda)\int_\T  \ol{\mu}(\{\lambda_1
\ol{\lambda}\}) \dd\mu(\lambda_1)\\
&= \sum_{\lambda\in\Lambda}  c(\lambda) \sum_{a \atom}\ol{\mu}(\{a\ol{\lambda}\}) \mu(\{a\}).
\end{align*}

\medskip\noindent
(b) Observe (the left-hand side  below is greater or equal to
zero by (a))
\begin{align*}
\sum_{\lambda\in\Lambda}  c(\lambda) \sum_{a \atom} \mu(\{a\})
\mu(\{a \ol{\lambda}\})
&\leq \sum_{a \atom} \mu(\{a\})
\left|\sum_{\lambda\in\Lambda} c(\lambda)  \mu(\{a
\ol{\lambda}\})\right|\\
&\leq \sum_{a \atom} \mu(\{a\}) \sum_{\lambda\in\Lambda}
\mu(\{a \ol{\lambda}\})\\
&=\sum_{a \atom} \mu(\{a\}) \mu(a\lrLambda) \\
&= \sum_{a\in U} \mu(a\lrLambda)^2.
\end{align*}
The last assertion regarding the equality is clear.
\end{proof}

We will see below that there are sequences $(k_n)$ satisfying the
assumptions of Theorem \ref{thm:count} and probability measures
satisfying \eqref{eq:extr-seq} which are  not Dirac.

\begin{remark}
Let $(k_n)$ be a strictly increasing sequence in $\N$ having positive density. 
If the characteristic function of $\{k_n:n\in \N\}$ is a Hartmann sequence (i.e., has Fourier coefficients), then $(k_n)$ is good with at most countable spectrum (see, e.g., Lin, Olsen, Tempelman \cite{LOT} or Kahane \cite{Kah}).
\end{remark}

By using a result\footnote{We thank Michael Lin for bringing the reference   \cite{Rosenblatt} to our attention.} of Boshernitzan, 
which we recall
 for the sake of completeness, it is possible to show that good sequences with positive upper density have countable spectrum.
Actually,   his Theorem 41  in Rosenblatt \cite{Rosenblatt} is stated in the case where {$k_n=n$ for every $n\in \N$},  but the proof is the same. 
\begin{prop}[Boshernitzan, see Rosenblatt \cite{Rosenblatt}]\label{prop-BR}
Let $(a_n)$ be a bounded sequence of complex numbers and let $(N_m)$ be a subsequence of $\N$. For every $\delta>0$, 
the set 
\[
\Bigl\{\lambda \in \T\, :\, \liminf_{k\to \infty}\frac1{N_m} 
 \Bigl|\sum_{\ell=1}^{N_m}a_\ell \lambda ^\ell\Bigl|\geq \delta\Bigr\}\quad\text{is finite.}
 \]
\end{prop}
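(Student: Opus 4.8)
The plan is to establish the finiteness of the set
\[
S_\delta:=\Bigl\{\lambda \in \T\, :\, \liminf_{m\to \infty}\frac1{N_m}
 \Bigl|\sum_{\ell=1}^{N_m}a_\ell \lambda ^\ell\Bigr|\geq \delta\Bigr\}
\]
by a standard orthogonality-plus-counting argument. Suppose, for contradiction, that $S_\delta$ contains $M$ distinct points $\lambda_1,\ldots,\lambda_M\in \T$ with $M$ arbitrarily large. For each such $\lambda_j$ and each $m$ write $F_m(\lambda):=\frac1{N_m}\sum_{\ell=1}^{N_m}a_\ell\lambda^\ell$. The key identity I would use is the orthogonality of the characters $\ell\mapsto\lambda^\ell$ under the Cesàro average: for $\lambda_i\overline{\lambda_j}\neq 1$,
\[
\frac1{N_m}\sum_{\ell=1}^{N_m}(\lambda_i\overline{\lambda_j})^\ell\longrightarrow 0\qquad(m\to\infty),
\]
while the diagonal terms $i=j$ contribute $1$. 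This is exactly the computation underlying Wiener's lemma, so the spirit of the proof is the same as that of Proposition \ref{prop:W-seq}(a).

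The concrete mechanism is to bound the averaged square sum $\frac1{N_m}\sum_{\ell=1}^{N_m}\bigl|\sum_{j=1}^M\overline{\lambda_j}^{\,\ell}\bigr|^2$ from below using the lower bounds $|F_m(\lambda_j)|\gtrsim\delta$ and from above using boundedness of $(a_\ell)$. More precisely, consider the quantity
\[
\frac1{N_m}\sum_{\ell=1}^{N_m} a_\ell\,\overline{g(\lambda^\ell)},\qquad g(\lambda^\ell):=\sum_{j=1}^M c_j\lambda_j^{\,\ell},
\]
for suitable coefficients $c_j$; by Cauchy--Schwarz its modulus is at most $\bigl(\frac1{N_m}\sum|a_\ell|^2\bigr)^{1/2}\bigl(\frac1{N_m}\sum|g(\lambda^\ell)|^2\bigr)^{1/2}$, and the second factor tends to $\bigl(\sum_j|c_j|^2\bigr)^{1/2}$ by orthogonality. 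On the other hand, choosing $c_j$ to align the phases of $F_m(\lambda_j)$ forces the left-hand side to stay at least of order $\delta\sum_j|c_j|$. Combining these and letting $m\to\infty$ yields, after dividing out, an inequality of the form $\delta\sqrt{M}\le C$ for a constant $C=\sup_\ell|a_\ell|$ independent of $M$, which is impossible once $M>C^2/\delta^2$. Hence $|S_\delta|\le C^2/\delta^2<\infty$.

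The main obstacle I anticipate is handling the $\liminf$ correctly, since the phases of $F_m(\lambda_j)$ may vary with $m$ and need not converge. The clean way around this is to pass to a common subsequence of $(N_m)$ along which all $M$ phases $\arg F_m(\lambda_j)$ converge (possible since each lives on the compact torus, and there are only finitely many $j$), and along which each $|F_m(\lambda_j)|$ is close to its $\liminf$ and thus $\ge\delta/2$. On that subsequence the coefficients $c_j:=e^{-i\theta_j}$ with $\theta_j:=\lim\arg F_m(\lambda_j)$ make the phase-alignment rigorous, and the orthogonality relations still hold along the subsequence. Once this subsequential extraction is set up, the remaining estimates are the routine Cauchy--Schwarz and orthogonality computations sketched above, and the explicit bound $|S_\delta|\le (\sup_\ell|a_\ell|)^2/\delta^2$ drops out.
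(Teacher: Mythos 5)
Your proof is correct. The paper itself gives no proof of this proposition --- it only cites Theorem~41 of Rosenblatt's paper and remarks that the argument there (stated for $N_m=m$) carries over verbatim --- and your Bessel-type almost-orthogonality argument is exactly the standard mechanism behind that result: for $M$ distinct points $\lambda_1,\dots,\lambda_M$ in the set, the off-diagonal averages $\frac1{N_m}\sum_{\ell\le N_m}(\lambda_i\overline{\lambda_j})^\ell$ tend to $0$ by the geometric-sum bound, Cauchy--Schwarz against the bounded sequence $(a_\ell)$ gives $M(\delta-\varepsilon)\le C\sqrt{M}\,(1+o(1))$ with $C=\sup_\ell|a_\ell|$, and hence $M\le C^2/\delta^2$. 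One small simplification: the subsequence extraction to make the phases $\arg F_m(\lambda_j)$ converge is not needed, since you may let the unimodular coefficients $c_j=c_j^{(m)}$ depend on $m$; the off-diagonal error in $\frac1{N_m}\sum_\ell|g_m(\ell)|^2$ is bounded by $\sum_{i\ne j}\bigl|\frac1{N_m}\sum_{\ell\le N_m}(\lambda_i\overline{\lambda_j})^\ell\bigr|\to 0$ uniformly over all choices of unimodular coefficients. (Also, to match $\sum_j\overline{c_j}F_m(\lambda_j)$ you should take $g(\ell)=\sum_j c_j\overline{\lambda_j}^{\,\ell}$ rather than $\sum_j c_j\lambda_j^\ell$, but this changes nothing in the estimates.)
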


Recall that the \emph{upper density} of a  subsequence $(k_n)$ of $\N$ is defined by
 \[\ol{d}(k_n):=\limsup_{N\to\infty}\frac{|\{n:\, k_n
\leq N\}|}{N}.\]

\begin{cor}
Let  $(k_n)$  be a good subsequence of $\N$ with positive upper density. For every $\delta>0$ the set 
$\{\lambda \in \T\, :\, |c(\lambda)|\geq \delta\}$ is finite. In 
particular, $(k_n)$ has countable spectrum. 
\end{cor}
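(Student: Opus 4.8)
The plan is to apply Boshernitzan's Proposition \ref{prop-BR} to the characteristic function of the range of $(k_n)$, after reconciling the two different normalizations of the averages that appear. Set $a_\ell:=\one_{\{k_n:\,n\in\N\}}(\ell)$ and write $A(N):=|\{n:\,k_n\le N\}|$ for the counting function, so that $\sum_{\ell=1}^N a_\ell\lambda^\ell=\sum_{n:\,k_n\le N}\lambda^{k_n}$ and, by assumption, $\limsup_{N\to\infty}A(N)/N=\ol d(k_n)=:d>0$.

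First I would choose a single subsequence $(N_m)$, independent of $\lambda$, along which $A(N_m)/N_m\to d$. Then for each fixed $\lambda\in\T$ I would factor
\[
\frac1{N_m}\sum_{\ell=1}^{N_m}a_\ell\lambda^\ell=\frac{A(N_m)}{N_m}\cdot\frac1{A(N_m)}\sum_{n=1}^{A(N_m)}\lambda^{k_n}.
\]
Since $d>0$ forces $A(N_m)\to\infty$, and since $(k_n)$ is good, the second factor converges to $c(\lambda)$ while the first converges to $d$; hence the whole expression tends to $d\,c(\lambda)$. In particular the $\liminf$ appearing in Proposition \ref{prop-BR} is an honest limit along $(N_m)$ and equals $d\,|c(\lambda)|$.

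Consequently $\{\lambda:\,|c(\lambda)|\ge\delta\}\subseteq\bigl\{\lambda:\,\liminf_{m\to\infty}\tfrac1{N_m}\bigl|\sum_{\ell=1}^{N_m}a_\ell\lambda^\ell\bigr|\ge d\delta\bigr\}$, and the latter set is finite by Proposition \ref{prop-BR} applied with the positive threshold $d\delta$. This gives the first assertion. The ``in particular'' statement then follows by writing the spectrum as $\Lambda=\{\lambda:\,c(\lambda)\neq0\}=\bigcup_{j\ge1}\{\lambda:\,|c(\lambda)|\ge 1/j\}$, a countable union of finite sets.

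The main obstacle is essentially bookkeeping: Boshernitzan's statement normalizes by the length $N$ of the summation window, whereas $c(\lambda)$ is defined as an average over the first $N$ terms of the subsequence $(k_n)$. The ratio $A(N)/N$ bridges the two, and the positivity of the upper density is exactly what keeps this ratio bounded away from $0$ along a suitable subsequence $(N_m)$, so that a lower bound on $|c(\lambda)|$ transfers to a lower bound on the Boshernitzan averages. The only point requiring care is that $(N_m)$ must be selected once and for all before $\lambda$ is fixed, which is legitimate since its construction depends only on the counting function $A$.
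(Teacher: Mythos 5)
Your argument is correct and is essentially the paper's own proof: both select a subsequence $(N_m)$ realizing the positive upper density, observe that $\frac1{N_m}\sum_{\ell=1}^{N_m}\one_A(\ell)\lambda^\ell\to d\,c(\lambda)$ (you merely make the factorization through $A(N_m)/N_m$ explicit), and then invoke Boshernitzan's Proposition \ref{prop-BR} with the threshold $d\delta$. The only difference is that you spell out the bookkeeping and the countable-union step for the spectrum, which the paper leaves implicit.
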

\begin{proof}
 By assumption, there exists a subsequence $(N_m)_{m\in \N}$ of $\N$ such that 
 \[
 \lim_{m\to\infty}\frac{|\{n:\, k_n
\leq N_m\}|}{N_m}= \gamma>0.
\]
Then, for $A=\{k_n:n\in \N\}$ and for every $\lambda\in \T$ we obtain
\[
\frac1{N_m}\sum_{\ell=1}^{N_m}\lambda^\ell\mathbf{1}_A(\ell)
\underset{m\to \infty}\longrightarrow \gamma c(\lambda).
\]
An application of Proposition \ref{prop-BR} with $a_\ell=\mathbf{1}_A(\ell)$ finishes the proof.
\end{proof}

\begin{remark}\label{rem:norel1}
A good sequence need not have positive upper density as, e.g., $k_n=n^2$ shows. See Section \ref{sec:pol-primes} below for this and other examples. On the other hand, a sequence with positive upper density (and even density) does not have to be good. Indeed, take $2\N$ and change $2n$ to $2n+1$ if $2n$ lies in any  interval of the form $[4^\ell,2\cdot4^{\ell}]$, $\ell\in \N$. This sequence has density $1/2$ but $c(-1)$ does not exist. Modifying this construction it is easy to construct a sequence with density arbitrarily close to $1$ which is not good. (Note that $1$ cannot be achieved: every sequence with density $1$ is automatically good.)
\end{remark}

\begin{remark}
Suppose $(k_n)$ is a subsequence of $\N$ (not necessarily good) such that there is an at most 
countable set $\Lambda$ such that $c(\lambda)$ exists and equals
$0$ for every $\lambda \not\in \Lambda$.
By carrying out the same calculation as in the proof of (a) in 
Proposition \ref{prop:W-seq} and using the Koopman--von Neumann Lemma \ref{lem:KvN}  we see that for each continuous measure on $\T$ we have $\Hat
\mu(k_n)\to 0$ in density. It would be interesting to characterize
those subsequences $(k_n)$ for which a (probability) measure is
continuous if and only if $\Hat \mu(k_n)\to 0$ in density.
\end{remark}


\section{(Wiener) extremal subsequences}\label{sec:extr}

In this section we characterize subsequences $(k_n)$ for which the equivalences (i)$\Leftrightarrow$(iii) and (ii)$\Leftrightarrow$(iii) in Corollary \ref{cor:W-extremal} remain valid and show that (i)$\Leftrightarrow$(ii) fails in general.

\begin{definition}\label{def:extr}
Let $(k_n)$ be a subsequence of $\N$.
 We call a Borel probability measure $\mu$ \emph{Wiener extremal} or \emph{extremal} along $(k_n)$ if $\mu$ satisfies
\[
\limaveN |\Hat{\mu}(k_n)|^2=1 \quad\text{or}\quad \lim_{n\to\infty} |\Hat{\mu}(k_n)|=1,\quad\text{respectively}.
\] 
A subsequence $(k_n)$ in $\N$  is called \emph{(Wiener) extremal} if every (Wiener) extremal measure is a Dirac measure. If every (Wiener) extremal discrete measure is Dirac, then we call $(k_n)$ \emph{(Wiener) extremal for discrete measures}.
\end{definition}

We first consider  Wiener extremal sequences.
\begin{thm}\label{thm:extr-W}
For a subsequence $(k_n)$  of $\N$ consider the following assertions:
\begin{iiv}
\item $(k_n)$ is Wiener extremal.
\item $(k_n)$ is Wiener extremal for discrete measures.
\item For each $z\in \T$ whenever
\[
\Dlim_{n\to\infty} z^{k_n}\to 1,
\]
then $z=1$.
\item $c(\lambda)=1$ implies $\lambda=1$.
\end{iiv}
Then {\upshape (i)}$\Rightarrow${\upshape (ii)}$\Leftrightarrow${\upshape (iii)}$\Leftrightarrow${\upshape (iv)}. Moreover, {\upshape (i)}$\Leftrightarrow${\upshape (ii)} if $(k_n)$ is good.
\end{thm}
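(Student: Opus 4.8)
The plan is to prove the implications in the order (i)$\Rightarrow$(ii), (iii)$\Leftrightarrow$(iv), (iii)$\Rightarrow$(ii), (ii)$\Rightarrow$(iii), and finally (ii)$\Rightarrow$(i) under the goodness hypothesis. The step (i)$\Rightarrow$(ii) is immediate, since Dirac measures are discrete and hence the Wiener extremal discrete measures form a subclass of all Wiener extremal measures. The recurring tool will be Lemma \ref{lem:KvN}(b): because $|\Hat\mu(k_n)|\le 1$ for every probability measure, the extremality condition $\limaveN|\Hat\mu(k_n)|^2=1$ is equivalent to $\Dlim_{n\to\infty}|\Hat\mu(k_n)|^2=1$, and, more generally, a bounded sequence that converges in density also converges in the Cesàro sense to the same limit. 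For (iii)$\Leftrightarrow$(iv) I would argue as follows: if $c(\lambda)=1$, then applying Lemma \ref{lem:KvN}(b) to $\Re(\lambda^{k_n})\in(-\infty,1]$ gives $\Re(\lambda^{k_n})\to1$ in density, and since $|\lambda^{k_n}|=1$ this forces $\lambda^{k_n}\to1$ in density, so (iii) yields $\lambda=1$; conversely, $\Dlim_{n\to\infty}z^{k_n}=1$ makes the Cesàro average converge, so $c(z)$ exists and equals $1$, whence (iv) gives $z=1$.

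The core of the argument is (iii)$\Rightarrow$(ii). I would take a discrete Wiener extremal measure $\mu=\sum_j p_j\delta_{a_j}$ with distinct atoms $a_j$ and weights $p_j>0$, and expand, using $\sum_{j,l}p_jp_l=1$ and the fact that $|\Hat\mu(k_n)|^2$ is real, into the pointwise identity
\[
1-|\Hat\mu(k_n)|^2=\sum_{j,l}p_jp_l\bigl(1-\Re((a_j\ol{a_l})^{k_n})\bigr),
\]
in which every summand is nonnegative. Extremality together with Lemma \ref{lem:KvN}(b) makes the left-hand side tend to $0$ in density, hence each individual summand does as well; for any two atoms $a,b$ this yields $\Re((a\ol b)^{k_n})\to1$ in density, and therefore $\Dlim_{n\to\infty}(a\ol b)^{k_n}=1$ since $|a\ol b|=1$. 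Hypothesis (iii) then forces $a\ol b=1$, i.e.\ $a=b$, so $\mu$ has a single atom and is a Dirac measure.

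For (ii)$\Rightarrow$(iii) I would argue by contraposition. If (iii) fails, choose $z\ne1$ with $\Dlim_{n\to\infty}z^{k_n}=1$ and set $\mu:=\tfrac12(\delta_1+\delta_z)$. A direct computation gives $|\Hat\mu(k_n)|^2=\tfrac12\bigl(1+\Re(z^{k_n})\bigr)$, which tends to $1$ in density and hence in the Cesàro sense; thus $\mu$ is a non-Dirac Wiener extremal discrete measure and (ii) fails.

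Finally, to upgrade (ii) to (i) when $(k_n)$ is good, I would let $\mu$ be an arbitrary Wiener extremal probability measure and invoke Corollary \ref{cor:coset}: since the extremal limit equals $1$, the corollary guarantees that $\mu$ is automatically discrete and satisfies $c(a\ol b)=1$ for all its atoms $a,b$. Feeding this into (iv) (equivalent to (ii)) forces every pair of atoms to coincide, so $\mu$ is Dirac. I expect this last step to be the only delicate point: goodness is precisely what provides the a priori discreteness of an extremal measure through Corollary \ref{cor:coset}, whereas the pair-decomposition argument of (iii)$\Rightarrow$(ii) controls only discrete measures — which is exactly why (i) and (ii) can differ for sequences that are not good.
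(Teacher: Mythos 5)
Your proof is correct and follows essentially the same route as the paper: (i)$\Rightarrow$(ii) trivially, (iii)$\Leftrightarrow$(iv) via the Koopman--von Neumann lemma (the content of Remark \ref{rem:c}), the two-atom construction $\mu=\tfrac12(\delta_1+\delta_z)$ for (ii)$\Rightarrow$(iii), and Corollary \ref{cor:coset} for the goodness upgrade. The only cosmetic difference is in (iii)$\Rightarrow$(ii), where you use the full quadratic expansion $1-|\Hat\mu(k_n)|^2=\sum_{j,l}p_jp_l\bigl(1-\Re((a_j\ol{a_l})^{k_n})\bigr)$ with nonnegative summands, while the paper isolates two atoms via the triangle inequality; both yield $\Dlim_{n\to\infty}(a\ol{b})^{k_n}=1$ for every pair of atoms.
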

\begin{proof}(i)$\Rightarrow$(ii) is trivial and (iii)$\Leftrightarrow$(iv) follows from Remark \ref{rem:c}.

\medskip\noindent (iii)$\Rightarrow$(ii): Assume that there exists a discrete probability measure which is extremal and not Dirac. Let $a, b$ be two different atoms of $\mu$. Since
\[
|\Hat\mu(n)|\leq |a^{n}\mu(\{a\})+b^{n}\mu(\{b\})|+\sum_{\lambda\neq a,b \text{\ atom}}\mu(\{\lambda\})\leq 1,
\]
the extremality of $\mu$ implies that $|a^{k_n}\mu(\{a\})+b^{k_n}\mu(\{b\})|$ converges in density to $\mu(\{a\})+\mu(\{b\})$ or, equivalently, that  $|a^{k_n}-b^{k_n}|$ converges in density to $1$. Taking $z:=a\ol{b}\neq 1$ in (iii), we arrive at a contradiction.

\medskip\noindent(ii)$\Rightarrow$(iii): Assume that there exists $z\in\T$  with $z\neq 1$ such that
$z^{k_n}$ converges to $1$ in density.
Then for the probability measure $\mu$ defined by $\mu(\{1\})=\mu(\{z\})=1/2$
\[
\Hat\mu(k_n)=\frac{1+z^{k_n}}{2}
\]
converges to $1$ in density, hence (i) is false.

The last assertion follows immediately from Corollary \ref{cor:coset}.
\end{proof}

Replacing,  in the above proof,  the Ces\`aro limit by the classical limit and convergence in density by classical convergence yields the following.

\begin{thm}\label{thm:extr}
For a sequence $(k_n)$  in $\N$ consider the following assertions:

\begin{iiv}
\item $(k_n)$ is extremal.
\item $(k_n)$ is extremal for discrete measures.
\item $G_\infty((k_n)):=\{z:z^{k_n}\to 1\}=\{1\}$.
\end{iiv}
Then {\upshape (i)}$\Rightarrow${\upshape (ii)}$\Leftrightarrow${\upshape (iii)}. Moreover, {\upshape (i)}$\Leftrightarrow${\upshape (ii)} if $(k_n)$ 
is good.\end{thm}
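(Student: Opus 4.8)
The plan is to mirror the structure of the proof of Theorem \ref{thm:extr-W}, replacing throughout the Ces\`aro averages by ordinary limits and convergence in density by ordinary convergence. The implication (i)$\Rightarrow$(ii) is immediate, since every discrete extremal measure is in particular an extremal measure, so if all extremal measures are Dirac then so are all discrete ones. It therefore remains to establish (ii)$\Leftrightarrow$(iii) and the final equivalence (i)$\Leftrightarrow$(ii) under the goodness hypothesis.

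For (iii)$\Rightarrow$(ii), I would argue by contraposition exactly as in Theorem \ref{thm:extr-W}: suppose some discrete probability measure $\mu$ is extremal (i.e. $\lim_{n\to\infty}|\Hat\mu(k_n)|=1$) but not Dirac, and pick two distinct atoms $a,b$. Using the bound
\[
|\Hat\mu(k_n)|\leq |a^{k_n}\mu(\{a\})+b^{k_n}\mu(\{b\})|+\sum_{\lambda\neq a,b\ \text{atom}}\mu(\{\lambda\})\leq 1,
\]
extremality forces $|a^{k_n}\mu(\{a\})+b^{k_n}\mu(\{b\})|\to\mu(\{a\})+\mu(\{b\})$, which is equivalent to $|a^{k_n}-b^{k_n}|\to 0$, i.e. $(a\ol b)^{k_n}\to 1$. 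Setting $z:=a\ol b\neq 1$ contradicts (iii). For (ii)$\Rightarrow$(iii) I would again use contraposition: if some $z\neq 1$ satisfies $z^{k_n}\to 1$, then the two-point measure $\mu$ with $\mu(\{1\})=\mu(\{z\})=1/2$ has $\Hat\mu(k_n)=(1+z^{k_n})/2\to 1$, so $\mu$ is a non-Dirac discrete extremal measure, violating (ii).

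For the final clause, that (i)$\Leftrightarrow$(ii) when $(k_n)$ is good, the nontrivial direction is (ii)$\Rightarrow$(i). Here the extremal hypothesis $\lim_{n\to\infty}|\Hat\mu(k_n)|=1$ implies $\limaveN|\Hat\mu(k_n)|^2=1$, so $\mu$ is also Wiener extremal; by Corollary \ref{cor:coset} such a $\mu$ must be discrete and supported in a coset of $G_d$. Once $\mu$ is known to be discrete, the assumption (ii) (Wiener extremal for discrete measures is not quite what we have, but the established equivalence (ii)$\Leftrightarrow$(iii) lets us invoke (iii)) forces $\mu$ to be Dirac, whence (i) holds. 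The main obstacle, and the step deserving the most care, is precisely this use of goodness to upgrade ``extremal'' to ``discrete extremal'': one must verify that the hypothesis $|\Hat\mu(k_n)|\to 1$ genuinely yields the Ces\`aro condition \eqref{eq:extr-seq} so that Corollary \ref{cor:coset} applies, and that the reduction to discreteness does not secretly require the stronger assumption (i) one is trying to prove.
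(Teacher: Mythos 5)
Your proposal is correct and is essentially the paper's own proof, which consists of the single remark that one obtains Theorem \ref{thm:extr} by replacing Ces\`aro limits by classical limits and convergence in density by ordinary convergence in the proof of Theorem \ref{thm:extr-W}, with the final clause again following from Corollary \ref{cor:coset} exactly as you describe. Your parenthetical worry in the last step is unnecessary: condition (ii) here is ``extremal for discrete measures'' (not Wiener extremal), so once goodness and Corollary \ref{cor:coset} give discreteness of $\mu$, hypothesis (ii) applies directly without routing through (iii).
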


\begin{remark}[Ergodic sequences]\label{rem:erg}
By the above characterizations or by Proposition \ref{prop:W-seq} (c), every ergodic sequence is Wiener extremal and hence extremal, too.
\end{remark}

 We recall the notation $G_d=:\{\lambda\in\T:\, \lambda^d=1\}$ and observe the following.

\begin{prop}\label{p:bddgapsextr}
Let $(k_n)$ be a subsequence of $\N$  satisfying
\[
\liminf_{n\to \infty}(k_{n+1}-k_n)<\infty. 
\]
Then any   probability measure $\mu$ that is extremal along $(k_n)$ is discrete  with $\supp(\mu)\subseteq \lambda_0 G_d$ for some $d\in\N$ and some $\lambda_0\in\T$.
 As a consequence,
 the following assertions are equivalent:
\begin{iiv}
\item $(k_n)$ is  extremal.
\item $(k_n)$ is  extremal for discrete measures.
\item For every $q\in\N$, $q\geq 2$ there are infinitely many $n$ with $k_n\notin q\N$.
\end{iiv}

\end{prop}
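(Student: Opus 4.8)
The plan is to prove the support statement first, since both the equivalence (i)$\Leftrightarrow$(ii) and the analysis of (iii) follow from it with little extra work; the equivalence (ii)$\Leftrightarrow$(iii) will then be read off from Theorem \ref{thm:extr} together with an elementary root-of-unity computation. Since the gaps $k_{n+1}-k_n$ are eventually positive integers and have finite $\liminf$, pigeonholing produces a fixed $g\in\N$ and an infinite set of indices $n_j$ with $k_{n_j+1}-k_{n_j}=g$; this $g$ is all the bounded-gap hypothesis will be used for.

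First I would unwind the extremality. Let $\mu$ be extremal along $(k_n)$ and write $\Hat\mu(k_n)=r_n\rme^{\rmi\theta_n}$ with $r_n=|\Hat\mu(k_n)|\to 1$. From
\[
r_n=\Re\bigl(\rme^{-\rmi\theta_n}\Hat\mu(k_n)\bigr)=\int_\T\Re\bigl(\rme^{-\rmi\theta_n}z^{k_n}\bigr)\dd\mu(z)
\]
and the pointwise bound $\Re(\cdot)\le 1$ I obtain $\int_\T\bigl(1-\Re(\rme^{-\rmi\theta_n}z^{k_n})\bigr)\dd\mu=1-r_n\to 0$. As $z^{k_n}$ is unimodular, $|1-\rme^{-\rmi\theta_n}z^{k_n}|^2=2\bigl(1-\Re(\rme^{-\rmi\theta_n}z^{k_n})\bigr)$, so in fact $\rme^{-\rmi\theta_n}z^{k_n}\to 1$ in $\Ell^2(\mu)$ along the full sequence. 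Now the bounded gaps let me eliminate the unknown phases: along the indices $n_j$ both $\rme^{-\rmi\theta_{n_j}}z^{k_{n_j}}\to 1$ and $\rme^{-\rmi\theta_{n_j+1}}z^{k_{n_j+1}}\to 1$ in $\Ell^2(\mu)$, and multiplying the second by the (unimodular) conjugate of the first gives $\rme^{\rmi(\theta_{n_j}-\theta_{n_j+1})}z^{g}\to 1$ in $\Ell^2(\mu)$. Setting $c_j:=\rme^{-\rmi(\theta_{n_j}-\theta_{n_j+1})}\in\T$ this reads $\|z^g-c_j\|_{\Ell^2(\mu)}\to 0$; passing to a convergent subsequence $c_j\to c\in\T$ forces $z^g=c$ for $\mu$-a.e.\ $z$. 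Hence $\mu$ is concentrated on the finite set $\{z\in\T:z^g=c\}=\lambda_0 G_g$, where $\lambda_0$ is any fixed $g$-th root of $c$, proving that every extremal $\mu$ is discrete and supported in one coset $\lambda_0 G_d$ with $d:=g$.

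Finally I would harvest the equivalences. The implication (i)$\Rightarrow$(ii) is trivial, and (ii)$\Rightarrow$(i) is now immediate: by the previous paragraph every extremal measure is discrete, so if all extremal \emph{discrete} measures are Dirac, so are all extremal measures. For (ii)$\Leftrightarrow$(iii), Theorem \ref{thm:extr} reduces (ii) to $G_\infty((k_n))=\{1\}$, and it remains to match this with (iii). If (iii) fails, some $q\ge 2$ divides $k_n$ for all large $n$; then a primitive $q$-th root of unity $z\ne 1$ satisfies $z^{k_n}=1$ eventually, so $z\in G_\infty$ and $G_\infty\ne\{1\}$. Conversely, if $z\in G_\infty$ with $z\ne 1$, then $z^{k_n}\to 1$, and evaluating along the bounded-gap indices gives $z^{g}=z^{k_{n_j+1}-k_{n_j}}=\lim_j z^{k_{n_j+1}}\,\overline{z^{k_{n_j}}}=1$; thus $z$ has finite order $q\ge 2$, the values $z^{k_n}$ all lie in the finite set $G_q$, and convergence to $1$ forces $z^{k_n}=1$, i.e.\ $q\mid k_n$, for all large $n$ — the negation of (iii).

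The step I expect to be the crux is the phase cancellation in the second paragraph: the hypothesis $r_n\to 1$ gives no control over the arguments $\theta_n$, and it is exactly the bounded-gap condition that lets me eliminate them by comparing the consecutive exponents $k_{n_j}$ and $k_{n_j+1}$, after which the finiteness of $G_g$ delivers discreteness at once. The remaining verifications — the $\Ell^2$ reformulation of extremality, the extraction of $g$, and the root-of-unity bookkeeping for (iii) — are routine.
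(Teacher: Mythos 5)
Your proof is correct and follows essentially the same route as the paper: the key step in both is to use the bounded gaps to compare consecutive exponents $k_{n_j}$ and $k_{n_j+1}$, cancel the unknown phases, and conclude that $\mu$ is supported on a coset of $G_d$ (you do this in $\Ell^2(\mu)$, the paper via a.e.\ convergent subsequences — a cosmetic difference). The only organizational deviation is at the end: the paper finishes (iii)$\Rightarrow$(i) by a direct convexity argument on the discrete measure $\sum_j c_j\delta_{\lambda_0\lambda_j}$, whereas you route through $G_\infty((k_n))=\{1\}$ and Theorem \ref{thm:extr}; both are valid and use the same ingredients.
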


 \begin{remark}\label{rem:unityextr} Note that assertion (iii) above just means that $(k_n)$ is  extremal for roots of unity, i.e., 
\[
\lim_{n\to\infty} \lambda^{k_n}=1,\ \lambda\in\T \text{ root of unity} \quad \Longrightarrow \quad\lambda=1.
\]
\end{remark}

 \begin{remark} A sequence $(k_n)$ with $\liminf_{n\to \infty}(k_{n+1}-k_n)<\infty$ need not be good. An example is given by the sequence $2,4,\dots,2^n,\dots$ where along a subsequence of density $0$ we insert $2^k+1$ right after $2^k$; or see Remark \ref{rem:norel1} for a not good sequence with positive density. Conversely, a good sequence (even if Wiener extremal) need not have such small gaps: Again $k_n=n^2$ is an example. Also, small gaps in $(k_n)$ do not imply that $(k_n)$ would be extremal, an example is $k_n=p_n+1$, $p_n$ the $n^\text{th}$ prime. See Section \ref{sec:pol-primes} for more information.
\end{remark}

\begin{proof}[Proof of Proposition \ref{p:bddgapsextr}]
By assumption there exists an integer 
$d\in \N$ and a subsequence $(n_\ell)_{\ell\in \N}$, such that 
\begin{equation}\label{eq:k_n}
k_{n_\ell+1}-k_{n_\ell}=d \quad \text{for all } \ell\in \N .
\end{equation}
 Let $\mu$ be extremal along $(k_n)$, and let $\theta_n\in[0,2\pi)$ be such that $\hat\mu(k_n)=\ee^{i\theta_n}|\hat 
\mu(k_n)|$. Then 
\[
\int_{[0,2\pi)}(1-\cos (k_nt-\theta_n))\dd\mu(t) 
=1-|\hat \mu(k_n)|
\underset{n\to \infty}\longrightarrow 0.
\]
Hence, 
$(\cos (k_{n_\ell}\cdot -\theta_{n_\ell}))_{\ell \in \N}$ admits a subsequence 
converging $\mu$-a.e. to 1. For simplicity, let us assume that the sequence itself converges $\mu$-a.e. to 1 and that $\theta_{n_\ell}
\to \alpha_0\in [0,2\pi]$ as $\ell\to \infty$. Similarly, we may assume that 
$(\cos (k_{n_\ell+1}\cdot -\theta_{n_\ell+1}))_{\ell \in \N}$ converges $\mu$-a.e 
to $1$ and that $\theta_{n_\ell+1}\to\alpha_1\in [0,2\pi]$ as $\ell\to\infty$.

\medskip\noindent By using \eqref{eq:k_n}, we infer that 
for $\mu$-a.e. $t\in [0,2\pi)$

\[
d t-\alpha_1+\alpha_0 =0\qquad \mod 2\pi .
\]
Hence, $\mu$ is a discrete measure with $\supp(\mu)\subseteq \lambda_0 G_d$ for $\lambda_0=\ee^{i(\alpha_1-\alpha_0)/d} $ and the first assertion is proven.

\medskip 
By Theorem \ref{thm:extr}, it remains to show (iii)$\Rightarrow$(i). Let $\mu$ be extremal along $(k_n)$. By the above we have $\mu=\sum_{j=1}^{d} c_j \delta_{\lambda_0\lambda_j}$, where $c_1,\ldots,c_d\geq 0$ with $\sum_{j=1}^d c_j=1$, $\lambda_0\in\T$ and $\lambda_1,\dots, \lambda_d$ being the $d^\text{th}$ roots of unity. The extremality of $\mu$ implies
\[
|\hat{\mu}(k_n)|^2=\Bigl|\sum_{j=1}^{d} c_j \lambda_j^{k_n}\Bigr|^2=\sum_{j,m=1}^d c_j c_m (\lambda_j\ol{\lambda_m})^{k_n}\underset{n\to \infty}\longrightarrow 1.
\]
By convexity reasons this is possible only if $\lim_{n\to \infty}(\lambda_j\ol{\lambda_m})^{k_n}=1$ whenever $c_jc_m\neq 0$. Thus (iii) and Remark \ref{rem:unityextr} imply $\lambda_j=\lambda_m$ whenever $c_jc_m\neq 0$, meaning that $\mu$ is Dirac.
\end{proof}

\begin{remark}\label{rem:pos-upper-dens-subseq}
For  a subsequence $(k_n)$ and a subset $J\subseteq \N$ of density $1$,  $(k_n)_{n\in J}$ has the same upper density as $(k_n)_{n\in\N}$ by
\[
 \frac{1}{N}\sum_{k_n\leq N,n\notin J} 1\leq \frac{1}{N}\sum_{n
\leq N,n\notin J} 1 \to 0\text{ as }N\to\infty.
 \]
\end{remark}

\begin{lemma}\label{l:densgap}
Let $(k_n)$ be a  subsequence of $\N$ with positive upper density. 
Then $\liminf_{n\to \infty}(k_{n+1}-k_n) <\infty$.
\end{lemma}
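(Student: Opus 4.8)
The plan is to prove the contrapositive. The hypothesis $\liminf_{n\to\infty}(k_{n+1}-k_n)<\infty$ fails exactly when $\liminf_{n\to\infty}(k_{n+1}-k_n)=+\infty$, and the latter is equivalent to $k_{n+1}-k_n\to\infty$. So I would assume $k_{n+1}-k_n\to\infty$ and deduce that $\ol{d}(k_n)=0$, contradicting the positivity hypothesis.

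The key step is to convert the growth of the gaps into an upper bound on the counting function $A(N):=|\{n:\,k_n\leq N\}|$. Fix $M>0$. By the assumption $k_{n+1}-k_n\to\infty$, there is $n_0$ with $k_{n+1}-k_n\geq M$ for all $n\geq n_0$, whence by telescoping
\[
k_n\geq k_{n_0}+(n-n_0)M\qquad (n\geq n_0).
\]
Consequently, if $k_n\leq N$ then $n\leq n_0+N/M$, so that $A(N)\leq n_0+N/M$.

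Dividing by $N$ and passing to the limit superior yields
\[
\ol{d}(k_n)=\limsup_{N\to\infty}\frac{A(N)}{N}\leq \frac1M.
\]
Since $M>0$ was arbitrary, this forces $\ol{d}(k_n)=0$, the desired contradiction with positive upper density.

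I expect no serious obstacle here; the argument is a clean counting estimate. The only points requiring a little care are the identification of the negation of ``$\liminf<\infty$'' with ``the gaps tend to infinity'', and the bookkeeping observation that the finitely many initial indices $n\leq n_0$ contribute only the harmless additive constant $n_0$ to $A(N)$, which washes out after division by $N$ and taking the limit superior.
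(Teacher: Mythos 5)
Your argument is correct and is essentially identical to the paper's proof: both assume $k_{n+1}-k_n\to\infty$, telescope the gaps to get $k_n\gtrsim (n-n_0)M$, bound the counting function by $N/M$ plus a constant, and conclude the upper density is at most $1/M$ for every $M$, hence zero. No issues.
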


\begin{proof}
Assume that $k_{n+1}-k_n\to\infty$ as $n\to\infty$. 
Let $A>0$. There exists $M>0$ such that for every $n\geq M$, 
$k_{n+1}-k_n \geq A$. Hence, for every $n\geq M$ we have 
$k_n\geq k_M+(n-M)A\geq (n-M)A$. Hence, for every $N\in \N$ large enough, 
\[
|\{n:\, k_n\leq N\}|\leq N/A+M\]
 and thus $\ol{d}(k_n)\leq 1/A\to 0$ as $A\to\infty$, resulting in a contradiction.
 \end{proof}

 \begin{remark}
  It is not difficult to exhibit sequences $(k_n)$ with density 0 such that $\liminf_{n\to \infty}(k_{n+1}-k_n) <\infty$. An important example is the sequence of primes $(p_n)$. It is a recent, highly non-trivial result of Zhang that $\liminf_{n\to \infty}(p_{n+1}-p_n) <\infty$, see \cite{Zhang} or the paper \cite{Polymath} by the Polymath project.
 \end{remark}

We have the following characterization of Wiener extremality for sequences with positive upper density. Note that extremality of such sequences was characterized in Proposition \ref{p:bddgapsextr}.

\begin{prop}[Wiener extremality of sequences with positive upper density]\label{prop:pos-upper-dens}
For a subsequence $(k_n)$ with positive upper density 
the following assertions are equivalent:
\begin{iiv}
\item  $(k_n)$ is Wiener extremal.
\item  $(k_n)$ is Wiener extremal for discrete measures.
\item 
 $\ol{d}(\{n:\, k_n\notin q\N\})>0$  for every $q\in\N$, $q\geq 2$.
\end{iiv}
\end{prop}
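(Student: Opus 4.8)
The plan is to prove (i)$\Rightarrow$(ii) trivially, then establish the equivalence (ii)$\Leftrightarrow$(iii) and the implication (ii)$\Rightarrow$(i); together these give all the equivalences. Throughout I would keep in mind that $(k_n)$ is \emph{not} assumed good, so the last sentence of Theorem \ref{thm:extr-W} is unavailable and the positive upper density hypothesis must take over its role. Since a discrete Wiener extremal measure is a special case of a Wiener extremal measure, (i)$\Rightarrow$(ii) holds by definition, and the two genuine tasks are the characterization (ii)$\Leftrightarrow$(iii) and the upgrade (ii)$\Rightarrow$(i) from discrete to arbitrary measures.

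The technical core, which I expect to be the main obstacle, is a density-gap lemma: \emph{if $(k_n)$ has positive upper density and $\Dlim_{n\to\infty} z^{k_n}=1$ for some $z\in\T$, then $z$ is a root of unity.} To prove it I would choose a density-$1$ set $J$ with $z^{k_n}\to 1$ along $J$, pass to the increasing reindexing $(m_j)$ of $(k_n)_{n\in J}$, and note by Remark \ref{rem:pos-upper-dens-subseq} that $(m_j)$ still has positive upper density. Lemma \ref{l:densgap} then yields $\liminf_j(m_{j+1}-m_j)<\infty$, so some fixed gap $d$ occurs infinitely often; along those indices $z^{m_j}\to 1$ and $z^{m_{j+1}}\to 1$, whence the constant $z^{d}=z^{m_{j+1}}/z^{m_j}$ must equal $1$. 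This reduction of an arbitrary $z$ to a root of unity is precisely what substitutes for the coset structure of Corollary \ref{cor:coset}, which is available only for good sequences.

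Granting this lemma, (ii)$\Leftrightarrow$(iii) would follow by combining it with the equivalence (ii)$\Leftrightarrow$(iii) of Theorem \ref{thm:extr-W}, namely that Wiener extremality for discrete measures is the same as the implication $\Dlim_{n\to\infty} z^{k_n}=1 \Rightarrow z=1$. For a primitive $q$-th root of unity $z$ one has $z^{k_n}=1$ exactly when $q\mid k_n$, and since $z^{k_n}$ is finite-valued, $\Dlim_{n\to\infty}z^{k_n}=1$ is equivalent to $\{n:q\mid k_n\}$ having density $1$, i.e.\ to $\ol{d}(\{n:k_n\notin q\N\})=0$. Thus the failure of (iii) produces a root-of-unity witness $z\neq 1$ with $\Dlim_{n\to\infty}z^{k_n}=1$, contradicting (ii); conversely, any witness $z\neq 1$ to the failure of (ii) is, by the lemma, a root of unity and hence yields a $q\geq 2$ violating (iii). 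Note that the direction (ii)$\Rightarrow$(iii) needs no density assumption, while (iii)$\Rightarrow$(ii) is exactly where the density-gap lemma is invoked.

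Finally, for (ii)$\Rightarrow$(i) I would take an arbitrary Wiener extremal probability measure $\mu$, so that $|\Hat\mu(k_n)|\to 1$ in density by Lemma \ref{lem:KvN}(b); restricting to a density-$1$ set $J$ and reindexing $(k_n)_{n\in J}$ as $(m_j)$ gives $|\Hat\mu(m_j)|\to 1$ in the ordinary sense along a sequence of positive upper density. Lemma \ref{l:densgap} again provides $\liminf_j(m_{j+1}-m_j)<\infty$, so the first assertion of Proposition \ref{p:bddgapsextr} applies to $(m_j)$ and forces $\mu$ to be discrete with support in a coset $\lambda_0 G_d$. Once $\mu$ is known to be discrete, hypothesis (ii) applies directly and yields that $\mu$ is a Dirac measure. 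The delicate point here is that the passage to $J$ is what converts convergence in density into genuine convergence, thereby making Proposition \ref{p:bddgapsextr} usable despite the absence of goodness.
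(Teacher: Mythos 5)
Your proof is correct, but in both non-trivial steps it runs along a genuinely different track from the paper's. For the equivalence (ii)$\Leftrightarrow$(iii), the paper rules out irrational witnesses $z$ by equidistribution of $(z^n)$: if $z$ is not a root of unity, the sets $\{m:|z^m-1|<\varepsilon\}$ have density tending to $0$, so a positive-upper-density set of exponents cannot eventually sit inside them. You instead extract, via Remark \ref{rem:pos-upper-dens-subseq} and Lemma \ref{l:densgap}, a gap value $d$ occurring infinitely often along the density-one subsequence and conclude $z^d=1$ directly; this is more elementary (no equidistribution input) and hands you the root-of-unity structure explicitly, which then plugs cleanly into Theorem \ref{thm:extr-W}. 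For (ii)$\Rightarrow$(i), the paper decomposes $\mu=\mu_d+\mu_c$, uses the classical Wiener lemma plus Koopman--von Neumann to find a density-one set along which $\Hat{\mu_c}$ vanishes in the limit, and intersects it with the good subsequence to force $\mu_c=0$; you instead invoke the first assertion of Proposition \ref{p:bddgapsextr} (again via Lemma \ref{l:densgap}) to force $\mu$ to be discrete outright. Your route leans on the machinery of Section \ref{sec:extr} built for bounded-gap sequences, whereas the paper's is self-contained modulo the classical Wiener lemma; both are complete, and your observation that the passage to a density-one index set is what converts density convergence into genuine convergence is exactly the right pivot in either version.
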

\noindent Note that assertion (iii) above just means that $(k_n)$ is  Wiener extremal for roots of unity, i.e., 
\[
\Dlim_{n\to\infty} \lambda^{k_n}=1,\ \lambda\in\T \text{ root of unity} \quad \Longrightarrow \quad\lambda=1.
\]
\begin{proof}
It suffices to show the implications (ii)$\Rightarrow$(i) and 
(ii)$\Leftrightarrow$(iii). %

\medskip\noindent (ii)$\Rightarrow$(i): Suppose that $(k_n)$ is Wiener extremal for discrete measures and let $\mu$ be a Wiener extremal measure along $(k_n)$ with decomposition $\mu=\mu_d+\mu_c$ into discrete and continuous parts. By Lemma \ref{lem:KvN} (b) and Remark \ref{rem:pos-upper-dens-subseq} there exists a subsequence $(k_n')$ of $(k_n)$ of positive upper density such that 
$\lim_{n\to\infty}|\hat{\mu}(k_n')|=1$. By Theorem \ref{thm:Wiener}, Lemma \ref{lem:KvN} (a)  there is a subsequence $(m_n)$ of $\N$ of density one with $\lim_{n\to\infty}|\hat{\mu_c}(m_n)|=0$. Denoting by $(\ell_n)$ the non-trivial intersection of $(m_n)$ with $(k_n')$ we obtain
\[
1=\lim_{n\to\infty}|\hat{\mu}(\ell_n)|=\lim_{n\to\infty}|\hat{\mu_d}(\ell_n)|.
\]
Thus $\mu_d$ is also a probability measure, and therefore $\mu=\mu_d$ is Dirac by the assumption. 

\medskip\noindent (ii)$\Leftrightarrow$(iii):
By the equidistribution of $(z^n)$ for any irrational $z$ (i.e., $z$ not a root of unity) combined with Lemma \ref{lem:KvN} (b) and Remark \ref{rem:pos-upper-dens-subseq}
 we obtain the implication:
\[
\Dlim_{n\to\infty}z^{k_n}= 1 \quad\Longrightarrow\quad z\text{  is rational}.
\]
In particular, by Theorem \ref{thm:extr-W} $(k_n)$ is not extremal for discrete measures if and only if there exists $q\in\N$, $q\geq 2$ such that the set of $n$ with $k_n\in q\N$ has density one.
\end{proof}

Thus, the question of characterizing extremality becomes
interesting for sequences of density zero, see Section \ref{sec:pol-primes}.

\begin{ex}[Return time sequences]
Let $(X,\mu,T)$ be an ergodic measure preserving probability system, and let $T$ also denote the corresponding Koopman operator on $\Ell^2(X,\mu)$ defined by $Tf=f\circ T$. Let $A\subseteq X$ with $\mu(A)>0$. We show that 
for almost every $x\in X$ the \emph{return times} sequence $(k_n)$ corresponding to $\{n\in\N:\, T^nx\in A\}$  is Wiener extremal (and hence extremal) whenever $T$ is totally ergodic. Note that return times sequences play an important role for ergodic theorems, see Bourgain's celebrated return times theorem in Bourgain, Furstenberg, Katznelson, Ornstein \cite{B-RTT} and a survey by Assani, Presser \cite{AP}.  Let $\pi_{A,x}(n):=|\{k\leq n:\, T^kx\in A\}|$. We have for $\lambda\in\T$ 
\[
\frac1{\pi_{A,x}(n)}\sum_{k\leq n, T^kx\in A}\lambda^k=\frac{1}{\pi_{A,x}(n)}\sum_{k=1}^n \mathbf{1}_{T^{-k}A}(x) \lambda^k
=\frac{n}{\pi_{A,x}(n)}\frac1n \sum_{k=1}^n( T^k\mathbf{1}_{A})(x) \lambda^k.
\]
 Birkhoff's ergodic theorem and the ergodicity assumption imply that for almost every $x\in X$
\[
\lim_{n\to\infty}\frac{\pi_{A,x}(n)}n=\lim_{n\to\infty}\frac1n \sum_{k=1}^n (T^k\mathbf{1}_{A})(x) =\mu(A), 
\]
i.e., the density of $(k_n)$ equals $\mu(A)$.  Hence, by the 
Wiener--Wintner theorem, see \cite{WW}, for almost every $x$
\[
c(\lambda)=\frac1{\mu(A)}\lim_{n\to\infty}\frac1n \sum_{k=1}^n (T^k\mathbf{1}_{A})(x) \lambda^k =\frac1{\mu(A)}(P_{\ol{\lambda}}\mathbf{1}_{A})(x)\quad \text{for all }\lambda\in\T,
\] 
where $P_{\ol{\lambda}}$ denotes the orthogonal projection onto $\ker(\ol\lambda-T)$. Thus for almost every $x$ the spectrum of the return times sequence is at most countable.
We suppose now that $T$ is totally ergodic and show that $(k_n)$ is Wiener extremal. As in the proof of Proposition  \ref{prop:pos-upper-dens}, if $\limaveN \lambda^{k_n}=1$, then $\lambda$ is rational (i.e., a root of unity). But then total ergodicity implies that $c(\lambda)=0$ for $\lambda\neq 1$, implying $\lambda=1$, and this shows that $(k_n)$ is Wiener extremal.  Note that here total ergodicity cannot be replaced by ergodicity. Indeed, the rotation on two points is ergodic, but for $A$ consisting of one point the return times sequence $(k_n)=2\N$ is not extremal. 
\end{ex}

\begin{ex}[Return time sequences along polynomials]\label{page:polret}
Let $(X,\mu,T)$ be an invertible totally ergodic system, let $T$ denote also its Koopman operator on $\Ell^2(X,\mu)$ and let $\mu(A)>0$. Take a polynomial $P\in\Z[\cdot]$ with $\deg (P)\geq 2$. We show that the return times sequence $(k_n)$ along $P$ corresponding to $\{n\in\N:\, T^{P(n)}x\in A\}$ is ergodic and hence Wiener extremal and extremal for almost every $x$. (That the sequence is Wiener extremal is also for true for linear polynomials, which can be easily deduced from the previous example.)

\medskip\noindent
We let  $\pi_{A,x,P}(n):=|\{k\leq n:\, T^{P(k)}x\in A\}|$ and compute for $\lambda\in\T$
\begin{equation}\label{eq:return-pol-density}
\lim_{n\to\infty}\frac{\pi_{A,x,P}(n)}n=\lim_{n\to\infty}\frac1n \sum_{k=1}^n (T^{P(k)}\mathbf{1}_{A})(x) =\mu(A)\quad \text{a.e. $x\in X$}, 
\end{equation}
where the last equality follows from a.e.~convergence of polynomial averages by Bourgain \cite{B86,B88,B}, from the fact that the rational spectrum factor is characteristic for polynomial averages (see e.g.~Einsiedler, Ward \cite[Sec.{} 7.4]{EW}) and from total ergodicity.

It is a further result of Bourgain that the limit 
\begin{equation}\label{eq:lambdabourgain}
\lim_{n\to\infty}\frac1n \sum_{k=1}^n( T^{P(k)}\mathbf{1}_{A})(x) \lambda^k
\end{equation}
exists for each $\lambda\in \T$ for a.e. $x\in X$, see \cite{EK}.
Since $\deg( P)\geq 2$, by the spectral theorem,  by the equidistribution of polynomials with at least one irrational non-constant coefficient and  by total ergodicity, the limit in \eqref{eq:lambdabourgain} for almost every $x\in X$
 equals $\mu(A)$ for $\lambda=1$ and $0$ if $\lambda\neq 1$. Combining this with \eqref{eq:return-pol-density} gives 
\begin{align*}
\lim_{n\to\infty}\frac1{\pi_{A,x,P}(n)}\sum_{k\leq n, T^{P(k)}x\in A}\lambda^k
&=\lim_{n\to\infty}\frac{n}{\pi_{A,x,P}(n)}\frac1n \sum_{k=1}^n( T^{P(k)}\mathbf{1}_{A})(x) \lambda^k\\
&=\begin{cases}1&\text{if $\lambda=1$},\\0&\text{otherwise},\end{cases}
\end{align*}
for almost all $x\in X$, meaning that $(k_n)$ is ergodic for almost all $x\in X$.
\end{ex}

\begin{ex}[Double return times sequences]
Let $(X,\mu,T)$ be a {standard} weakly mixing system and let $A,B\subseteq X$ be with $\mu(A),\mu(B)>0$. 
We show that the double return times sequence $(k_n)$ corresponding to $\{n\in\N:\, T^{n}x\in A,\ T^{2n}x\in B\}$ is for almost every $x$ ergodic and hence Wiener extremal and extremal.

\medskip\noindent 
By Bourgain \cite{B-double} the limit 
\[
\lim_{n\to\infty}
\frac1n \sum_{k=1}^n( T^{k}\mathbf{1}_{A})(x)(T^{2k}\mathbf{1}_{B})(x)
\]
exists almost everywhere. Moreover, for weakly mixing systems 
the above limit equals $\mu(A)\mu(B)$ a.e., see, e.g., \cite[Theorem 9.29]{EFHN}. By Assani, Duncan, Moore \cite[Theorem 2.3]{ADM} (or by a product construction), 
for almost every $x$, the limit
\[
\lim_{n\to\infty}\frac1n \sum_{k=1}^n( T^{k}\mathbf{1}_{A})(x)(T^{2k}\mathbf{1}_{B})(x)\lambda^n
\]
exists for each $\lambda\in \T$ and the Host--Kra factor $\mathcal{Z}_2$ is characteristic for such averages (meaning that only the projections of $\mathbf{1}_A$ and $\mathbf{1}_B$ onto this factor contribute to the limit). Since for weakly mixing systems all Host--Kra factors coincide with the fixed factor (see e.g.~Kra \cite[Sect.~6.1,7.3)]{Kra}), the above limit equals $\mu(A)\mu(B)$ for $\lambda=1$ and to zero otherwise. As before, this shows that the double return times sequence is ergodic for almost every $x\in X$
\end{ex}

For more ergodic sequences see Boshernitzan, Kolesnik, Quas, Wierdl \cite{BKQW}.
Note that since the pointwise convergence of weighted averages along primes $\aveN \lambda^n T^{p_n}$ is not yet studied, the return times sequences along primes of the form $\{n:\, T^{p_n}x\in A\}$ are currently out of reach.

\begin{ex}[An extremal sequence which is not Wiener extremal]

Consider the sequence $(k_n)$ defined by the following procedure. Take the sequence $(2n)$ and for $k$ belonging to a fixed subsequence of indices with density zero (e.g., the primes) insert $2k+1$ between $2k$ and $2k+2$.

Clearly, $(k_n)$ is good  with $c(1)=c(-1)=1$ and $c(\lambda)=0$ otherwise. Moreover, for $z\in\T$
\[
\limaveN |z^{k_n}-1|=0 \Longleftrightarrow \limaveN |z^{2n}-1|=0 \Longleftrightarrow z\in\{1,-1\},
\]
whereas $\lim_{n\to\infty}|z^{k_n}-1|=0$ is equivalent to $z=1$. Thus, by Theorems \ref{thm:extr-W} and \ref{thm:extr-W}, $(k_n)$ is extremal but not Wiener extremal. Note that an example of a Wiener extremal measure which is not Dirac is  $\mu$ given by   $\mu(\{1\})=\mu(\{-1\})=1/2$.
\end{ex}

\smallskip

We now go back to Wiener's lemma which in particular implies that a measure $\mu$ is continuous if and only if $\limaveN|\Hat \mu(n)|^2=0$. This motivates the following natural question concerning a characterization of another kind of extremality for subsequences.
\begin{question}
For which subsequences $(k_n)$ of $\N$ and for which 
continuous measures $\mu$ on $\T$ does
\begin{equation}\label{eq:cont}
\limaveN|\Hat \mu(k_n)|^2=0
\end{equation}
hold? For which sequences $(k_n)$ does \eqref{eq:cont} hold for every 
continuous measure? For which sequences $(k_n)$ does \eqref{eq:cont} characterize continuous measures $\mu$?
\end{question}

\begin{remark}
Property \eqref{eq:cont} characterizes continuous measures for ergodic sequences by Proposition \ref{prop:W-seq} (b).
\end{remark}

Note that by Theorem \ref{thm:count}, for sequences which are good with at most countable spectrum, \eqref{eq:cont} holds for all finite continuous measures. The following two examples show however that even for such sequences \eqref{eq:cont} does not characterize continuous measures in general.

\begin{ex}
Consider  $(k_n)$ with $k_n:=2n+1$, which is of course a good sequence with spectrum $\Lambda=\{-1,1\}$ and $c(-1)=-1$. Let
$\mu=\frac12(\delta_{1}+\delta_{-1})$. Then we obtain that
\[
\lim_{N\to\infty}\frac{1}{N}\sum_{n=1}^N|\Hat \mu(2n+1)|^2=0.
\]
\end{ex}

The following observation conjectures a connection between the two kinds of extremality.

\begin{remark}
Consider the following assertions about a sequence $(k_n)$:
\begin{iiv}
\item $(k_n)$ is Wiener extremal for discrete measures and $\frac
1N\sum_{n=1}^N|\Hat\mu(k_n)|^2\to 0$ as $N\to \infty$ for each
continuous measure $\mu$.
\item $(k_n)$  is Wiener extremal.
\item $(k_n)$ is Wiener extremal for discrete measures and $\frac
1N\sum_{n=1}^N|\Hat\mu(k_n)|^2\not\to 1$ as $N\to \infty$ for each
continuous measure $\mu$.
\item $(k_n)$ is Wiener extremal for discrete measures.
\end{iiv}
Then we have the implications (i) $\Rightarrow$ (ii) $\Rightarrow$
(iii) $\Rightarrow$ (iv).
Moreover, for good sequences we have also
(iv) $\Rightarrow$ (ii), i.e., the last three statements are
equivalent.
\begin{proof}
(i) $\Rightarrow$ (ii) follows immediately from the decomposition into the discrete and the continuous part and the triangle inequality (note that by the Koopman--von Neumann Lemma \ref{lem:KvN} we can remove the square in (i)), whereas the implications (ii) $\Rightarrow$(iii) $\Rightarrow$ (iv) are trivial. The last assertion is Theorem \ref{thm:extr-W}.
\end{proof}
\end{remark}

\smallskip
We now discuss connection to rigidity sequences.
Recall that for $\theta\in \T$ a sequence $(k_n)$ is
called a \emph{$\theta$-rigidity sequence} if there is a continuous
probability  measure $\mu$ on $\T$ with $\Hat\mu(k_n)\to \theta$ as
$n\to \infty$. Moreover, $1$-rigidity sequences are called \emph{rigidity sequences}. Note that, although for every $\theta\in\T$, $\theta$-rigid (along some subsequence) continuous measures are typical in the Baire category sense in all probability measures, see Nadkarni \cite{N-book}, to check whether a given sequence $(k_n)$ is rigid or $\theta$-rigid is often a challenge. For more details on such sequences, their properties,  examples and connections to ergodic and operator theory we refer to Nadkarni \cite[Ch.{} 7]{N-book}, Eisner, Grivaux  \cite{EG}, Bergelson, del Junco, Lema\'nczyk, Rosenblatt, \cite{BdJLR}, Aaronson, Hosseini, Lema\'nczyk \cite{AHL}, Grivaux \cite{G13},   Fayad, Kanigowski \cite{FK}, and \cite[Section 4.3]{E-book}.

Theorem \ref{thm:count} (a)  and Corollary \ref{cor:coset} imply in particular a possibly unexpected necessary property of rigidity sequences.
\begin{prop}\label{prop:rigid}
\begin{abc}
\item Suppose the sequence $(k_n)$ is such that there exists a continuous
measure $\mu$ on $\T$ with
\[
\limsup_{N\to\infty}\frac1N\sum_{n=1}^N|\Hat \mu(k_n)|^2>0.
\]
Then either $(k_n)$ is not good, or good with uncountable spectrum.

\item
$\theta$-rigidity sequences are not good.
\end{abc}
\end{prop}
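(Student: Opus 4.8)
The plan is to deduce both parts directly from results already in hand, namely Theorem \ref{thm:count}(a) for part (a) and Corollary \ref{cor:coset} for part (b); no genuinely new argument appears to be needed, and the work is essentially logical bookkeeping.

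For part (a) I would argue by contraposition. The negation of the asserted conclusion ``$(k_n)$ is not good, or good with uncountable spectrum'' is precisely ``$(k_n)$ is good with at most countable spectrum'': indeed, negating the disjunction gives ``good'' together with ``not (good with uncountable spectrum)'', which under the ``good'' hypothesis reduces to ``at most countable spectrum''. Assuming this, Theorem \ref{thm:count}(a) applies and yields, for \emph{every} continuous complex Borel measure $\mu$ on $\T$, that $\limaveN|\Hat\mu(k_n)|^2=0$; in particular the limit superior is $0$ for every such $\mu$. This contradicts the hypothesis that some continuous $\mu$ has strictly positive limit superior, and part (a) follows.

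For part (b), let $(k_n)$ be a $\theta$-rigidity sequence, witnessed by a continuous probability measure $\mu$ with $\Hat\mu(k_n)\to\theta$ where $\theta\in\T$, so $|\theta|=1$. Then $|\Hat\mu(k_n)|^2\to|\theta|^2=1$, and since convergent sequences have Ces\`aro averages with the same limit, $\limaveN|\Hat\mu(k_n)|^2=1$; in particular the limit superior equals $1$. Here I would invoke Corollary \ref{cor:coset} \emph{directly} (rather than routing through part (a)): if $(k_n)$ were good, then a limit superior equal to $1$ would force $\mu$ to be discrete, contradicting that $\mu$ is a (nonzero) continuous measure. Hence $(k_n)$ cannot be good.

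I do not expect a real obstacle in either part. The only two points requiring care are, in part (a), correctly identifying that the negation of the stated dichotomy is exactly the hypothesis ``good with at most countable spectrum'' of Theorem \ref{thm:count}(a), and, in part (b), using $|\theta|=1$ to pass from rigidity to a Ces\`aro limit equal to $1$, which is what makes Corollary \ref{cor:coset} applicable and delivers the desired contradiction with continuity.
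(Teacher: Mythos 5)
Your proof is correct and follows exactly the route the paper indicates: the paper gives no explicit proof but states that Proposition \ref{prop:rigid} follows from Theorem \ref{thm:count}(a) and Corollary \ref{cor:coset}, which is precisely the deduction you carry out. Both your contrapositive reading in part (a) and your use of $|\theta|=1$ together with Corollary \ref{cor:coset} in part (b) are sound.
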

For a consequence for prime numbers, polynomials and polynomials of primes see Proposition \ref{prop:rigid-pol-prim} below.

\begin{remark}
The fact that rigidity sequences cannot be good  with countable spectrum was proved by Bergelson, del Junco, Lema\'nczyk, Rosenblatt \cite[Prop. 2.22]{BdJLR}. Their argument works equally for $\theta$-rigidity 
sequences.
\end{remark}

\begin{ex}
The sequence $(2^n)$ is a rigidity sequence, see Eisner, Grivaux \cite{EG} and  Bergelson, del Junco, Lema\'nczyk, Rosenblatt \cite{BdJLR}, and, as every lacunary sequence,  is not good for the
mean ergodic theorem, see Rosenblatt, Wierdl \cite[Section II.3]{RW}. More examples are sequences satisfying $k_n|k_{n+1}$  or $\lim_{n\to\infty}k_{n+1}/k_n= \infty$, although $\lim_{n\to\infty}k_{n+1}/k_n= 1$ is possible, for details see the two above mentioned papers, \cite{BdJLR} and \cite{EG}.
\end{ex}

 We finally briefly discuss strongly sweeping out sequences. One says that a subsequence $(k_n)_{n\in\N}\subset \N$ has the \emph{strong sweeping out property} if for every aperiodic measure-preserving dynamical system $(X,\mu,T)$ and every $\varepsilon>0$ there exists a set $B\subset X$ with $\mu(B)<\varepsilon$ such that 
 $$
 \liminf_{N\to\infty} \aveN T^{k_n} 1_B=0 \text{ a.e.  and } \limsup_{N\to\infty} \aveN T^{k_n} 1_B=1 \text{ a.e..}
 $$
 In particular, ergodic averages along such sequences diverge a.e.. Strongly sweeping out sequences were discovered by Bellow and have been studied and generalized by many authors, see, e.g.,
 Akcoglu, Bellow, Jones, Losert, Reinhold-Larsson, Wierdl \cite{Akcoglu-etal},
 Losert \cite{Lo},
 Akcoglu, Jones \cite{AJ}, 
 Akcoglu, Jones, Rosenblatt \cite{AJR}, 
 Jones \cite{Jo}, 
 Rosenblatt, Wierdl \cite[Chapter V]{RW}.

In \cite[Theorem 1.1]{Akcoglu-etal}, the authors give a sufficient condition for a sequence to have the strong sweeping out property. By Proposition \ref{prop:c=1}, this condition is never valid for good sequences. Thus all examples presented in \cite{Akcoglu-etal} as a corollary to \cite[Theorem 1.1]{Akcoglu-etal}  are examples of bad sequences, such as lacunary sequences, the sequence $(2^i 3^j)$ (which is non-lacunary by, e.g., \cite{Bosh}) or, for example, sequences of the form $(a_0+a_1d^{n_1}+\ldots a_kd^{n_k})$, where $(a_j)_{j\in\N}\subset \N$, $d\in \N$ and $(n_k)_{k\in\N}\subset \N$ with $\lim_{k\to\infty} n_k=\infty$. (This is in particular another way to see that lacunary sequences are not good.) 

\medskip

On the other hand, the strongly sweeping out sequence produced in Theorem 2.10 of \cite{AJ} is good and even ergodic. In particular, it cannot be a 
$\theta$-rigidity sequence.

\medskip

Let us mention that in the opposite direction, any $\theta$-rigidity sequence 
is strongly sweeping out. 

\begin{prop}
$\theta$-rigidity sequences are strongly sweeping out.
\end{prop}
\begin{proof}
Let $(k_n)$ be a $\theta$-rigidity sequence for some $\theta\in\T$. Then there exists a continuous probability measure $\mu$ on $\T$ such that $\hat\mu(k_n)\underset{n\to \infty}\longrightarrow \theta$. 
Denote 
$$
u_N(\lambda):=\frac1N\sum_{n=1}^N(1-\Re (\ol{\theta}\lambda^{k_n}))
$$
 for all $N\in\N$ and $\lambda\in\T$. The above implies $\lim_{N\to\infty}\int_\T u_N\dd\mu=0$. Since each function $u_N$ is nonnegative, we have convergence of $(u_N)_{N\in\N}$ to $0$ in $\Ell^1(\T,\mu)$. Hence, there exists a subsequence $(N_\ell)_{\ell\in \N}$ of $\N$ such that $u_{N_\ell}\underset {\ell \to \infty}\longrightarrow 0$ $\mu$-a.e.

In particular, 
$$
\frac1{N_\ell}\sum_{n=1}^{N_\ell} 
\lambda^{k_n}
\underset{\ell\to \infty} \longrightarrow \theta\qquad \mbox{for $\mu$-a.e. $\lambda\in\T$}.
$$

Since $\mu$ is continuous, applying Theorem 1.1 of \cite{Akcoglu-etal} (see the remark after it), we infer that for any invertible, bi-measurable and ergodic dynamical system $(X,\Sigma,\nu,T)$ and any $\varepsilon>0$, there exists $A\in \Sigma$ with $\nu(A)<\varepsilon$ and 
$\limsup_{\ell}\frac1{N_\ell}\sum_{n=1}^{N_\ell} \one_A(T^{k_n}(x))=1$ and 
$\liminf_{\ell}\frac1{N_\ell}\sum_{n=1}^{N_\ell} \one_A(T^{k_n}(x))=0$ for $\nu$-a.e. $x$.
\end{proof}

Generally, it is a new research direction to study deeper connections between strongly sweeping out sequences, rigidity sequences, good sequences and Wiener extremal sequences which we will not pursue here further.


\section{Wiener's Lemma along polynomials and primes}\label{sec:pol-primes}

In this section we consider arithmetic sequences such as values of
polynomials, primes and polynomials on primes, inspired by ergodic theorems along such sequences by Bourgain, Wierdl, and Nair, see \cite{B86,B88,B,W,W-diss, N,N1}. Note that all these sequences have density zero (if the degree of the polynomial is greater or equal to two).

The following lemma is classical, see  Vinogradov \cite{V-book}, Hua \cite{H-book}, Rhin \cite{Rh}, and Rosenblatt, Wierdl \cite[Section II.2]{RW}. We present here a quick way to derive it for polynomials of primes from a recent powerful result of Green and Tao \cite[Prop.~10.2]{GT10} on the orthogonality of the modified von Mangoldt function to nilsequences. 

\begin{lemma}\label{lemma:limit-pol-prim}
Let $k_n=P(n)$, $n\in\N$, or $k_n=P(p_n)$, $n\in\N$, where $P$ is an
integer polynomial and $p_n$ denotes the $n^\text{th}$  prime. Then
$c(\lambda)=0$ for every irrational $\lambda\in\T$ (not a root of unity).
\end{lemma}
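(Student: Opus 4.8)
The plan is to write $\lambda=\rme^{2\pi\rmi\alpha}$ with $\alpha$ irrational, to put $d=\deg P$, and to treat the two cases separately. I may assume $d\geq 1$ and that $P$ has positive leading coefficient $a_d\in\Z$, since otherwise $(k_n)$ is not a genuine subsequence of $\N$. In both cases the quantity $c(\lambda)$ is the limit of an exponential sum,
\[
\aveN\rme^{2\pi\rmi\alpha P(n)}\quad\text{respectively}\quad\aveN\rme^{2\pi\rmi\alpha P(p_n)},
\]
and the goal is to show that this limit is $0$.

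For $k_n=P(n)$ this is exactly Weyl's equidistribution theorem for polynomials. The real polynomial $\alpha P(\cdot)$ has leading coefficient $\alpha a_d$, which is irrational because $a_d$ is a nonzero integer and $\alpha\notin\Q$; hence the sequence $(\alpha P(n)\bmod 1)_n$ is equidistributed in $[0,1)$. Testing this against the continuous function $t\mapsto\rme^{2\pi\rmi t}$ gives $\limaveN\rme^{2\pi\rmi\alpha P(n)}=\int_0^1\rme^{2\pi\rmi t}\,\dd t=0$, so $c(\lambda)=0$.

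For $k_n=P(p_n)$ I would first convert the $n$-indexed average into a von Mangoldt--weighted sum over integers. Writing $x=p_N$ so that $N=\pi(x)$, one has $\aveN\rme^{2\pi\rmi\alpha P(p_n)}=\pi(x)^{-1}\sum_{p\leq x}\rme^{2\pi\rmi\alpha P(p)}$, and Abel summation together with the prime number theorem reduces the claim to
\[
\frac1x\sum_{m\leq x}\Lambda(m)\,\rme^{2\pi\rmi\alpha P(m)}\longrightarrow 0,
\]
where $\Lambda$ denotes the von Mangoldt function; the prime-power terms of exponent $\geq 2$ contribute only $O(\sqrt{x}\,\log x)$ and are negligible. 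To control this sum I would invoke the orthogonality estimate \cite[Prop.~10.2]{GT10}. After the usual $W$-trick, restricting to a residue class $m\equiv b\pmod W$ with $(b,W)=1$ turns $\rme^{2\pi\rmi\alpha P(m)}$ into a polynomial phase in the new variable whose leading coefficient is still irrational; this is an instance of an equidistributed polynomial nilsequence $F(g(\cdot)\Gamma)$ with $F$ Lipschitz. Proposition 10.2 then gives that the $W$-tricked modified von Mangoldt function $\Lambda'_{b,W}$ satisfies $\frac1M\sum_{n\leq M}(\Lambda'_{b,W}(n)-1)\,F(g(n)\Gamma)=o(1)$; since the plain average $\frac1M\sum_{n\leq M}F(g(n)\Gamma)$ also tends to $0$ by equidistribution of the irrational polynomial phase, the correlation $\frac1M\sum_{n\leq M}\Lambda'_{b,W}(n)\,F(g(n)\Gamma)$ is $o(1)$. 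Summing over the $\phi(W)$ admissible classes $b$ and undoing the normalizations yields the displayed convergence, hence $c(\lambda)=0$.

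The hard part will be the polynomial-of-primes case, and within it two things: (i) the analytic-number-theory bookkeeping that passes from the average over $n$ to the normalized von Mangoldt sum while keeping the error terms under control, and (ii) verifying the hypotheses of \cite{GT10}, namely exhibiting $\rme^{2\pi\rmi\alpha P(\cdot)}$ as a Lipschitz function of an equidistributed polynomial orbit on a nilmanifold and checking that irrationality of $\alpha$ (hence equidistribution) survives both the $W$-trick and the passage to residue classes. By contrast the polynomial case is immediate. A purely classical substitute for step (ii), avoiding nilsequences altogether, is the Vinogradov--Hua method for exponential sums over primes, cf.\ \cite{V-book,H-book,Rh}.
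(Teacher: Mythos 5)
Your proposal is correct and follows essentially the same route as the paper: transference of the prime average to a von Mangoldt--weighted sum, the $W$-trick, Green--Tao's Proposition 10.2 applied to the polynomial phase viewed as a Lipschitz nilsequence, and Weyl equidistribution of $\lambda^{P(Wn+r)}$ for irrational $\lambda$ to kill the main term. The only difference is cosmetic: you spell out the Abel-summation bookkeeping and the polynomial case, which the paper delegates to \cite[Lemma 3.1]{E} and to the classical references, respectively.
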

\begin{proof}[Proof (for polynomials of primes).]
Let
\[
\Lambda'(n):=\begin{cases}
\log n, &\quad \text{if } n\text{ is prime},
\\
0, &\quad \text{otherwise},
\end{cases}
\]
let $\omega\in\N$ and let
\[
W=W_\omega:=\prod_{p \text{ prime},\:p\leq\omega}p.
\]
For $r<W$ coprime to $W$ consider the modified $\Lambda'$-function
\[
\Lambda'_{r,\omega}(n):=\frac{\varphi(W)}{W}\Lambda'(Wn+r), \quad n\in\N,
\]
for the Euler totient function $\varphi$. Let now $P$ be an integer
polynomial and $b_n:=\Lambda'(n)\lambda^{P(n)}$. Since 
$(\lambda^{P(n)})$ can be represented as a Lipschitz nilsequence for a connected, simply connected Lie group, see Green, Tao, Ziegler \cite[Appendix C]{GTZ}, it follows from Green and Tao \cite[Prop.~10.2]{GT10}, see \cite[Lemma 3.2 (b), Cor.~2.2]{E}, that
\begin{align}\label{eq:GT}
\limaveN b_n &= \lim_{\omega\to\infty}\frac{1}{W} \sum_{r<W,\: (r,W)=1} \lim_{N\to\infty} \aveN  b_{Wn+r} \notag\\
&=\lim_{\omega\to\infty}\frac{1}{\varphi(W)} \sum_{r<W,\: (r,W)=1} \lim_{N\to\infty} \aveN  \Lambda'_{r,\omega}(n) \lambda^{P(Wn+r)}\notag\\
&= \lim_{\omega\to\infty}\frac{1}{\varphi(W)} \sum_{r<W,\: (r,W)=1} \lim_{N\to\infty} \aveN  \lambda^{P(Wn+r)}.
\end{align}

Since $P(W \cdot +r)$ for $0<r<W$ are
again integer polynomials and $\lambda$ is irrational (not a root of unity), the sequences
$( \lambda^{P(Wn+r)})_{n\in\N}$ are all equidistributed in $\T$, see
e.g.~Kuipers, Niederreiter \cite[Theorem 1.3.2]{KN}, and hence the right hand side of \eqref{eq:GT} equals zero.
By a classical equality, see e.g.~\cite[Lemma 3.1]{E},
\[
\limaveN b_n =\limaveN \Lambda'(n)\lambda^{P(n)} = \limaveN
\lambda^{P(p_n)}=c(\lambda)
\]
implying the assertion.
\end{proof}

Since for every $P\in\Z[x]$ and every $q\in\N$, the sequence $(P(n))$ is $q$-periodic modulo $q$, the following form of the limit function $c$ can be calculated for the above classes of arithmetic sequences. Here and later we denote
$e(x):=\ee^{2\pi i x}$ for $x\in\R$.
\begin{itemize}
\item For $(P(n))$
\begin{equation}\label{eq:limit-pol}
c(\lambda)=
\begin{cases}\frac{1}{q}\sum\limits_{r=1}^q e(P(r)b/q),\ & \lambda=e(b/q),\:(b,q)=1,\\
0, &\lambda \text{ irrational}.
\end{cases}
\end{equation}
\item
For $(P(p_n))$ the prime number theorem in arithmetic progressions, see e.g.~Davenport \cite{D-book}, implies
\begin{equation}\label{eq:limit-pol-prim}
c(\lambda)=\begin{cases}
\frac{1}{\varphi(q)}\sum\limits_{r\in\{1,\ldots,q\},(r,q)=1} e(P(r)b/q),\ &\lambda=e(b/q),\, (b,q)=1,\\
0, &\lambda \text{ not a root of unity}.
\end{cases}
\end{equation}
\end{itemize}
For more information on the limit in \eqref{eq:limit-pol}  see Kunszenti-Kov\'acs \cite{KK1}, Kunszenti-Kov\'acs, Nittka,  Sauter \cite{KK2}.

An immediate corollary of Lemma \ref{lemma:limit-pol-prim} together with Corollary \ref{cor:coset}
is the following variant of Wiener's lemma.

\begin{thm}[Wiener's lemma along polynomials and
primes]\label{thm:wiener-pol-prim}
Let $k_n=P(n)$ or $k_n=P(p_n)$ for $n\in\N$, where $P$ is a  non-constant
integer polynomial and $p_n$ denotes the $n^\text{th}$ prime. Then
every finite, positive measure $\mu$ on $\T$ satisfies
\begin{align}\label{eq:wiener-pol-prim}
\limaveN |\Hat{\mu}(k_n)|^2&=\sum_{a/q\in\Q} c(e(a/q)) \sum_{\lambda
\text{\atom}} \mu(\{\lambda\}) \mu(\{\lambda e(a/q)\})\\&
\notag
\leq
\sum_{\lambda\in U} \mu(\{\lambda e(\Q)\})^2,
\end{align}
where $U$ is a maximal set of rationally independent atoms of $\mu$.

Moreover, the equality in the above holds if and only if
$c(e(a/q))=1$ holds for every $a,q$ such that
$\lambda_1=e(a/q)\lambda_2$ for some atoms $\lambda_1, \lambda_2$ of
$\mu$.
\end{thm}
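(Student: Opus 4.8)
The theorem to prove is Theorem \ref{thm:wiener-pol-prim}. The strategy is to recognize that the arithmetic sequences $k_n = P(n)$ and $k_n = P(p_n)$ are \emph{good} sequences whose limit function $c$ is computed explicitly in \eqref{eq:limit-pol} and \eqref{eq:limit-pol-prim}, and whose spectrum $\Lambda$ is at most countable: indeed, by Lemma \ref{lemma:limit-pol-prim} we have $c(\lambda)=0$ for every irrational $\lambda$, so $\Lambda$ is contained in the countable set of roots of unity $\{e(a/q):a/q\in\Q\}$. This places us squarely in the setting of Theorem \ref{thm:count}, and the subgroup $\lrLambda$ generated by the spectrum is exactly the group of all roots of unity, so the cosets $a\lrLambda$ coincide with $a\,e(\Q)$ and two atoms are $\Lambda$-dependent precisely when they are \emph{rationally independent} in the sense of the statement.

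\textbf{The steps.} First I would verify that $(k_n)$ is good with countable spectrum contained in the roots of unity; this is immediate from Lemma \ref{lemma:limit-pol-prim} together with the $q$-periodicity of $P$ modulo $q$, which is exactly the content packaged into formulas \eqref{eq:limit-pol} and \eqref{eq:limit-pol-prim}. Second, I would apply Theorem \ref{thm:count}(a), which gives for every complex Borel measure
\[
\limaveN |\Hat{\mu}(k_n)|^2=\sum_{\lambda\in\Lambda} c(\lambda)\sum_{a\atom}\ol{\mu}(\{a\ol{\lambda}\})\mu(\{a\}).
\]
Re-indexing over $\lambda = e(a/q)$ in lowest terms and recalling that for the positive measure $\mu$ we have $\ol{\mu}=\mu$, this is precisely the equality \eqref{eq:wiener-pol-prim}. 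Third, the inequality in \eqref{eq:wiener-pol-prim} is exactly the inequality \eqref{eq:wiener-subgroup} of Theorem \ref{thm:count}(b), once we identify $\lrLambda$ with the roots of unity $e(\Q)$ and $U$ with a maximal set of rationally independent atoms. Finally, the equality characterization is read off directly from Theorem \ref{thm:count}(b): equality holds if and only if $\mu$ satisfies \eqref{eq:atoms}, namely $c(a\ol{b})=1$ for all atoms $a,b$, which in the present notation is the stated condition that $c(e(a/q))=1$ whenever $\lambda_1=e(a/q)\lambda_2$ for two atoms $\lambda_1,\lambda_2$.

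\textbf{The main obstacle.} The substantive mathematical input is entirely contained in Lemma \ref{lemma:limit-pol-prim}, whose proof for polynomials of primes relies on the deep Green--Tao orthogonality estimate; but since that lemma is already established, the proof of the theorem itself is essentially a matter of correct bookkeeping. The one point requiring genuine care is the identification of the relevant group and cosets: one must confirm that the subgroup generated by the spectrum is the full group of roots of unity (so that ``$\Lambda$-independent'' becomes ``rationally independent'' and $a\lrLambda$ becomes $a\,e(\Q)$), rather than some proper subgroup depending on which particular roots of unity actually lie in $\Lambda$. Here the key observation is that the inequality and its equality case in Theorem \ref{thm:count}(b) are stated in terms of $\lrLambda$, and enlarging the ambient group from $\lrLambda$ to all of $e(\Q)$ only coarsens the partition into cosets; so I would check that replacing $\lrLambda$ by $e(\Q)$ in the upper bound is legitimate, which it is because $c(\lambda)=0$ off the roots of unity forces any atom contributing to the sum to differ from another only by a root of unity. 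Modulo this verification, the theorem follows by direct specialization of Theorem \ref{thm:count}.
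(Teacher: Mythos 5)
Your proposal is correct and matches the paper's route exactly: the paper presents this theorem as an immediate consequence of Lemma \ref{lemma:limit-pol-prim} (countable spectrum inside the roots of unity) combined with the general subsequential Wiener lemma of Theorem \ref{thm:count}, which is precisely your specialization. You also rightly flag and resolve the only real subtlety, namely that the stated bound uses $e(\Q)$-cosets rather than $\lrLambda$-cosets (the latter can be a proper subgroup, e.g.\ for $P(n)=n$), and that passing to the coarser partition only enlarges the sum of squares.
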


\begin{remark}
For example, the equality in the inequality \eqref{eq:wiener-pol-prim}
holds if $\mu$ has no atoms or just one atom $1$, in which case the
limit equals $0$ or $\mu(\{1\})^2$, respectively.
\end{remark}

\begin{thm}\label{thm:pol-primes-extr}
Let $P\in \Z[x]$ be a polynomial.
\begin{abc}
\item  The sequence $(P(n))$ is Wiener extremal
$\Longleftrightarrow$
it is extremal 
$\Longleftrightarrow$
for all $q\geq 2$ there is an $r$ with $P(r)\neq 0$ mod $q$.
\item  The sequence $(p_n)$ of primes is Wiener extremal and hence extremal, too.
\item The sequence $(P(p_n))$ is Wiener extremal 
$\Longleftrightarrow$
it is extremal 
$\Longleftrightarrow$
for all $q\geq 2$ there is an $r$ coprime to $q$ with
$P(r)\neq 0$ mod $q$.
\end{abc}
\end{thm}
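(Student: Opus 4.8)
The plan is to prove all three parts by reducing to the characterizations of extremality and Wiener extremality established in Section \ref{sec:extr}, and then computing the relevant root-of-unity condition using the explicit formulas \eqref{eq:limit-pol} and \eqref{eq:limit-pol-prim} for the limit function $c$. The key structural observation is that for each of these arithmetic sequences one can show extremality and Wiener extremality coincide, so the two arrows in each biconditional can be handled together.

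First I would treat part (a). Since $(P(n))$ is good with the explicit $c$ given by \eqref{eq:limit-pol}, I invoke Theorem \ref{thm:extr-W} and Theorem \ref{thm:extr}: for a good sequence, Wiener extremality is equivalent to Wiener extremality for discrete measures, which by Theorem \ref{thm:extr-W}(iv) is equivalent to ``$c(\lambda)=1 \Rightarrow \lambda=1$''; similarly extremality reduces via Theorem \ref{thm:extr}(iii) to ``$G_\infty((P(n)))=\{1\}$''. The main computational step is to show both conditions are equivalent to the stated arithmetic criterion. For a root of unity $\lambda=e(b/q)$ with $(b,q)=1$ and $q\geq 2$, the condition $\lim_{n\to\infty}\lambda^{P(n)}=1$ means $q\mid P(n)$ for all large $n$, hence (by $q$-periodicity of $P$ mod $q$) for all $n$, i.e. $P(r)\equiv 0 \pmod q$ for every $r$. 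So $G_\infty=\{1\}$ fails exactly when some $q\geq 2$ has $P\equiv 0 \pmod q$ identically. For the Wiener (density) version one checks via \eqref{eq:limit-pol} that $c(e(b/q))=1$ forces $e(P(r)b/q)=1$ for all $r$, again giving $q\mid P(r)$ for all $r$; so the same arithmetic condition governs both, proving the triple equivalence.

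For part (c) the argument is parallel but uses \eqref{eq:limit-pol-prim} instead: because the prime number theorem in arithmetic progressions restricts the relevant residues $r$ to those coprime to $q$, the averaged exponential sum $\frac{1}{\varphi(q)}\sum_{(r,q)=1}e(P(r)b/q)$ equals $1$ precisely when $q\mid P(r)$ for every $r$ coprime to $q$. By Lemma \ref{lemma:limit-pol-prim} the sequence $(P(p_n))$ is good with $c$ vanishing off the roots of unity, so again Theorems \ref{thm:extr-W} and \ref{thm:extr} reduce everything to the root-of-unity condition, which now becomes ``for all $q\geq 2$ there is $r$ coprime to $q$ with $P(r)\not\equiv 0 \pmod q$.'' I expect the main subtlety here to be the classical-limit (extremality) direction: I must verify that $\lim_{n\to\infty} e(b/q)^{P(p_n)}=1$ is equivalent to the same divisibility condition, which follows since the primes $p_n$ eventually avoid the finitely many residues sharing a factor with $q$ and are equidistributed among the coprime residues, so $P(p_n) \bmod q$ realizes every value $P(r)$ with $(r,q)=1$ infinitely often.

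Part (b) is then the clean special case $P(x)=x$ of part (c): the condition ``for all $q\geq 2$ there is $r$ coprime to $q$ with $r\not\equiv 0 \pmod q$'' is automatic, since any $r$ coprime to $q\geq 2$ satisfies $r\not\equiv 0$. Hence $(p_n)$ is Wiener extremal, and extremal a fortiori by Remark \ref{rem:erg}-type reasoning (or directly from the equivalence in (c)). The main obstacle across the whole proof is not the operator-theoretic machinery but the careful bookkeeping in identifying when the normalized Gauss-type sums $\frac{1}{q}\sum_r e(P(r)b/q)$ or $\frac{1}{\varphi(q)}\sum_{(r,q)=1}e(P(r)b/q)$ attain modulus-one value $1$; the point is that a convex combination of unit vectors equals $1$ only when every contributing term equals $1$, which converts the analytic extremality condition into the purely arithmetic divisibility statement. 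Everything else is a direct appeal to the already-established characterizations.
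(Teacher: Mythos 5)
Your proposal is correct and follows essentially the same route as the paper: both reduce the Wiener-extremality equivalences to the condition $c(\lambda)=1\Rightarrow\lambda=1$ of Theorem \ref{thm:extr-W} via the explicit formulas \eqref{eq:limit-pol} and \eqref{eq:limit-pol-prim}, and handle the extremality direction through the root-of-unity condition (the paper does this by exhibiting the explicit non-Dirac measure $\tfrac12(\delta_{e(1/q)}+\delta_{e(-1/q)})$ when the arithmetic criterion fails, which is just the witness for your $G_\infty((k_n))\neq\{1\}$ via Theorem \ref{thm:extr}). Your observation that a convex combination of unimodular numbers equals $1$ only if every term does, and the use of Dirichlet's theorem to realize each coprime residue class by infinitely many primes, are exactly the computations implicit in the paper's appeal to those formulas.
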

\begin{proof}
Assertion (b) and the equivalence of Wiener extremality with the number theoretic characterization in (a) and (c)  follow directly from the formulas for the limit functions \eqref{eq:limit-pol}, \eqref{eq:limit-pol-prim} and Theorem \ref{thm:extr-W}.

Since Wiener extremality implies extremality, it remains to show that in (a) and (c) extemality implies the number theoretic characterization. For (a), assume that there exists $q\in\{2,3,\ldots\}$ such that $P(r)\in q\N$ for all $r\in\N$. Consider the measure $\mu:=(\delta_{e(1/q)}+\delta_{e(-1/q)})/2$. Then 
\[
\hat{\mu}(P(r))=(e(P(r)/q)+e(-P(r)/q))/2=1\quad \text{for every $r$},
\]
so $(P(n))$ is not extremal. The corresponding implication in (c) follows analogously.
\end{proof}

\begin{ex}\label{ex:limit-pol-prim}
We now show that for sequences of the
form $k_n=P(p_n)$, where $p_n$ denotes the $n^\text{th}$ prime, there are
discrete measures $\mu$ which are not Dirac with $\limaveN
|\Hat{\mu}(k_n)|^2=1$. Indeed, let $P$ be some integer polynomial
such that there exists $q\in\N$ with the property that $P(r)=0$ mod $q$ for
every $r\in\N$ coprime to $q$. Then we have in particular
$P(p)/q\in\N$ for every prime $p$ and therefore
\[
c(a/q)=\limaveN e( P(p_n)a/q)=1
\]
for every $a\in\{1,\ldots,q\}$. Thus, the limit of the Wiener
averages along such $(k_n)$ is $1$ for $\mu$ being discrete with $a^q=1$
for every atom $a\in\T$.

 By Theorem \ref{thm:pol-primes-extr} the sequences $(n^2)$ and $(p_n^2)$ are both Wiener extremal.  Note, however, that  $(n^2+5)$ is Wiener extremal, while $(p_n^2+5)$ is trivially not.
More examples are listed below.
\end{ex}

In the following examples we apply the criteria from the previous theorem to various polynomials.
\begin{ex}
Consider the polynomial $P(x)=x^2+a$, $a\in \N$. Then for any $a$ the sequence $(P(n))$ is extremal (for $q\geq 2$ take $r=0$ or $r=1$ in the characterization). We claim that $(P(p_n))$ is extremal if and only if each prime divisor of $a+1$ is greater than $3$. If $2|a+1$, then $P(p_n)$ are all even so that the sequence cannot be extremal. If $3|a+1$, then take $q=3$. $P(1)\equiv a+1\equiv 0$ mod $q$, and $P(2)\equiv a+1\equiv 0$ mod $q$. Hence $(P(p_n))$ cannot be extremal.

\medskip\noindent Suppose the prime divisors of $a+1$ are all different form $2,3$.  Let $q\geq2 $ be arbitrary. If $q\not |a+1=P(1)$, then $r=1$ is a good choice for the criteria in Theorem \ref{thm:pol-primes-extr}. If $q|a+1$, then $P(2)=3+a+1$, so that if $q|P(2)$, then $q|3$ would follow, which is impossible.

\medskip\noindent A concrete examples are $(n^2+4)$ and $(p_n^2+4)$ both being extremal, while $(n^2+2)$ is extremal and $(p_n^2+2)$ is not.
\end{ex}

\begin{ex}
Consider the polynomial $P(x)=x^k+2$. Then for any $k\in \N$, $k\geq 1$ the sequence $(P(n))$ is extremal. We claim that $(P(p_n))$ is extremal if and only if $k$ is odd.
Indeed,  $3|P(2)$ iff $k$ is even. (For $q$ take $r=1$, unless $q=3$ in which case take $r=2$.)
\end{ex}

\begin{ex}
Consider the polynomial $P(x)=ax+b$. The sequence $(an+b)$ is extremal iff $(a,b)=1$. The sequence  $(ap_n+b)$ is extremal iff $a+b\not\equiv 0
$ mod $2$ and $(a,b)=1$. Indeed, that $a,b$ are relatively prime is clearly necessary, while the choice $q=2$ in Theorem \ref{thm:pol-primes-extr} shows that the other condition needs to be fulfilled if   $(ap_n+b)$ is extremal. Conversely, suppose that both conditions hold, and let $q\geq 2$ be arbitrary. If $q\not|a+b$, then we are done with $r=1$. If $q|a+b$, then $q$ is odd, i.e., $(q,2)=1$ and  $P(2)=2a+b\not\equiv 0$ mod $q$, because otherwise $q|a$ and $q|b$, a contradiction.
\end{ex}

Finally, a direct consequence of Proposition \ref{prop:rigid} is the following.
\begin{prop}\label{prop:rigid-pol-prim}
\begin{abc}
\item Let   $P\in \Z[x]$ be as in Theorem \ref{thm:pol-primes-extr} {\upshape (a)}. Then   $(P(n))$ is not a $\theta$-rigidity sequence for any $\theta\in\T$.
\item $(p_n)$ is not a $\theta$-rigidity sequence for any $\theta\in\T$.
\item Let   $P\in \Z[x]$ be as in Theorem \ref{thm:pol-primes-extr} {\upshape (b)}. Then $(P(p_n))$ is not a $\theta$-rigidity sequence for any
$\theta\in \T$.
\end{abc}
\noindent More precisely,  for $(k_n)$ any of the preceding sequences and for
every continuous measure $\mu$ on $\T$ we have
\begin{equation}\label{eq:wiener0}
\limaveN|\Hat \mu(k_n)|^2=0.
\end{equation}
\end{prop}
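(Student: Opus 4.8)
The plan is to derive Proposition \ref{prop:rigid-pol-prim} as a direct consequence of the machinery already assembled in the excerpt, treating the three cases (a), (b), (c) in a uniform way via Proposition \ref{prop:rigid} and the explicit extremality results of Theorem \ref{thm:pol-primes-extr}. First I would record that each of the sequences under consideration, $(P(n))$, $(p_n)$, and $(P(p_n))$, is a \emph{good} sequence: this follows from Lemma \ref{lemma:limit-pol-prim} together with the explicit formulas \eqref{eq:limit-pol} and \eqref{eq:limit-pol-prim}, which show that $c(\lambda)$ exists for every $\lambda\in\T$ (it is given by a Gauss-type sum at rational $\lambda$ and vanishes at irrational $\lambda$). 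Moreover, these formulas exhibit $c$ as supported on the roots of unity, so the spectrum $\Lambda=\{\lambda:c(\lambda)\neq 0\}$ is \emph{at most countable}. Having a good sequence with countable spectrum is precisely the hypothesis under which Proposition \ref{prop:rigid} bites.

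The non-rigidity assertion is then immediate from Proposition \ref{prop:rigid}(b): since each sequence is good, it cannot be a $\theta$-rigidity sequence for any $\theta\in\T$. This disposes of the displayed statements (a), (b), (c) at once, with the extremality hypotheses on $P$ in parts (a) and (c) not even needed for the rigidity conclusion itself (they are inherited from the numbering in Theorem \ref{thm:pol-primes-extr} but the goodness, which is what matters, holds for every integer polynomial). It is worth flagging in the writeup that the hypotheses in (a) and (c) merely identify which polynomial classes are under discussion; the key structural fact is goodness with countable spectrum.

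For the sharper, quantitative conclusion \eqref{eq:wiener0}, I would invoke the \emph{``In particular''} clause of Theorem \ref{thm:count}(a): for a good sequence with at most countable spectrum, every continuous complex Borel measure $\mu$ satisfies $\limaveN|\Hat\mu(k_n)|^2=0$. Since we have just verified that all three sequences are good with countable spectrum, this applies verbatim and yields \eqref{eq:wiener0}. One could equally route this through Corollary \ref{cor:coset}, noting that \eqref{eq:extr-seq} would force $\mu$ to be discrete, contradicting continuity; but the cleaner path is the direct vanishing statement in Theorem \ref{thm:count}(a), which simultaneously reproves part (b) of Proposition \ref{prop:rigid} in this setting.

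I expect no genuine obstacle here: all the analytic work is already done in Lemma \ref{lemma:limit-pol-prim}, Theorem \ref{thm:count}, and Proposition \ref{prop:rigid}. The only point requiring minor care is the clean verification that the spectrum is at most countable --- this rests entirely on the fact that $c(\lambda)=0$ for every irrational $\lambda$, which is exactly the content of Lemma \ref{lemma:limit-pol-prim}, so $\Lambda$ is contained in the countable set of roots of unity. Once that observation is in place, the proposition is a one-line deduction from the cited results, and the proof reduces to correctly matching each sequence to the hypotheses of Proposition \ref{prop:rigid} and Theorem \ref{thm:count}(a).
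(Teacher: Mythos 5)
Your proposal is correct and follows exactly the route the paper intends: the paper states this proposition as a direct consequence of Proposition \ref{prop:rigid}, with goodness and countability of the spectrum supplied by Lemma \ref{lemma:limit-pol-prim} and the formulas \eqref{eq:limit-pol}, \eqref{eq:limit-pol-prim}, and with \eqref{eq:wiener0} coming from Theorem \ref{thm:count}(a). Your side remark that the extremality hypotheses on $P$ are not actually needed for the conclusion is also accurate.
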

Note that it has been observed by Eisner, Grivaux \cite{EG} and Bergelson, del Junco, Lema\'nczyk, Rosenblatt \cite{BdJLR} that $(P(n))$ and $(p_n)$ are not
rigidity sequences, but for $(P(p_n))$ this property seems to be new.

\begin{ex}
Consider the sequence $(p_n)$ of primes. We have seen that $(p_n)$ is good with $\Lambda\subseteq e(\Q)$ and $c(-1)=-1$. Let
$\mu=\frac12(\delta_{1}+\delta_{-1})$. Then
we have
\begin{align*}
&\sum_{b/q\in\Q} c(e(b/q))\sum_{\text{$a$ \atom}} \mu(\{e(b/q)a\})\mu(\{a\})\\
&=\mu(\{1\})^2+\mu(\{-1\})^2+2\Re c(-1)\mu(\{1\})\mu(\{-1\})=(1/2-1/2)^2=0.
\end{align*}
So $\mu$ is a discrete measure with
\[
\lim_{N\to\infty}\frac{1}{N}\sum_{n=1}^N|\Hat \mu(p_n)|^2=0.
\]
This shows that for the sequence $(k_n)=(p_n)$ the validity of \eqref{eq:wiener0} for a probability measure $\mu$ does not imply that $\mu$ is continuous.
\end{ex}


\section{Orbits of operators and operator semigroups}\label{sec:operators}
In this section we study orbits of power bounded linear operators along subsequences and obtain generalizations and extensions of some results of Goldstein and Nagy.

\begin{thm}\label{thm:linop}
Let $(k_n)$ be a good subsequence in $\N$ with at most countable spectrum,
and let $T\in \LLL(H)$ be a linear contraction on the Hilbert
space $H$. Then  for any $x,y\in H$
\begin{align*}
\lim_{N\to\infty}\frac1N\sum_{n=1}^N|{\sprod{T^{k_n}x}{y}}|^2=\sum_{\lambda\in\T}
 c(\lambda)\sum_{a\in \T} \sprod {P_{a\ol{\lambda}}x}{y}\sprod
{P_{a}y}{x},
\end{align*}
where $P_\alpha$ denotes the orthogonal projection onto $\ker(\alpha-T)$.
\end{thm}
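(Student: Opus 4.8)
The plan is to reduce the contraction $T$ to a unitary operator and then feed a scalar spectral measure into the abstract Wiener lemma along subsequences, Theorem \ref{thm:count} (a). Concretely, I would pass to the minimal unitary dilation $U$ of $T$ on a Hilbert space $K\supseteq H$, characterized by $T^m=P_HU^m|_H$ for all $m\geq 0$, where $P_H$ is the orthogonal projection of $K$ onto $H$. Since $k_n\geq 1$, this gives $\sprod{T^{k_n}x}{y}=\sprod{P_HU^{k_n}x}{y}=\sprod{U^{k_n}x}{y}$ for all $x,y\in H$, so it suffices to evaluate $\limaveN|\sprod{U^{k_n}x}{y}|^2$ for the unitary $U$. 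This step is the exact analogue of the standard reduction used to deduce Theorem \ref{abstwiener} from Theorem \ref{thm:Wiener}.

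By the spectral theorem write $U=\int_\T z\,\dd E(z)$ and attach to $x,y\in H$ the complex Borel measure $\mu_{x,y}(\cdot):=\sprod{E(\cdot)x}{y}$ on $\T$, whose total variation is at most $\|x\|\,\|y\|$. Then $\sprod{U^{k_n}x}{y}$ is a Fourier coefficient of $\mu_{x,y}$ (the inner-product convention fixing the sign of the index, i.e.\ whether one works with $\mu_{x,y}$ or with $\ol{\mu_{x,y}}$), so that $|\sprod{U^{k_n}x}{y}|^2=|\Hat{\nu}(k_n)|^2$ for the appropriate $\nu\in\{\mu_{x,y},\ol{\mu_{x,y}}\}$. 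Applying Theorem \ref{thm:count} (a) to $\nu$ — legitimate precisely because $(k_n)$ is good with at most countable spectrum $\Lambda$ — gives
\[
\limaveN|\sprod{U^{k_n}x}{y}|^2=\sum_{\lambda\in\Lambda}c(\lambda)\sum_{a}\ol{\nu}(\{a\ol{\lambda}\})\,\nu(\{a\}),
\]
with the series converging by that theorem. It then remains only to identify the atomic masses: using $\mu_{x,y}(\{a\})=\sprod{E(\{a\})x}{y}=\sprod{P_ax}{y}$, and hence $\ol{\mu_{x,y}}(\{a\})=\sprod{P_ay}{x}$, a termwise substitution turns the right-hand side into the asserted double sum over $\lambda,a\in\T$. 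As a sanity check, for an ergodic sequence $c=\one_{\{1\}}$ only $\lambda=1$ survives and the formula collapses to $\sum_a|\sprod{P_ax}{y}|^2$, recovering Theorem \ref{abstwiener}.

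The main obstacle is exactly the identification $E(\{a\})|_H=P_a$ for $a\in\T$, i.e.\ the claim that the unimodular point spectrum of the minimal unitary dilation coincides with that of $T$ and that the corresponding eigenvectors already lie in $H$; I would isolate this as a lemma. The inclusion $\ker(a-T)\subseteq\ker(a-U)$ is elementary: for $|a|=1$ and $Tx=ax$ one has $\|x\|=\|Tx\|=\|P_HUx\|\leq\|Ux\|=\|x\|$, forcing $Ux=P_HUx=ax$. The reverse inclusion $\ker(a-U)\subseteq H$ is the substantive point and is a standard fact of Sz.-Nagy--Foia\c{s} dilation theory; equivalently, the completely non-unitary part of $T$ has no unimodular eigenvalues and its minimal unitary dilation has absolutely continuous spectrum, so $E(\{a\})$ sees only the unitary part.

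Alternatively, and perhaps cleaner for isolating this obstacle, one can avoid the dilation altogether by invoking the canonical decomposition $H=H_u\oplus H_{\mathrm{cnu}}$ into the unitary and completely non-unitary parts of $T$. Writing $x=x_u+x_c$, $y=y_u+y_c$ and using that the two summands reduce $T$, one gets $\sprod{T^{k_n}x}{y}=\sprod{T_u^{k_n}x_u}{y_u}+\sprod{T_{\mathrm{cnu}}^{k_n}x_c}{y_c}$. The unitary part $T_u$ is handled by the spectral theorem exactly as above, with $E_u(\{a\})=P_a$ since $H_{\mathrm{cnu}}$ carries no unimodular eigenvectors. The completely non-unitary part contributes $\limaveN|\sprod{T_{\mathrm{cnu}}^{k_n}x_c}{y_c}|^2=0$, because the spectral measure of its minimal unitary dilation is continuous and Theorem \ref{thm:count} (a) gives $0$ for continuous measures; and the mixed terms vanish by Cauchy--Schwarz, since one factor has averaged squared modulus tending to $0$ while the other stays bounded. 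Either route delivers the stated identity.
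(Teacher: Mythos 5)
Your proposal is correct, and your second (``alternative'') route --- decomposing $H=H_{\uni}\oplus H_{\cnu}$, killing the completely non-unitary and cross terms, and feeding the scalar spectral measure of the unitary part into Theorem \ref{thm:count}~(a) --- is essentially the paper's own proof, the only cosmetic difference being that the paper disposes of the $H_{\cnu}$ contribution via Foguel's weak stability of completely non-unitary contractions rather than via the absolute continuity of the dilation's spectral measure. Your first route through the minimal unitary dilation is a correct equivalent variant, and you rightly isolate $E(\{a\})|_H=P_a$ as the one point needing Sz.-Nagy--Foia\c{s} theory.
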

Note that by assumption, the summands above are non-zero for at most countably many $\lambda$ and $a$.

\medskip Let $T$ be a linear contraction on a Hilbert space $H$.
We decompose $H$ orthogonally as $H=H_\uni\oplus H_\cnu$ into unitary
and completely non-unitary subspaces both of them being $T$, $T^*$-invariant,
see Sz.-Nagy, Foia{\c{s}} \cite{SzNagyFoiasIV}, or \cite[App.{} D]{EFHN}. The corresponding orthogonal projections are denoted by $P_\uni$ and $P_\cnu$, respectively. By  a result of Foguel \cite{Foguel} the operator $T$ is weakly stable on $H_\cnu$, i.e., $T^n\to 0$ in the weak operator topology (see also \cite[App.{} D]{EFHN} and \cite[Section II.3.2]{E-book}).
\begin{proof}[Proof of Theorem \ref{thm:linop}.]
 We apply the spectral theorem on $H_\uni$ and
obtain the scalar spectral measures $\mu_{x,y}$ on $\T$:
\[
\sprod{T^kx}y=\int_{\T} z^k \dd\mu_{x,y}=\Hat\mu_{x,y}(k)\quad \text{for
all $k\in \Z$, $x,y\in H_\uni$}.
\]
We have
\[
\sprod{T^kx}y=\sprod{T^kP_{\uni}x}{P_{\uni}y}+\sprod{T^k(I-P_{\uni})x}{(I-P_{\uni})y}.
\]
This implies
\begin{align*}
\sum_{n=1}^N|{\sprod{T^{k_n}x}{y}}|^2&=\sum_{n=1}^N|{\sprod{T^{k_n}P_{\uni}x}{P_{\uni}y}}|^2\\
&\quad+\sum_{n=1}^N|{\sprod{T^{k_n}(I-P_{\uni})x}{(I-P_{\uni})y}}|^2\\
&\quad+2\Re\sum_{n=1}^N{\sprod{T^{k_n}P_{\uni}x}{P_{\uni}y}}\sprod{(I-P_{\uni})y} {T^{k_n}(I-P_{\uni})x}.
\end{align*}
Since for any $w\in H_\cnu$ we have $T^kw\to 0$ weakly as $k\to
\infty$, we conclude
\begin{align*}
&\lim_{N\to\infty}\frac1N\sum_{n=1}^N|{\sprod{T^{k_n}x}{y}}|^2=\lim_{N\to\infty}\frac1N\sum_{n=1}^N|{\sprod{T^{k_n}P_{\uni}x}{P_{\uni}y}}|^2\\
&\qquad+\lim_{N\to\infty}\frac1N\sum_{n=1}^N|{\sprod{T^{k_n}(I-P_{\uni})x}{(I-P_{\uni})y}}|^2\\
&\qquad+\lim_{N\to\infty}\frac1N 2\Re\sum_{n=1}^N{\sprod{T^{k_n}P_{\uni}x}{P_{\uni}y}}\sprod{(I-P_{\uni})y}{T^{k_n}(I-P_{\uni})x}\\
&\quad=\lim_{N\to\infty}\frac1N\sum_{n=1}^N|{\sprod{T^{k_n}P_{\uni}x}{P_{\uni}y}}|^2\\
&\quad=\lim_{N\to\infty}\frac1N\sum_{n=1}^N |\Hat\mu_{P_\uni x,P_\uni y}(k_n)|^2\\
&\quad= \sum_{\lambda\in\T}  c(\lambda)\sum_{a \atom}\mu_{P_\uni x,P_\uni y}(\{a\ol{\lambda}\}) \mu_{P_\uni
y,P_\uni x}(\{a\})\\
&\quad=\sum_{\lambda\in\T}  c(\lambda)\sum_{a\in \T} \sprod
{P_{a\ol{\lambda}}x}{y}\sprod {P_{a}y}{x},
\end{align*}
and that was to be proven.
\end{proof}

Next, we need the following easy lemma.
\begin{lemma}\label{lemma:Hilbert-contr}
Let $T$ be a contraction on a Hilbert space $H$, let $x\in H$ and $(k_n)$ be a subsequence in $\N$ such that 
\[
\lim_{n\to\infty}|\sprod{T^{k_n}x}{x}|=\|x\|^2.
\]
Then there is some sequence $(\lambda_n)$ in $\T$ with  $\lim_{n\to\infty}\lambda_n T^{k_n}x=x$. In particular, one has 
\[
\lim_{n\to\infty}|\sprod{T^{k_n}x}{y}|=|\sprod{x}{y}|\quad\text{for all $ y\in H$}.
\] 
\end{lemma}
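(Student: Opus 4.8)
The plan is to deduce strong convergence $\lambda_n T^{k_n}x\to x$ for a suitable sequence of phases $\lambda_n\in\T$ purely from the scalar hypothesis, and then to read off the ``in particular'' statement by continuity of the inner product. If $x=0$ everything is trivial, so I assume $x\neq 0$ throughout.

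First I would upgrade the hypothesis to a norm statement. Since $T$ is a contraction we have $\|T^{k_n}x\|\le\|x\|$, so Cauchy--Schwarz gives
\[
\|x\|^2=\lim_{n\to\infty}|\sprod{T^{k_n}x}{x}|\le\limsup_{n\to\infty}\|T^{k_n}x\|\,\|x\|\le\|x\|^2,
\]
forcing $\|T^{k_n}x\|\to\|x\|$. This is the decisive step: it converts an inner-product asymptotic into control of the norms, which is exactly what makes the subsequent expansion collapse.

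Next I would align the phases. For all large $n$ one has $\sprod{T^{k_n}x}{x}\neq0$, since its modulus tends to $\|x\|^2>0$, so I choose $\lambda_n\in\T$ with $\lambda_n\sprod{T^{k_n}x}{x}=|\sprod{T^{k_n}x}{x}|$, setting $\lambda_n=1$ for the finitely many remaining indices. Expanding the norm gives
\[
\|\lambda_n T^{k_n}x-x\|^2=\|T^{k_n}x\|^2-2\Re\bigl(\lambda_n\sprod{T^{k_n}x}{x}\bigr)+\|x\|^2,
\]
where $\|T^{k_n}x\|^2\to\|x\|^2$ by the previous step, while $\Re(\lambda_n\sprod{T^{k_n}x}{x})=|\sprod{T^{k_n}x}{x}|\to\|x\|^2$ by the choice of $\lambda_n$. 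Hence the right-hand side tends to $0$ and $\lambda_n T^{k_n}x\to x$. Then, for each fixed $y\in H$, continuity of the inner product together with $|\lambda_n|=1$ yields
\[
|\sprod{T^{k_n}x}{y}|=|\sprod{\lambda_n T^{k_n}x}{y}|\longrightarrow|\sprod{x}{y}|.
\]

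I do not expect a genuine obstacle here, as the argument is elementary. The only point worth flagging is that the phases $\lambda_n$ must be allowed to depend on $n$ and cannot in general be removed, which is precisely why both the hypothesis and the conclusion are phrased in terms of $|\cdot|$ rather than the inner products themselves.
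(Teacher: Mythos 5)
Your proof is correct and follows essentially the same route as the paper: choose unimodular phases $\lambda_n$ aligning $\sprod{T^{k_n}x}{x}$ with its modulus, expand $\|\lambda_n T^{k_n}x-x\|^2$, and use the contraction property to make the expansion collapse. The paper simply bounds $\|T^{k_n}x\|^2\le\|x\|^2$ inside the expansion rather than first establishing $\|T^{k_n}x\|\to\|x\|$, but this is only a cosmetic difference.
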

\begin{proof}
For every $n\in\N$, let $\lambda_n\in\T$ be such that $\lambda_n\sprod{T^{k_n}x}{x}=|\sprod{T^{k_n}x}{x}|$. Since $T$ is a contraction, we have
\[
0\leq \|\lambda_n T^{k_n}x-x\|^2\leq 
2\|x\|^2 - 2\Re(\lambda_n\sprod{T^{k_n}x}{x})=2\|x\|^2 - 2|\sprod{T^{k_n}x}{x}|
\] 
and the right hand side converges to zero by assumption. Thus  
\[
\lim_{n\to\infty}\|\lambda_n T^{k_n}x- x\|=0
\]
holds, and the assertion follows.
\end{proof}

The following proposition characterizes (Wiener) extremal sequences for discrete measures via extremal properties of 
Hilbert space contractions or of general power bounded Banach space operators with relatively compact orbits.

\begin{remark}\label{rem:kronecker}
Let $E$ be a Banach space and $T\in\LLL(E)$ be a power bounded operator on $E$.
Recall that the set of all vectors with relatively compact orbits include eigenvectors and is a closed linear subspace of $E$, see, e.g. Engel, Nagel \cite[Proposition{} A.4]{EN} or \cite[Ch.{} 16]{EFHN}. Thus every vector from the Kronecker (or reversible) subspace $E_r:=\ol{\lin}\{x\in E:\, Tx=\lambda x \text{ for some }\lambda\in\T\}$ has relatively compact orbits.
On the other hand, every $x$ with  relatively compact orbit can be decomposed as $x=x_r+x_s$, where $x_r\in E_r$ and $x_s$ is strongly stable, i.e., satisfies $\lim_{n\to\infty}\|T^n x_s\|=0$, see e.g., \cite[Thm. I.1.16]{E-book}. (This decomposition is usually called the \emph{Jacobs--de Leeuw--Glicksberg decomposition}.)
\end{remark}
\begin{prop}\label{prop:orbits-discr-meas}
Let $(k_n)$ be a subsequence in $\N$.
\begin{abc}
\item 
The following assertions are equivalent:
\begin{iiv}
\item $(k_n)$ is Wiener extremal for discrete measures.

\item For every Hilbert space $H$, every contraction $T\in \LLL(H)$ 
and every $x\in H\setminus\{0\}$  with relatively compact orbit
\begin{equation}\label{eq:relcomp-H}
\limaveN |\sprod{T^{k_n}x}x|^2=\|x\|^4
\end{equation}
implies that $x$ is an eigenvector to a unimodular eigenvalue. 

\item For every power bounded operator $T$ on a Banach space $E$ and every $x\in E\setminus\{0\}$ with relatively compact orbit,
\begin{equation}\label{eq:relcomp-E}
\limaveN |\dprod{T^{k_n}x}{x'}|^2=|\dprod{x}{x'}|^2 \quad\text{for all }x'\in E'
\end{equation}
implies that $x$ is an eigenvector to a unimodular eigenvalue.
\end{iiv}
 \item 
The analogous equivalence holds for extremal sequences  for discrete measures when \eqref{eq:relcomp-H} is replaced by
 \[
\lim_{n\to\infty}|\sprod{T^{k_n}x}x|=\|x\|^2
\]
and \eqref{eq:relcomp-E} is replaced by 
\begin{equation*}
\lim_{n\to\infty}|\dprod{T^{k_n}x}{x'}|=|\dprod{x}{x'}| \quad\text{for all }x'\in E'.
\end{equation*}
 \end{abc}
\end{prop}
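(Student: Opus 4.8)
The plan is to establish the equivalences in (a) through the cycle (i)$\Rightarrow$(iii)$\Rightarrow$(ii)$\Rightarrow$(i), and to obtain (b) by the same scheme with Ces\`aro limits replaced by ordinary limits. Throughout I would use the scalar characterizations of (Wiener) extremality for discrete measures from Theorem \ref{thm:extr-W} (resp. Theorem \ref{thm:extr}): condition (i) holds precisely when $\Dlim_{n\to\infty}z^{k_n}=1$ (resp. $z^{k_n}\to 1$) forces $z=1$.

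For (ii)$\Rightarrow$(i) I would use the multiplication model: given a discrete probability measure $\mu$ with $\limaveN|\Hat\mu(k_n)|^2=1$, take $H=\Ell^2(\T,\mu)$, let $T$ be multiplication by the identity character (a unitary contraction) and $x=\one$. Then $\sprod{T^k x}{x}=\Hat\mu(k)$ and $\|x\|=1$, so the hypothesis of (ii) holds; since $\mu$ is discrete, $x$ lies in the closed span of the eigenvectors $\one_{\{a\}}$ and hence has relatively compact orbit. Thus (ii) makes $x$ an eigenvector, forcing $\mu$ to be Dirac. For (iii)$\Rightarrow$(ii) I would upgrade the one-functional hypothesis of (ii) to the all-functional hypothesis of (iii): from $\limaveN|\sprod{T^{k_n}x}x|^2=\|x\|^4$ together with $|\sprod{T^{k_n}x}x|\le\|x\|^2$, Lemma \ref{lem:KvN}(b) gives $|\sprod{T^{k_n}x}x|\to\|x\|^2$ in density; along this density-one set Lemma \ref{lemma:Hilbert-contr} provides $\lambda_n T^{k_n}x\to x$, whence $|\sprod{T^{k_n}x}y|\to|\sprod{x}y|$ in density and therefore $\limaveN|\sprod{T^{k_n}x}y|^2=|\sprod{x}y|^2$ for every $y$ by Lemma \ref{lem:KvN}(a). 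Now (iii) applies and delivers the eigenvector. (In (b) this step is immediate from Lemma \ref{lemma:Hilbert-contr} alone.)

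The core is (i)$\Rightarrow$(iii). On the closed $T$-invariant subspace $E_x:=\ol{\lin}\{T^n x:\,n\ge 0\}$ every vector has relatively compact orbit (Remark \ref{rem:kronecker}), so the mean ergodic projections $P_\lambda:=\lim_N\aveN\ol\lambda^{\,n}T^n$ exist strongly on $E_x$, are uniformly bounded, and satisfy $P_\lambda T=\lambda P_\lambda$ and $P_\lambda T^{k}=\lambda^{k}P_\lambda$. Writing $x=x_r+x_s$ for the Jacobs--de Leeuw--Glicksberg decomposition, one has $P_\lambda x_s=0$ and $T^{k_n}x_s\to 0$, while $x_r=\sum_{\lambda\in\Sigma}P_\lambda x$ over the at most countable set $\Sigma:=\{\lambda:\,P_\lambda x\neq 0\}$. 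The argument then splits according to $|\Sigma|$. If $\Sigma=\emptyset$ then $x=x_s$ is strongly stable, so $\dprod{T^{k_n}x}{x'}\to 0$ and the extremality forces $\dprod{x}{x'}=0$ for all $x'$, i.e. $x=0$, which is excluded. If there are two distinct $\lambda_1,\lambda_2\in\Sigma$, I would test the extremality against the functional obtained (via Hahn--Banach from $E_x$ to $E$) from $P_{\lambda_1}^*\psi_1+P_{\lambda_2}^*\psi_2$ with $\dprod{P_{\lambda_i}x}{\psi_i}=1$; since $P_{\lambda_i}T^{k_n}=\lambda_i^{k_n}P_{\lambda_i}$ and $P_{\lambda_i}$ annihilates both $x_s$ and the other components, this gives exactly $\dprod{T^{k_n}x}{x'}=\lambda_1^{k_n}+\lambda_2^{k_n}$ and $\dprod{x}{x'}=2$. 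Expanding $\limaveN|\lambda_1^{k_n}+\lambda_2^{k_n}|^2=4$ yields $\limaveN\Re\big((\lambda_1\ol{\lambda_2})^{k_n}\big)=1$, hence $\Dlim_{n\to\infty}(\lambda_1\ol{\lambda_2})^{k_n}=1$ with $\lambda_1\ol{\lambda_2}\neq 1$, contradicting (i) via Theorem \ref{thm:extr-W}. Finally, if $\Sigma=\{\lambda_1\}$ then $x=P_{\lambda_1}x+x_s$, and testing against arbitrary $x'$ gives $|\dprod{P_{\lambda_1}x}{x'}|=|\dprod{P_{\lambda_1}x}{x'}+\dprod{x_s}{x'}|$ for all $x'$; as a nonzero strongly stable $x_s$ is linearly independent of the eigenvector $P_{\lambda_1}x$, a Hahn--Banach choice of $x'$ makes this fail unless $x_s=0$, so $x=P_{\lambda_1}x$ is the desired eigenvector.

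I expect the main obstacle to be precisely this implication (i)$\Rightarrow$(iii): setting up the Jacobs--de Leeuw--Glicksberg / almost-periodic spectral calculus on $E_x$ (existence and algebraic properties of the projections $P_\lambda$, countability of $\Sigma$, and the series representation of $x_r$), and then building functionals that simultaneously isolate two prescribed spectral components and annihilate all remaining components together with the stable part. The other implications reduce, via Lemma \ref{lemma:Hilbert-contr} and the Koopman--von Neumann Lemma \ref{lem:KvN}, to the scalar/measure situation already handled, and part (b) follows by replacing density convergence with ordinary convergence throughout.
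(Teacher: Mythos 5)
Your proposal is correct and follows essentially the same route as the paper's proof: (ii)$\Rightarrow$(i) via the multiplication operator on $\Ell^2(\T,\mu)$ applied to $\one$, (i)$\Rightarrow$(iii) via the Jacobs--de Leeuw--Glicksberg decomposition, the mean ergodic projections $P_\lambda$ and a Hahn--Banach functional that reduces matters to the scalar criterion of Theorem \ref{thm:extr-W} (resp.\ Theorem \ref{thm:extr} for part (b)), and (iii)$\Rightarrow$(ii) via Lemma \ref{lemma:Hilbert-contr} combined with the Koopman--von Neumann Lemma \ref{lem:KvN}. The only differences are cosmetic: the paper eliminates the stable part $x_s$ at the outset by testing the hypothesis against $P_s'x'$ rather than through your case analysis on $\Sigma$, and your norm-convergent expansion $x_r=\sum_{\lambda\in\Sigma}P_\lambda x$ is stronger than what is needed (and not automatic in a general Banach space) --- only the standard fact that $x_r=0$ whenever all $P_\lambda x_r$ vanish is actually used.
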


\begin{proof} 
(a)  To show (ii) $\Rightarrow$ (i), let $\mu$ be a discrete probability measure with
\[
\limaveN |\hat{\mu}(k_n)|^2=1.
\] 
Consider the Hilbert space $\Ell^2(\T,\mu)$ and the multiplication operator $T$ on it given by $(Tf)(z)=zf(z)$. The assumption on $\mu$ implies
\[
\limaveN |\sprod{T^{k_n}\one}\one|^2=\limaveN |\hat{\mu}(k_n)|^2=1=\|\one\|^4.
\]
Moreover, by $\one=\sum_{a \atom}\one_{\{a\}}$, $T\one_{\{a\}}=a\one_{\{a\}}$ and by Remark \ref{rem:kronecker} $\one$ has relatively compact orbits.

By (ii), there exists $\lambda\in\T$ with $T\one=\lambda\one$, hence $\sum_{a \atom }a\one_{\{a\}}=\sum_{a\atom}\lambda\one_{\{a\}}$. This implies $\one=\one_{\{\lambda\}}$, i.e., $\mu$ is Dirac.

\medskip\noindent (i) $\Rightarrow$ (iii): We may assume without loss of generality that $E$ is spanned by the orbit of $x$ under $T$. By Remark \ref{rem:kronecker},
$T$ has relatively compact orbits on $E$.

Let $\lambda\in \T$. Since $\ol{\lambda}T$ has relatively compact orbits as well, the mean ergodic projections 
\[
P_\lambda := \limaveN \ol{\lambda}^nT^n
\]
onto $\ker(\lambda-T)$ 
exist, where the limit is understood strongly, see, e.g., \cite[Section I.2.1]{E-book} or \cite[Ch.{} 16]{EFHN}. Note that these projections are bounded and commute with $T$.

By Remark \ref{rem:kronecker} (see \cite[Theorem I.1.20]{E-book}, or \cite[Ch.{} 16]{EFHN} for details) $E$ can be decomposed into the reversible part $E_r$ and the stable part $E_s$, both being $T$-invariant, where
\begin{align*}
E_r&=\ol{\lin}\{y\in E:\, Ty=\lambda y \text{ for some }\lambda\in\T\},\\
E_s&=\{y\in E:\,\lim_{n\to\infty}\|T^ny\|=0\}.
\end{align*}
Denote by $P_r$ and $P_s$ the corresponding projections (which commute with $T$) and decompose $x=x_r+x_s$ with $x_r\in E_r$ and $x_s\in E_s$. We have for every $x'\in E'$
\begin{align*}
\aveN |\dprod{ T^{k_n}x_s}{x'}|^2&=
\aveN |\dprod {P_sT^{k_n}x} {x'}|^2=\aveN |\dprod{ T^{k_n}x}{P'_sx'}|^2\\
&\to|\dprod{ x}{P'_sx'}|^2=|\dprod{ x_s}{x'}|^2
\end{align*}
as $N\to\infty$, which together with the strong stability of $T$ on $E_s$ implies $x_s=0$. So that $x=x_r\in E_r$.

Next we argue similarly to the proof of Lemma 6  in Goldstein, Nagy \cite{G3}.  Let $\lambda,\mu\in\T$ be such that $P_\lambda x\neq 0$ and $P_\mu x\neq 0.$ By the Hahn--Banach theorem there exists $z'\in E'$ such that $\dprod{ P_\lambda x}{z'}\neq 0$ and $\dprod {P_\lambda x-P_\mu x}{z'} =0$, i.e., $\dprod{ P_\lambda x}{z'}=\dprod{P_\mu x}{z'}\neq 0$. Observe
\begin{align*}
\aveN |\lambda^{{k_n}}\dprod{ P_\lambda x}{z'}+ \mu^{{k_n}} \dprod { P_\mu x}{z'}|^2
&=\aveN |\dprod{ T^{{k_n}} P_\lambda x+ T^{{k_n}} P_\mu x}{z' }|^2\\
&=\aveN |\dprod { T^{{k_n}} x}{(P'_\lambda+P'_\mu)z }|^2\\
&\to |\dprod{  x}{(P'_\lambda+P'_\mu)z'}|^2=|\dprod{ P_\lambda x}{z'}+\dprod{ P_\mu x}{z' }|^2
\end{align*}
or, equivalently,
\[
\aveN |(\lambda \ol{\mu})^{k_n}+1|^2\to 4
\]
as $N\to\infty$. By Theorem \ref{thm:extr-W} we have $\lambda=\mu$, and therefore $x$ is an eigenvector to a unimodular eigenvalue.

\medskip\noindent (iii) $\Rightarrow$ (ii) follows from Lemma \ref{lemma:Hilbert-contr}.

\medskip\noindent The proof of (b) is, with obvious modifications, the same as for part (a). In particular, one has to apply Theorem \ref{thm:extr} in place of Theorem \ref{thm:extr-W}.
\end{proof}

 For certain sequences including the primes and sequences with positive upper density we have the following extension to power bounded operators on Banach spaces.

\begin{thm}\label{thm:bddgapsextr}
Let $(k_n)$ be  extremal for discrete measures with $\liminf_{n\to \infty}(k_{n+1}-k_n)<\infty$.
Then for every bounded operator $T$ on a Banach space $E$ and every $x\in E\setminus \{0\}$, 
\begin{equation}\label{conv-hyp}
\lim_{n\to\infty} |\dprod{ T^{k_n} x}{x'} |=|\dprod{ x}{x'}| 
\quad \text{ for all }  x'\in E'
\end{equation}
implies that $x$ is an eigenvector with unimodular eigenvalue.
\end{thm}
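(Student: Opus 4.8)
The plan is to leverage the bounded-gap condition to upgrade the asymptotic hypothesis \eqref{conv-hyp} into an \emph{exact} algebraic identity for a fixed power $T^d$, and then to use extremality to collapse the resulting finite-dimensional eigenstructure onto a single unimodular eigenvalue. Since $\liminf_{n\to\infty}(k_{n+1}-k_n)<\infty$, exactly as in the extraction \eqref{eq:k_n} in the proof of Proposition \ref{p:bddgapsextr} there is an integer $d\geq 1$ and a subsequence $(n_\ell)$ with $k_{n_\ell+1}-k_{n_\ell}=d$ for all $\ell$. Writing $y_n:=T^{k_n}x$, this yields the key relation $y_{n_\ell+1}=T^{d}y_{n_\ell}$ along the subsequence.

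The heart of the argument, and the step I expect to be the main obstacle, is to deduce from this that $|\dprod{T^dx}{x'}|=|\dprod{x}{x'}|$ for \emph{every} $x'\in E'$. First I would fix $x'$ and apply \eqref{conv-hyp} to the functional $(T')^dx'$, where $T'$ denotes the dual operator; since $\dprod{T^dy_n}{x'}=\dprod{y_n}{(T')^dx'}$, this gives $|\dprod{T^dy_n}{x'}|\to|\dprod{T^dx}{x'}|$ as $n\to\infty$. Restricting to the subsequence $(n_\ell)$ and using $T^dy_{n_\ell}=y_{n_\ell+1}$ together with \eqref{conv-hyp} for the index $n_\ell+1$, I obtain $|\dprod{y_{n_\ell+1}}{x'}|\to|\dprod{x}{x'}|$ as well; comparing the two limits forces $|\dprod{T^dx}{x'}|=|\dprod{x}{x'}|$. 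As $x\neq 0$, the functionals annihilating $x$ and those annihilating $T^dx$ then coincide, so a Hahn--Banach (bipolar) argument gives $T^dx\in\lin\{x\}$, and the norm equality pins this down to $T^dx=\omega x$ for some $\omega\in\T$.

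Once $T^dx=\omega x$ is in hand, the orbit of $x$ is confined to the finite-dimensional $T$-invariant subspace $V:=\lin\{x,Tx,\dots,T^{d-1}x\}$, on which $(T|_V)^d=\omega\,I$. Because $z^d-\omega$ has $d$ distinct (unimodular) roots, $T|_V$ is diagonalizable with unimodular eigenvalues, so I can write $x=\sum_{j=1}^m v_j$ with $Tv_j=\lambda_j v_j$, the $\lambda_j\in\T$ distinct and the $v_j\neq0$ linearly independent. It then remains to show that $m=1$.

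For the final step I would feed this decomposition into the extremality hypothesis. Choosing, via Hahn--Banach on the finite-dimensional $V$, a functional $x'\in E'$ with $\dprod{v_j}{x'}=c_j$ for prescribed weights $c_j>0$ satisfying $\sum_{j=1}^m c_j=1$, one has $\dprod{x}{x'}=1$ and $\dprod{T^{k_n}x}{x'}=\sum_{j=1}^m c_j\lambda_j^{k_n}=\Hat\mu(k_n)$ for the discrete probability measure $\mu:=\sum_{j=1}^m c_j\delta_{\lambda_j}$. Hypothesis \eqref{conv-hyp} now reads $|\Hat\mu(k_n)|\to1$, so $\mu$ is extremal along $(k_n)$; since $(k_n)$ is extremal for discrete measures, $\mu$ must be Dirac, forcing $m=1$ and hence $Tx=\lambda_1 x$ with $\lambda_1\in\T$, as required.
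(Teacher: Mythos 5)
Your argument is correct, and its first half is exactly the paper's: extract $d$ and $(n_\ell)$ with $k_{n_\ell+1}-k_{n_\ell}=d$, test \eqref{conv-hyp} against $(T')^dx'$ to get $|\dprod{T^dx}{x'}|=|\dprod{x}{x'}|$ for all $x'\in E'$, and conclude $T^dx=\omega x$ with $\omega\in\T$ by Hahn--Banach. Where you diverge is the finish. The paper observes at this point that $T$ is power bounded on the finite-dimensional invariant subspace $\ol{\lin}\{T^jx:j=0,\dots,d-1\}$, so $x$ has relatively compact orbit, and then simply invokes Proposition \ref{prop:orbits-discr-meas}\,(b), whose proof runs through the Jacobs--de Leeuw--Glicksberg decomposition, mean ergodic projections $P_\lambda$, and a pairwise Goldstein--Nagy functional argument. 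You instead exploit that $(T|_V)^d=\omega\,\Id$ on $V=\lin\{x,Tx,\dots,T^{d-1}x\}$ forces $T|_V$ to be diagonalizable with unimodular eigenvalues, decompose $x=\sum_{j=1}^m v_j$ into eigenvectors, and test against a single functional equalizing the components, so that \eqref{conv-hyp} says precisely that the discrete probability measure $\sum_j c_j\delta_{\lambda_j}$ is extremal along $(k_n)$ in the sense of Definition \ref{def:extr}; extremality for discrete measures then kills all but one term. Your route is more self-contained and elementary — in finite dimensions the JdLG machinery trivializes, and your single test functional handles all eigenvalues at once rather than pairwise — while the paper's route reuses an already-established general proposition and so is shorter on the page. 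Both are complete proofs.
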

\begin{proof}
Let $E,T,x$ be as in the statement. By assumption there exists an integer 
$d\in \N$ and a subsequence $(n_\ell)$ such that 
$k_{n_\ell+1}-k_{n_\ell}=d$ for all $\ell\in \N$.
Applying \eqref{conv-hyp} with $(T')^dx'$, we infer that for every $x'\in E'$,
$\lim_{n\to\infty} |\dprod{ T^{k_n+d} x}{x'} |=|\dprod{ T^d x}{x'}|$. 
By the property of $(n_\ell)$ and by the assumption we deduce $|\dprod{ T^dx}{x'} |
=|\dprod{ x}{x'}|$ for every $x'\in E'$. By the Hahn--Banach theorem, there exists $\lambda\in \C$, such that $T^dx=\lambda x$. Moreover, we must have 
$|\lambda | =1$. This implies that $T$ is power bounded on the closed invariant subspace $E_0:={\lin}\{T^jx:j=0,\dots,d-1\}$ and that $x$ has relatively compact orbit. Proposition \ref{prop:orbits-discr-meas} (b) finishes the proof.
\end{proof}

\begin{cor}\label{cor:primesgelfand}
Denote by $p_n$ the $n^\text{th}$ prime.  For every bounded operator
 $T$ on a Banach space $E$ and every $x\in E\setminus \{0\}$, 
\begin{equation*}
\lim_{n\to\infty} |\dprod{ T^{p_n} x}{x'} |=|\dprod{ x}{x'}| 
\quad for\,  every\,  x'\in E'
\end{equation*}
implies that $x$ is an eigenvector with unimodular eigenvalue.
 As a consequence, $\lim_{n\to\infty}T^{p_n}= \Id$ in the weak operator topology implies $T=\Id$.
\end{cor}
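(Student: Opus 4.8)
The plan is to derive both statements directly from Theorem \ref{thm:bddgapsextr}, whose hypotheses the primes happen to satisfy; no genuinely new argument is needed beyond checking these hypotheses and then running a short linear-algebra reduction.

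First I would confirm that $(k_n)=(p_n)$ meets the assumptions of Theorem \ref{thm:bddgapsextr}. By Theorem \ref{thm:pol-primes-extr} (b) the primes are extremal, and extremality implies extremality for discrete measures via the implication (i)$\Rightarrow$(ii) of Theorem \ref{thm:extr}. Moreover, Zhang's theorem provides $\liminf_{n\to\infty}(p_{n+1}-p_n)<\infty$. Hence Theorem \ref{thm:bddgapsextr} applies verbatim and yields the first assertion: whenever $T$ is bounded, $x\neq 0$, and $\lim_{n\to\infty}|\dprod{T^{p_n}x}{x'}|=|\dprod{x}{x'}|$ for all $x'\in E'$, the vector $x$ is an eigenvector of $T$ with unimodular eigenvalue.

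For the consequence, I would assume $T^{p_n}\to\Id$ in the weak operator topology, so that $\dprod{T^{p_n}x}{x'}\to\dprod{x}{x'}$, and in particular $|\dprod{T^{p_n}x}{x'}|\to|\dprod{x}{x'}|$, for all $x\in E$ and $x'\in E'$. Applying the first assertion to each nonzero $x$, I conclude that every nonzero vector is an eigenvector of $T$ with some unimodular eigenvalue. A standard computation upgrades this to scalarity: for linearly independent $x,y$, equating $T(x+y)=\lambda_{x+y}(x+y)$ with $Tx+Ty=\lambda_x x+\lambda_y y$ forces $\lambda_x=\lambda_y=\lambda_{x+y}$, so there is a single $\lambda\in\T$ with $T=\lambda\Id$ (the cases $\dim E\leq 1$ being trivial).

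It then remains to identify $\lambda$. From $T=\lambda\Id$ together with $T^{p_n}\to\Id$ weakly one reads off $\lambda^{p_n}\to 1$, i.e., $\lambda\in G_\infty((p_n))$. Since $(p_n)$ is extremal, the implication (i)$\Rightarrow$(iii) of Theorem \ref{thm:extr} gives $G_\infty((p_n))=\{1\}$, whence $\lambda=1$ and $T=\Id$. I do not expect any real obstacle here: the analytic and number-theoretic depth is entirely absorbed into Theorem \ref{thm:bddgapsextr} and into the facts that the primes are extremal (through the prime number theorem in arithmetic progressions) and have bounded gaps (Zhang). The only step requiring a moment's care is the elementary passage from ``every vector is an eigenvector'' to ``$T$ is scalar,'' which is routine but not literally trivial.
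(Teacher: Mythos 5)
Your proof is correct and follows essentially the same route as the paper: extremality of $(p_n)$ from Theorem \ref{thm:pol-primes-extr}, bounded gaps from Zhang's theorem, and then an application of Theorem \ref{thm:bddgapsextr}. The only difference is that you spell out the ``consequence'' in more detail (and take a small detour by first showing $T$ is scalar, whereas one can argue directly that each eigenvalue $\lambda_x$ satisfies $\lambda_x^{p_n}\to 1$ and hence $\lambda_x=1$ by extremality, as the paper does in the analogous Theorem \ref{thm:seqTI}); both versions are fine.
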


\begin{proof}
By Theorem \ref{thm:pol-primes-extr} the sequence $(p_n)$ is extremal, and by the celebrated result of Zhang \cite{Zhang} $(p_n)$ has bounded gaps. So the previous theorem applies.  
\end{proof}

We now characterize Wiener extremal sequences $(k_n)$ via extremal properties of powers $T^{k_n}$ of Hilbert space contractions $T$.

\begin{thm}\label{thm:wienerextrOP}
For a subsequence $(k_n)$ of $\N$ the following are equivalent:
\begin{iiv}
\item $(k_n)$ is Wiener extremal.
\item For every Hilbert space $H$, every linear contraction $T$  
on $H$ and every $x\in H\setminus\{0\}$, if
\begin{equation}\label{eq:Wiener-Hilbert-extr}
\lim_{N\to \infty} \frac1N\sum_{n=1}^N |\sprod{T^{k_n}x}{x}|^2=\|x\|^4,
\end{equation}
then  $x$ is an eigenvector of $T$ with unimodular eigenvalue.
\end{iiv}
\end{thm}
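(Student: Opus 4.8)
The plan is to prove the two implications separately; the implication (ii)$\Rightarrow$(i) is the quicker one and reuses the multiplication-operator device from the proof of Proposition \ref{prop:orbits-discr-meas}. Given a probability measure $\mu$ with $\limaveN|\Hat\mu(k_n)|^2=1$, I would take $H=\Ell^2(\T,\mu)$ and let $T$ be the multiplication operator $(Tf)(z)=zf(z)$, which is unitary and in particular a contraction. Testing against $x=\one$ and using $\sprod{T^k\one}{\one}=\Hat\mu(k)$ together with $\|\one\|^2=\mu(\T)=1$, hypothesis \eqref{eq:Wiener-Hilbert-extr} holds. Then (ii) forces $T\one=\lambda\one$ for some $\lambda\in\T$, i.e.\ $z=\lambda$ for $\mu$-a.e.\ $z$, so $\mu=\delta_\lambda$ is Dirac; since $\mu$ was an arbitrary Wiener extremal measure, $(k_n)$ is Wiener extremal.

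For (i)$\Rightarrow$(ii) I would exploit the orthogonal decomposition $H=H_\uni\oplus H_\cnu$ recalled before Theorem \ref{thm:linop}, writing $x=u+w$ with $u=P_\uni x$ and $w=P_\cnu x$. As both summands reduce $T$, the mixed terms vanish and $\sprod{T^{k_n}x}{x}=\sprod{T^{k_n}u}{u}+\sprod{T^{k_n}w}{w}$; by Foguel's theorem $\sprod{T^{k_n}w}{w}\to0$, and since $\sprod{T^{k_n}u}{u}$ is bounded the pointwise difference $|\sprod{T^{k_n}x}{x}|^2-|\sprod{T^{k_n}u}{u}|^2$ tends to $0$ and hence has vanishing Ces\`aro average. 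Therefore \eqref{eq:Wiener-Hilbert-extr} transfers to $\limaveN|\sprod{T^{k_n}u}{u}|^2=\|x\|^4$. Applying the spectral theorem to the unitary operator $T|_{H_\uni}$ produces a positive measure $\mu_{u,u}$ on $\T$ of total mass $\|u\|^2$ with $\sprod{T^{k}u}{u}=\Hat{\mu_{u,u}}(k)$ for all $k$.

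The main obstacle is then to show that \emph{all} of the mass sits on the unitary part, so that the averaged hypothesis really is a statement about a probability measure. Since the limit $\|x\|^4$ is nonzero, $u\ne0$, and I may normalize $\nu:=\mu_{u,u}/\|u\|^2$ to a probability measure with $\Hat{\mu_{u,u}}(k)=\|u\|^2\Hat\nu(k)$. Using $|\Hat\nu(k_n)|\le1$ and $\|u\|\le\|x\|$ one obtains the squeeze
\[
\|x\|^4=\|u\|^4\,\limaveN|\Hat\nu(k_n)|^2\le\|u\|^4\le\|x\|^4,
\]
forcing $\|u\|=\|x\|$ (hence $w=0$) and $\limaveN|\Hat\nu(k_n)|^2=1$. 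Thus $\nu=\mu_{u,u}$ is a Wiener extremal probability measure, so by (i) it equals $\delta_\lambda$ for some $\lambda\in\T$. Reading this back through the spectral projection $E(\{\lambda\})$ gives $E(\T\setminus\{\lambda\})u=0$, i.e.\ $Tu=\lambda u$; since $x=u$, the vector $x$ is an eigenvector of $T$ with unimodular eigenvalue, as required.
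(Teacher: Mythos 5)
Your proposal is correct and follows essentially the same route as the paper: the multiplication operator on $\Ell^2(\T,\mu)$ for (ii)$\Rightarrow$(i), and for (i)$\Rightarrow$(ii) the decomposition $H=H_\uni\oplus H_\cnu$, Foguel's weak stability on the completely non-unitary part, a squeeze argument to force $P_\cnu x=0$, and the spectral theorem on the unitary part. The only differences are cosmetic (you normalize the spectral measure rather than the vector, and spell out the vanishing of the cross terms slightly differently).
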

\begin{proof} (ii) $\Rightarrow$ (i): Let $\mu$ be a probability
measure and consider the Hilbert spa\-ce $\Ell^2(\T,\mu)$ and the
unitary operator $T$ defined by $(Tf)(z)=\ol{z} f(z)$. Then $\Hat
\mu(n)=\sprod{T^{n}\one}{\one}$. If $\frac1N\sum_{n=1}^N |\Hat
\mu(k_n)|^2=1=\|\one\|_2^2$, then by the assumption $\one$ is an
eigenfunction of $T$ to some unimodular eigenvalue $\lambda$, meaning
that $\mu=\delta_\lambda$.

\medskip\noindent 
(i) $\Rightarrow$ (ii): We can assume $\|x\|=1$  and decompose
$H=H_\cnu\oplus H_\uni$. We first show that $x\in H_\text{uni}$ which is, with a similar argument, due to Foguel \cite[Theorem 1.3]{Foguel}. Observe
\begin{align*}
|\sprod{T^{n}x}{x}|^2=|\sprod{T^{n}P_\cnu x}{x}|^2+|\sprod{T^{n}P_\uni
x}{x}|^2+2\Re \sprod{T^{n}P_\cnu x}{x} \sprod{ x}{T^{n}P_\uni x}.
\end{align*}
Here the first and the last terms converge to $0$ as $n\to \infty$,
because $T$ is weakly stable on $H_\cnu$. From this we obtain
\[
1=\lim_{N\to \infty} \frac1N\sum_{n=1}^N
|\sprod{T^{k_n}x}{x}|^2=\lim_{N\to \infty} \frac1N\sum_{n=1}^N
|\sprod{T^{k_n}P_\uni x}{P_\uni x}|^2.
\]
In particular $\|P_\uni x\|=\|x\|=1$, implying $x=P_\uni x$. So we can
assume that $H=H_\uni$, and consider the spectral measure $\mu_{x}$
for the unitary operator $T$. Then $\Hat
\mu_x(k_n)=\sprod{T^{k_n} x}{x}$, and $\frac1N\sum_{n=1}^N |\Hat
\mu(k_n)|^2\to 1$.
 By the assumption that $(k_n)$ is Wiener extremal, we obtain that
$\mu_x$ is a Dirac measure, meaning that $x$ is an eigenvector to an
unimodular  eigenvalue.
\end{proof}

\begin{prop}\label{prop:wienerextrOP-WAP}
If $(k_n)$ has positive upper density, 
then the assertions in Theorem \ref{thm:wienerextrOP} are equivalent to 
\begin{iiv}
\item[{\upshape (iii)}]  For every Banach space $E$, every power bounded operator $T\in \LLL(E)$ and every $x\in E\setminus\{0\}$ with relatively weakly compact orbit,  if
\begin{equation}\label{eq:extr-vector}
\lim_{N\to \infty} \frac1N\sum_{n=1}^N |\dprod{T^{k_n}x}{x'}|^2=|\dprod{x}{x'}|^2 \quad \text{for every } x'\in E',
\end{equation}
then  $x$ is an eigenvector of $T$ with unimodular eigenvalue.
\end{iiv}
\end{prop}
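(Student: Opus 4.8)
The plan is to establish the equivalence (ii)$\Leftrightarrow$(iii), since (i)$\Leftrightarrow$(ii) is already Theorem \ref{thm:wienerextrOP}. The implication (iii)$\Rightarrow$(ii) is the easy half: a contraction $T$ on a Hilbert space $H$ is power bounded, and since $H$ is reflexive every orbit is bounded and hence relatively weakly compact, so (iii) is applicable. It then remains only to upgrade the diagonal hypothesis \eqref{eq:Wiener-Hilbert-extr} of (ii) to the full hypothesis \eqref{eq:extr-vector} of (iii): from \eqref{eq:Wiener-Hilbert-extr} and the Koopman--von Neumann Lemma \ref{lem:KvN} one gets $|\sprod{T^{k_n}x}{x}|\to\|x\|^2$ in density, whence Lemma \ref{lemma:Hilbert-contr} applied along the density-one set yields $|\sprod{T^{k_n}x}{y}|\to|\sprod{x}{y}|$ in density for every $y\in H$; Cesàro averaging then gives \eqref{eq:extr-vector}, and (iii) makes $x$ an eigenvector.

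For the substantial direction (i)$\Rightarrow$(iii), let $T$ be power bounded on $E$ and let $x\neq 0$ have relatively weakly compact orbit and satisfy \eqref{eq:extr-vector}. First I would replace $E$ by the separable invariant subspace $\ol{\lin}\{T^nx:n\geq 0\}$ and invoke the Jacobs--de Leeuw--Glicksberg decomposition (see \cite[Ch.{} 16]{EFHN} and \cite[Thm. I.1.16]{E-book}): $x=x_r+x_s$, where $x_r$ lies in the reversible part $E_r=\ol{\lin}\{$unimodular eigenvectors$\}$ and $x_s$ is \emph{almost weakly stable}, i.e.\ $\Dlim_{n\to\infty}\dprod{T^nx_s}{x'}=0$ for every $x'\in E'$; the corresponding projections $P_r,P_s$ commute with $T$. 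The goal is to show first that $x_s=0$ and then that $x_r$ has a single nonzero eigencomponent.

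The crux is eliminating $x_s$, and this is exactly where positive upper density enters. On the one hand, applying \eqref{eq:extr-vector} to $P_s'x'$ and using $P_sT=TP_s$ gives $\aveN|\dprod{T^{k_n}x_s}{x'}|^2\to|\dprod{x_s}{x'}|^2$. On the other hand, almost weak stability together with Lemma \ref{lem:KvN} yields $\frac1M\sum_{m=1}^M|\dprod{T^mx_s}{x'}|^2\to 0$. Now choose $N_m\to\infty$ realizing $\ol d(k_n)=\gamma>0$ and set $L_m:=|\{n:k_n\leq N_m\}|$, so that $L_m/N_m\to\gamma$ and (as $(k_n)$ is increasing) $\{n:k_n\leq N_m\}=\{1,\dots,L_m\}$; then
\[
\frac1{L_m}\sum_{n=1}^{L_m}|\dprod{T^{k_n}x_s}{x'}|^2\leq\frac{N_m}{L_m}\cdot\frac1{N_m}\sum_{j=1}^{N_m}|\dprod{T^jx_s}{x'}|^2\underset{m\to\infty}{\longrightarrow}\tfrac1\gamma\cdot 0=0.
\]
Since the left-hand side is a subsequence of the convergent averages above, its limit is $|\dprod{x_s}{x'}|^2$, forcing $\dprod{x_s}{x'}=0$ for all $x'$, hence $x_s=0$ and $x=x_r\in E_r$. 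Finally, the mean ergodic projections $P_\lambda=\lim_N\aveN\ol\lambda^nT^n$ onto $\ker(\lambda-T)$ exist (relatively weakly compact orbit) and commute with $T$. Exactly as in the proof of Proposition \ref{prop:orbits-discr-meas}, if $P_\lambda x\neq 0\neq P_\mu x$ with $\lambda\neq\mu$, then choosing via Hahn--Banach a functional $z'$ with $\dprod{P_\lambda x}{z'}=\dprod{P_\mu x}{z'}\neq 0$ reduces \eqref{eq:extr-vector} to $\aveN|(\lambda\ol\mu)^{k_n}+1|^2\to 4$, i.e.\ $\Dlim(\lambda\ol\mu)^{k_n}=1$; Wiener extremality and Theorem \ref{thm:extr-W} then give $\lambda=\mu$, a contradiction. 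Thus at most one eigencomponent of $x$ is nonzero, and since $x=x_r$ lies in the closed span of its eigencomponents $\{P_\mu x\}$ (again by the Jacobs--de Leeuw--Glicksberg structure) it coincides with that single $P_\lambda x$, an eigenvector with unimodular eigenvalue.

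I expect the step eliminating $x_s$ to be the main obstacle: in the relatively weakly compact (rather than norm compact) setting of Proposition \ref{prop:orbits-discr-meas} the stable part is only weakly stable \emph{in density}, not strongly stable, so the clean cancellation used there is unavailable. The positive-upper-density sandwich above is the device that repairs this, and it is precisely the reason why this Banach space equivalence must be restricted to sequences of positive upper density.
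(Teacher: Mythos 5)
Your proof is correct and follows the same overall architecture as the paper's: Jacobs--de Leeuw--Glicksberg decomposition of the cyclic subspace, use of positive upper density to annihilate the stable component $x_s$, and the Goldstein--Nagy Hahn--Banach argument on the reversible part. The one step where you genuinely diverge is the elimination of $x_s$: the paper applies the Koopman--von Neumann lemma and Remark \ref{rem:pos-upper-dens-subseq} to extract a positive-upper-density subsequence $(k_n')$ along which $|\dprod{T^{k_n'}x_s}{x'}|\to|\dprod{x_s}{x'}|$ for a norming functional, and then intersects it with the density-one set witnessing almost weak stability; you instead use the counting sandwich $\frac1{L_m}\sum_{n\le L_m}|\dprod{T^{k_n}x_s}{x'}|^2\le\frac{N_m}{L_m}\cdot\frac1{N_m}\sum_{j\le N_m}|\dprod{T^jx_s}{x'}|^2$, which compares the averages along $(k_n)$ directly with the full Ces\`aro averages (themselves tending to $0$ by Koopman--von Neumann). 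Both are valid; your version is slightly more quantitative and avoids the subsequence-intersection step, at the cost of a mild bookkeeping assumption that the $k_n\le N_m$ are distinct integers in $\{1,\dots,N_m\}$ (harmless, since $(k_n)$ is eventually strictly increasing). The other difference is cosmetic: you inline the proof of Proposition \ref{prop:orbits-discr-meas} (i)$\Rightarrow$(iii) for the reversible part, whereas the paper invokes it as a black box applied to $Qx$, whose orbit is relatively norm compact.
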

\begin{proof}
Indeed, (iii)$\Rightarrow$(ii) follows from Lemma \ref{lemma:Hilbert-contr}. To show the implication (i)$\Rightarrow$(iii) we assume without loss of generality that $T$ is a contraction (by taking an equivalent norm). Let  $(k_n)$ be Wiener extremal and $E$, $T$ and $x$ be as in (iii). We can also assume that $E$ is spanned by the orbit of $x$ under $T$ by restriction to this subspace if necessary. By e.g.~Engel, Nagel \cite[Proposition{} A.4]{EN} or \cite[Sec.{} 16.2]{EFHN}, every orbit of $T$ on $E$ is relatively weakly compact and therefore $T$ admits the Jacobs--de Leeuw--Glicksberg decomposition $E=E_r\oplus E_s$, where
            \begin{align*}
E_r&=\ol{\lin}\{y\in E:\, Ty=\lambda y \text{ for some }\lambda\in\T\},\\
E_s&=\{y\in E:\,\lim_{n\to\infty}T^{m_n}y=0\text{ weakly for some }(m_n) \text{ with density }1\},
\end{align*}
see e.g.~\cite[pp. 9--11]{E-book}  or \cite[Ch.{} 16]{EFHN} for details.
 Let $Q$ be the corresponding projection onto $E_r$, which commutes with $T$. Applying \eqref{eq:extr-vector} to $Q'x'$ instead of $x'$ we see that $Qx$ satisfies  \eqref{eq:extr-vector} as well.  
Since the orbit of $Qx$ is relatively compact and  Wiener extremality implies Wiener extremality for discrete measures, $TQx=\lambda Qx$ for some $\lambda\in\T$ by Proposition \ref{prop:orbits-discr-meas}. It remains to show that $Qx=x$. 

\medskip\noindent  Let $x_s:=x-Qx\in E_s$ and let $(m_n)$ be the sequence of density $1$ with $\lim_{n\to\infty}T^{m_n}x_s=0$ weakly. Applying \eqref{eq:extr-vector} to $(I-Q)'x'$ instead of $x'$ we see that $x_s$ also satisfies  \eqref{eq:extr-vector}.  By Lemma \ref{lem:KvN} (b) and Remark \ref{rem:pos-upper-dens-subseq},  for a fixed $x'\in E'$ with $\|x'\|=1$ and $\dprod{x_s}{x'}=\|x_s\|$ there exists a subsequence $(k_n')$ of $(k_n)$ with positive upper density such that $\lim_{n\to\infty}|\dprod{T^{k_n'}x_s}{x'}|=|\dprod{x_s}{x'}|$.
Let $(\ell_n)$ denote the intersection of $(k_n')$ with $(m_n)$ (which has positive upper density). We have
\[
0=\lim_{n\to\infty}|\dprod{T^{\ell_n}x_s}{x'}|=|\dprod{x_s}{x'}|=\|x_s\|.
\]
The proof is complete.
\end{proof}

Analogously we obtain the following characterization of extremal sequences, 
 where we restate Theorem \ref{thm:bddgapsextr} for the sake of completeness.
\begin{thm}\label{thm:GNgeneral}
For a subsequence $(k_n)$ of $\N$ the following are equivalent:
\begin{iiv}
\item $(k_n)$ is extremal.
\item For every Hilbert space $H$, for every $T$ linear contraction
on $H$, and for every $x\in H\setminus\{0\}$, if
\[
\lim_{n\to \infty} |\sprod{T^{k_n}x}{x}|=\|x\|^2,
\]
then  $x$ is an eigenvector of $T$ with unimodular eigenvalue.
\end{iiv}

Moreover, if $(k_n)$  satisfies $\liminf_{n\to \infty}(k_{n+1}-k_n)<\infty$ (for example, has positive upper density),
then the above are equivalent to:
\begin{iiv}
\item[{\upshape (iii)}]
For every 
bounded operator $T$ on a Banach space $E$ and every $x\in E\setminus\{0\}$, 
\[
\lim_{n\to \infty} |\dprod{T^{k_n}x}{x'}|=|\dprod{x}{x'}| \quad \text{for every } x'\in E'
\]
implies that  $x$ is an eigenvector of $T$ with unimodular eigenvalue.
\end{iiv}
\end{thm}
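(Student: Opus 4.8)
The plan is to establish the equivalence (i)$\Leftrightarrow$(ii) by repeating the scheme of the proof of Theorem \ref{thm:wienerextrOP} with the Ces\`aro averages replaced throughout by ordinary limits, and then to adjoin (iii) under the bounded-gap hypothesis by invoking the already-proved Theorem \ref{thm:bddgapsextr} together with Lemma \ref{lemma:Hilbert-contr}.

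For (ii)$\Rightarrow$(i), suppose $\mu$ is a probability measure that is extremal along $(k_n)$, i.e.\ $\lim_{n\to\infty}|\Hat\mu(k_n)|=1$. On $\Ell^2(\T,\mu)$ I would take the unitary multiplication operator $(Tf)(z)=zf(z)$, so that $\sprod{T^{k_n}\one}{\one}=\Hat\mu(k_n)$ and hence $|\sprod{T^{k_n}\one}{\one}|\to 1=\|\one\|^2$. Assertion (ii) then makes $\one$ an eigenvector to some unimodular $\lambda$, which forces $\mu=\delta_\lambda$, a Dirac measure; thus $(k_n)$ is extremal. For (i)$\Rightarrow$(ii), I would normalize $\|x\|=1$ and split $H=H_\cnu\oplus H_\uni$. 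Using that $T$ is weakly stable on $H_\cnu$ (Foguel), the expansion of $|\sprod{T^nx}{x}|^2$ into its unitary, completely-non-unitary, and cross terms shows that $|\sprod{T^nx}{x}|^2-|\sprod{T^nP_\uni x}{P_\uni x}|^2\to 0$; evaluating along $(k_n)$ and using the hypothesis yields $\|P_\uni x\|=1=\|x\|$, so $x=P_\uni x\in H_\uni$. On $H_\uni$ the spectral measure $\mu_x$ satisfies $\Hat\mu_x(k_n)=\sprod{T^{k_n}x}{x}$, whence $\lim_{n\to\infty}|\Hat\mu_x(k_n)|=1$; extremality of $(k_n)$ then forces $\mu_x$ to be Dirac, i.e.\ $x$ is an eigenvector to a unimodular eigenvalue.

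For the ``moreover'' part I would close the cycle (i)$\Rightarrow$(iii)$\Rightarrow$(ii). The implication (i)$\Rightarrow$(iii) is exactly the content of Theorem \ref{thm:bddgapsextr}: under $\liminf_{n\to\infty}(k_{n+1}-k_n)<\infty$, Proposition \ref{p:bddgapsextr} identifies extremality with extremality for discrete measures, which is the hypothesis of that theorem. For (iii)$\Rightarrow$(ii), I would specialize (iii) to a Hilbert-space contraction $T$: by Lemma \ref{lemma:Hilbert-contr} the scalar hypothesis $\lim_{n\to\infty}|\sprod{T^{k_n}x}{x}|=\|x\|^2$ of (ii) already implies $\lim_{n\to\infty}|\sprod{T^{k_n}x}{y}|=|\sprod{x}{y}|$ for every $y\in H$, which is precisely the hypothesis of (iii); hence (iii) delivers that $x$ is an eigenvector. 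Combined with (i)$\Leftrightarrow$(ii) this renders all three assertions equivalent.

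The routine ingredients are the two spectral realizations and the Foguel splitting, both already used in Theorem \ref{thm:wienerextrOP}. The only genuinely substantial input is the passage to arbitrary Banach-space operators in (iii), but this is carried entirely by Theorem \ref{thm:bddgapsextr}, whose proof exploits the bounded-gap structure (Zhang's theorem in the prime case) to produce a relation $T^dx=\lambda x$ with $|\lambda|=1$ and then reduces to Proposition \ref{prop:orbits-discr-meas}. Within the present argument no new obstacle arises beyond correctly assembling these earlier results.
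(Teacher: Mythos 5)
Your proposal is correct and follows essentially the same route as the paper, which proves this theorem by declaring it "analogous" to Theorem \ref{thm:wienerextrOP} (with ordinary limits replacing Ces\`aro averages, using the same multiplication-operator construction and the Foguel splitting $H=H_\cnu\oplus H_\uni$) and by restating Theorem \ref{thm:bddgapsextr} for the implication involving (iii). Your closing of the cycle via (iii)$\Rightarrow$(ii) through Lemma \ref{lemma:Hilbert-contr} is exactly the mechanism the paper uses in the analogous step of Proposition \ref{prop:orbits-discr-meas}.
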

The implication (i)$\Rightarrow$ (iii) of Theorem \ref{thm:GNgeneral} extends
Theorem \ref{abstwienerextr} by Goldstein, Nagy to subsequences.

We finally provide a characterization of identity operators in terms of convergence of ${T^{k_n}}$ to identity in the spirit of Gelfand's $T=\Id$ Theorem, see e.g., \cite[Theorem B.17]{EN}.

\begin{thm} \label{thm:seqTI}
Let $(k_n)$ be a subsequence of $\N$.
\begin{abc}
\item Consider the following assertions:
\begin{iiv}
\item $(k_n)$ is extremal.
\item  Every  Hilbert
space contraction $T$ with
\begin{equation}\label{eq:gelfand}
\lim_{n\to\infty}T^{k_n}= \Id
 \end{equation}
 in the weak operator topology is the identity $\Id$ itself.
\end{iiv}
Then {\upshape (i)}$\Rightarrow${\upshape (ii)}; and {\upshape (i)}$\Leftrightarrow${\upshape(ii)} if $(k_n)$ is good. 
Moreover, one can replace ``Hilbert space contraction $T$'' by ``bounded operator $T$ on a Banach space''
 whenever $(k_n)$ satisfies $\liminf_{n\to \infty}(k_{n+1}-k_n)<\infty$.

\item  The analogous statement holds when in {\upshape (a)} one replaces ``extremal'' by ``Wiener extremal''  in condition  {\upshape(i)}, while in {\upshape (ii)}  \eqref{eq:gelfand} is replaced by 
\[
\Dlim_{n\to\infty}T^{k_n}= \Id.
\]
Moreover, one can replace ``Hilbert space contraction $T$'' by ``Banach space operator $T$ with weakly relatively compact orbits'' whenever $(k_n)$ has positive upper density.
\end{abc}
\end{thm}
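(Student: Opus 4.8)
The plan is to reduce each implication of the form (i)$\Rightarrow$(ii) to the operator-theoretic extremality characterizations already established, and to obtain each converse (under goodness) from a one-dimensional rotation example. In every case the pattern is the same: the convergence $T^{k_n}\to\Id$ tested against $y=x$ produces a diagonal extremality condition, the relevant characterization turns $x$ into a unimodular eigenvector $Tx=\lambda_x x$, and the convergence $\lambda_x^{k_n}\to1$ is then forced to give $\lambda_x=1$.

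For part (a), assume $(k_n)$ is extremal and let $T$ be a Hilbert space contraction with $T^{k_n}\to\Id$ in the weak operator topology. Testing the weak convergence against $y=x$ gives $\sprod{T^{k_n}x}{x}\to\|x\|^2$, hence $|\sprod{T^{k_n}x}{x}|\to\|x\|^2$ for every $x\neq0$. By Theorem \ref{thm:GNgeneral} (i)$\Rightarrow$(ii) each such $x$ is an eigenvector, $Tx=\lambda_x x$ with $|\lambda_x|=1$; then $\lambda_x^{k_n}\to1$, and since extremality forces $G_\infty((k_n))=\{1\}$ by Theorem \ref{thm:extr}, we get $\lambda_x=1$. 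As $x$ was arbitrary, $T=\Id$. For the converse when $(k_n)$ is good: if $(k_n)$ were not extremal, then $G_\infty((k_n))\neq\{1\}$, so some $z\neq1$ satisfies $z^{k_n}\to1$, and multiplication by $z$ on $\C$ is a non-identity contraction with $T^{k_n}\to\Id$, contradicting (ii). The Banach refinement of (a) is identical, except that $|\dprod{T^{k_n}x}{x'}|\to|\dprod{x}{x'}|$ replaces the diagonal condition and Theorem \ref{thm:GNgeneral}(iii) (that is, Theorem \ref{thm:bddgapsextr}, available since $\liminf(k_{n+1}-k_n)<\infty$) replaces Theorem \ref{thm:GNgeneral}(ii); the same $\C$-example handles the converse.

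Part (b) follows the same template with density replacing ordinary convergence. Assuming $(k_n)$ is Wiener extremal and $\Dlim T^{k_n}=\Id$, testing against $y=x$ yields $\Dlim\sprod{T^{k_n}x}{x}=\|x\|^2$, whence $\Dlim|\sprod{T^{k_n}x}{x}|=\|x\|^2$; by Lemma \ref{lem:KvN}(b) this upgrades to $\frac1N\sum_{n=1}^N|\sprod{T^{k_n}x}{x}|^2\to\|x\|^4$, so Theorem \ref{thm:wienerextrOP} makes $x$ a unimodular eigenvector $Tx=\lambda_x x$. Then $\Dlim\lambda_x^{k_n}=1$ forces $\lambda_x=1$ by Theorem \ref{thm:extr-W} (i)$\Rightarrow$(iii), giving $T=\Id$; the converse again uses rotation by a $z\neq1$ with $\Dlim z^{k_n}=1$.

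For the Banach version of (b) I would use Proposition \ref{prop:wienerextrOP-WAP} in place of Theorem \ref{thm:wienerextrOP}. A vector with weakly relatively compact orbit has bounded orbit, so $T$ is power bounded by the uniform boundedness principle, which lets the proposition apply. The subtle point, and the step I expect to be the main obstacle, is producing the Cesàro condition $\frac1N\sum_{n=1}^N|\dprod{T^{k_n}x}{x'}|^2\to|\dprod{x}{x'}|^2$ required by Proposition \ref{prop:wienerextrOP-WAP}(iii): unlike the Hilbert contraction case there is no pointwise bound $|\dprod{T^{k_n}x}{x'}|\le|\dprod{x}{x'}|$, so Koopman--von Neumann does not apply directly. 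Instead I would invoke the elementary fact that for any bounded scalar sequence $(a_n)$ with $\Dlim a_n=a$ one has $\frac1N\sum_{n=1}^N|a_n|^2\to|a|^2$ (proved by splitting the average over the density-one set on which $a_n\to a$ and its negligible, uniformly bounded complement), applied to $a_n=\dprod{T^{k_n}x}{x'}$, whose boundedness comes from compactness of the orbit. With this in hand Proposition \ref{prop:wienerextrOP-WAP}(iii) makes each $x\neq0$ a unimodular eigenvector, $\Dlim\lambda_x^{k_n}=1$ forces $\lambda_x=1$ through Theorem \ref{thm:extr-W}, and $T=\Id$ follows; the converse is once more the $\C$-example.
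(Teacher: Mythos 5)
Your proposal is correct and follows essentially the same route as the paper: reduce (i)$\Rightarrow$(ii) to the operator-theoretic extremality characterizations (Theorems \ref{thm:GNgeneral}, \ref{thm:wienerextrOP}, Proposition \ref{prop:wienerextrOP-WAP}), conclude $\lambda_x=1$ via Theorem \ref{thm:extr} resp.\ \ref{thm:extr-W}, and refute the converse with a unimodular rotation (the paper uses $\diag(1,\lambda)$ on $\C^2$, you use multiplication by $z$ on $\C$ --- immaterial). Your explicit justification of the passage from $\Dlim$-convergence to the Ces\`aro condition in the Banach part of (b), where the pointwise bound needed for Koopman--von Neumann is unavailable, is a correct filling-in of a step the paper leaves implicit under ``analogous''.
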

\begin{proof}(a) (i)$\Rightarrow$(ii):
By Theorem \ref{thm:GNgeneral}  any $x\in E\setminus \{0\}$ is an eigenvector of $T$ for some eigenvalue $\lambda\in \T$. So that $\lambda^{k_n}\to 1$ as $n\to \infty$, but then by Theorem \ref{thm:extr} $\lambda=1$ follows.

\medskip\noindent (ii)$\Rightarrow$(i): Assume that $(k_n)$  is good. If $(k_n)$ is not extremal, then by Theorem \ref{thm:extr} there is $\lambda\in \T$ with  $\lambda\neq 1$ with $\lambda^{k_n}\to 1$ as $n\to \infty$. The diagonal operator $T:=\diag(1,\lambda)$ on $E:=\C^2$ provides a counterexample to (ii).

\medskip\noindent The proof of (b) is analogous, where one uses Theorem \ref{thm:wienerextrOP} and \ref{thm:extr-W} in place of Theorems \ref{thm:GNgeneral} and \ref{thm:extr}, respectively, as well as Proposition  \ref{prop:wienerextrOP-WAP}.
\end{proof}

For examples of sequences for which the equivalences in Theorems \ref{thm:wienerextrOP},  \ref{thm:GNgeneral} and \ref{thm:seqTI} hold see Remarks \ref{rem:erg}, \ref{rem:pos-upper-dens-subseq} and Theorem \ref{thm:pol-primes-extr}.

The following result shows Wiener extremality of polynomial sequences  for power bounded operators on Hilbert spaces.
\begin{prop}
Let $P\in \Z[\cdot]$ be with $P(\N_0)\subseteq \N_0$ such that  $(P(n))$ is extremal, i.e., satisfies the equivalent conditions in Theorem \ref{thm:pol-primes-extr} {\upshape (a)}. Let further $H$ be a Hilbert space and  $T\in \LLL(H)$ be a power bounded operator on $H$. Then 
\begin{equation*}
\Dlim_{n\to\infty}T^{P(n)}x= x \text{ weakly } \quad \Longrightarrow \quad x\in\Fix( T).
 \end{equation*}
In particular, 
$\Dlim_{n\to\infty}T^{P(n)}= \Id$ in the weak operator topology implies $T=\Id$.
\end{prop}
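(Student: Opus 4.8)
The plan is to decompose $H$ according to the Jacobs--de Leeuw--Glicksberg structure and to treat the reversible and the flight part separately, the reversible part being settled by the extremality of $(P(n))$ and the flight part being the real difficulty. Since $H$ is reflexive and $T$ is power bounded, every orbit $\{T^m y:m\geq 0\}$ is bounded, hence relatively weakly compact, so for each $\mu\in\T$ the mean ergodic projection
\[
P_\mu:=\lim_{N\to\infty}\frac1N\sum_{n=1}^N\ol{\mu}^{\,n}T^n
\]
exists strongly, projects onto $\ker(\mu-T)$, commutes with $T$, and satisfies $P_\mu P_\nu=0$ for $\mu\neq\nu$ (cf.\ Remark \ref{rem:kronecker} and the references there). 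This gives $H=H_r\oplus H_s$, with $H_r=\ol{\lin}\{y:Ty=\mu y\text{ for some }\mu\in\T\}$ and $H_s$ the flight vectors, characterised by $P_\mu y=0$ for all $\mu\in\T$.

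First I would handle the reversible part. Fix $\mu\in\T$. Since $P_\mu$ is bounded, hence weak-to-weak continuous, and commutes with each $T^{P(n)}$, applying $P_\mu$ to the hypothesis $\Dlim_{n\to\infty}T^{P(n)}x=x$ and using $T^{P(n)}P_\mu x=\mu^{P(n)}P_\mu x$ yields $\Dlim_{n\to\infty}\mu^{P(n)}P_\mu x=P_\mu x$ weakly. If $P_\mu x\neq 0$, pairing with $P_\mu x$ gives $\mu^{P(n)}\|P_\mu x\|^2\to\|P_\mu x\|^2$ in density, so $\Dlim_{n\to\infty}\mu^{P(n)}=1$. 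As $(P(n))$ is extremal, hence Wiener extremal by Theorem \ref{thm:pol-primes-extr}, condition (iii) of Theorem \ref{thm:extr-W} forces $\mu=1$. Thus $P_\mu x=0$ for every $\mu\neq 1$, whence $P_1x\in\Fix(T)$. Setting $w:=x-P_1x$ one checks $P_\mu w=0$ for all $\mu\in\T$, so $w\in H_s$, and applying $I-P_1$ to the hypothesis gives $\Dlim_{n\to\infty}T^{P(n)}w=w$ weakly. It then remains to prove $w=0$.

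For the flight part the guiding observation is that, by Lemma \ref{lem:KvN}, the hypothesis already forces $\frac1N\sum_{n=1}^N\sprod{T^{P(n)}w}{w}\to\|w\|^2$, so it suffices to show this average tends to $0$. When $T$ is a \emph{contraction} this is immediate from Theorem \ref{thm:linop}, which applies because $(P(n))$ is good with countable spectrum: taking $x=y=w$ there and using $P_\alpha w=0$ for every $\alpha\in\T$, the right-hand side vanishes, so $\frac1N\sum_{n=1}^N|\sprod{T^{P(n)}w}{w}|^2\to 0$, contradicting $\sprod{T^{P(n)}w}{w}\to\|w\|^2$ in density unless $w=0$. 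For a merely power bounded $T$, however, Theorem \ref{thm:linop} is unavailable, since its proof rests on the $H_\uni\oplus H_\cnu$ splitting and the spectral theorem for the contraction's unitary part.

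The main obstacle is therefore exactly this averaging step for a non-contraction: I expect to have to establish the polynomial mean ergodic statement $\frac1N\sum_{n=1}^N\sprod{T^{P(n)}w}{w}\to 0$ for a flight vector $w$ of a power bounded Hilbert space operator. The heuristic is that $(P(n))$ is good with limit function $c$ supported on the countable set of roots of unity (Lemma \ref{lemma:limit-pol-prim} and \eqref{eq:limit-pol}), while $w$ carries no unimodular eigencomponent, so that morally $\frac1N\sum_n\sprod{T^{P(n)}w}{w}\to\int_\T c\,\dd\mu_w=0$ as in the unitary case. The difficulty is that for a power bounded operator there is neither a scalar spectral measure $\mu_w$ nor a unitary dilation, and the flight property on $H_s$ only gives $T^m w\to 0$ weakly \emph{in density}, which cannot be intersected with the density-zero range of $P$ (for $\deg P\geq 2$). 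I would try to close this gap either by reducing the weakly mixing action of $T$ on $H_s$ to the unitary case and running a van der Corput estimate, or by invoking a mean ergodic theorem along $(P(n))$ for power bounded operators on Hilbert spaces. Once the flight average is shown to vanish one gets $w=0$, hence $x=P_1x\in\Fix(T)$, and the ``in particular'' assertion follows by applying this to every $x\in H$.
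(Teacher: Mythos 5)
Your treatment of the reversible part is correct and matches the paper: applying the mean ergodic projections $P_\mu$ and invoking Wiener extremality of $(P(n))$ (Theorem \ref{thm:pol-primes-extr} (a) together with Theorem \ref{thm:extr-W}) kills every eigencomponent with $\mu\neq 1$. The genuine gap is exactly where you locate it yourself: the flight part. You correctly observe that Theorem \ref{thm:linop} is unavailable for a merely power bounded $T$ (which need not be similar to a contraction), that the density-one weak stability on $H_s$ cannot be intersected with the density-zero range of $P$ when $\deg P\geq 2$, and that what is needed is a mean ergodic theorem along $(P(n))$ for power bounded Hilbert space operators --- but you then stop, listing this as something you ``would try'' rather than proving or citing it. As written, the argument does not establish $w=0$, so the proof is incomplete.

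The paper closes precisely this gap by quoting ter Elst and M\"uller \cite[Theorem 2.4]{tEM}, a van der Corput-type lemma for power bounded operators --- in effect the second of your two proposed routes: for a power bounded $T$ on a Hilbert space one has $\limaveN T^{P(n)}x=0$ whenever $x\perp H_{\mathrm{rat}}$, the closed linear span of eigenvectors with root-of-unity eigenvalues. Since $\Dlim_{n\to\infty}T^{P(n)}x=x$ weakly forces $\limaveN T^{P(n)}x=x$ weakly (a bounded sequence converging in density is Ces\`aro convergent to the same limit), this yields $x\in H_{\mathrm{rat}}$, after which the projection-plus-extremality step you already have finishes the proof. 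Note that the paper's route is slightly leaner than yours: it bypasses the full Jacobs--de Leeuw--Glicksberg decomposition and works directly with the rational Kronecker part, which is all that the polynomial averages see. If you supply the ter Elst--M\"uller input (or reprove it via a van der Corput estimate), your argument becomes complete.
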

\begin{proof}
By ter Elst, M\"uller \cite[Theorem 2.4]{tEM}, 
\[
\limaveN T^{P(n)}x=0
\]
whenever $x \perp H_\text{rat}$, where $H_\text{rat}$ is the closed linear span of eigenvectors of $T$ corresponding to rational $\lambda\in \T$. Thus by assumption we have $x\in H_\text{rat}$. Applying the projection $P_{\ker (\lambda - T)}$ to the equality $\Dlim_{n\to\infty}T^{P(n)}x= x$, we see that $P_{\ker (\lambda - T)}x=0$ for every $\lambda\neq 1$ by Theorem \ref{thm:pol-primes-extr}(a), proving $x\in\Fix (T)$.
\end{proof}

\section{The continuous parameter case}\label{sec:cont}

In this section we investigate finite Borel measures on $\R$, the validity of the corresponding version of Wiener's lemma for eventually monotone functions and subsequences, extremal functions and sequences, and some implications for orbits of $C_{0}$-semigroups on Banach spaces. Many proofs go analogously to the case of $\T$ and thus will  be omitted. A substantial difference between the discrete and the continuous parameter case is discussed in Subsection \ref{subsec:cont-arithm} below.

In the following, $f$ : $[0,\ \infty$) $\rightarrow \R$ will be a measurable, locally integrable and eventually monotone function with $\displaystyle \lim_{t\to\infty}f(t)=\pm\infty.$

\subsection{General results and extremal functions}

We call $f$ {\it good} if the limit
\[
c(\theta)\ :=\limaveTau e(\theta f(t))\dd t
\]
exists for every $\theta\in \R$. We further call $f$ {\it ergodic} if it is good with $c=\mathbf{1}_{\{0\}}$. Note that $c$ satisfies $c(0)=1, c(-\theta)=\overline{c(\theta)}$ for all $\theta\in \R$, and is multiplicative on the subgroup $\{\theta:\, |c(\theta)|=1\}$ of $\R$. These assertions can be proved analogously to Remark \ref{rem:c}. For a good function $f$ we call the set $\Lambda:=\{\theta:\, c(\theta)\neq 0\}$ its \emph{spectrum}.

\smallskip

 It is not true in general that for $f$ as above, $\limaveTau e(\theta f(t))\dd t=0$ for
Lebesgue-a.e.~$\theta\in \R$ (take for instance $f=\log $). However,  this is the case if we assume moreover $f$ to be good. In particular, we  have an analogue of Proposition \ref{prop:c=1} for good functions.

\begin{prop}
Let $f$ be good. Then $c$ vanishes almost everywhere with respect to the Lebesgue measure $\leb$.
Moreover,  there exists $\alpha\in \R$ such that $\{\theta\in \R \, : 
\, |c(\theta)|=1\}=\alpha \Z$.
\end{prop}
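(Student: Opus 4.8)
The plan is to mirror the proof of Proposition \ref{prop:c=1}, but now the almost-everywhere vanishing of $c$ is a genuine assertion rather than something automatic: in the circle case it came for free from equidistribution and held for \emph{every} sequence, whereas here it must be squeezed out of goodness itself (recall that for $f=\log$, which is not good, $c$ fails to vanish a.e.). So I would prove the a.e.\ vanishing first, and then read off the structure of $\Gamma:=\{\theta\in\R:|c(\theta)|=1\}$ by a Baire category argument.

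\emph{Step 1 (a.e.\ vanishing).} Write $A_\tau(\theta):=\aveTau e(\theta f(t))\dd t$, so $|A_\tau(\theta)|\le 1$ and, by goodness, $A_\tau(\theta)\to c(\theta)$ for every $\theta$. For $\psi\in\Ell^1(\R)$ set $\check\psi(\xi):=\int_\R\psi(\theta)e(\xi\theta)\dd\theta$; by the Riemann--Lebesgue lemma $\check\psi$ is continuous with $\check\psi(\xi)\to 0$ as $|\xi|\to\infty$. Since $f(t)\to\pm\infty$, we get $\check\psi(f(t))\to 0$ as $t\to\infty$, so — being bounded by $\|\psi\|_1$ — its Ces\`aro averages $\aveTau\check\psi(f(t))\dd t$ tend to $0$. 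On the other hand, Fubini's theorem (the integrand is absolutely integrable on $[0,\tau]\times\R$) gives
\[
\int_\R A_\tau(\theta)\psi(\theta)\dd\theta=\aveTau\Bigl(\int_\R e(\theta f(t))\psi(\theta)\dd\theta\Bigr)\dd t=\aveTau \check\psi(f(t))\dd t ,
\]
and since $|A_\tau\psi|\le|\psi|\in\Ell^1(\R)$, dominated convergence yields $\int_\R c\,\psi\,\dd\theta=\lim_\tau\int_\R A_\tau\psi\,\dd\theta=0$. As $c=\lim_n A_n$ is Borel measurable and bounded, I would test with $\psi:=\ol{c}\,\one_{[-R,R]}\in\Ell^1(\R)$ to obtain $\int_{-R}^R|c|^2\dd\theta=0$ for every $R>0$, i.e.\ $c=0$ $\leb$-a.e.

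\emph{Step 2 (structure of $\Gamma$).} By the discussion preceding the proposition, $\Gamma$ is a subgroup of $(\R,+)$, and every subgroup of $\R$ is either dense or of the form $\alpha\Z$ for some $\alpha\ge0$; it remains to exclude the dense case. Each $A_\tau$ is continuous in $\theta$ (dominated convergence on the finite interval $[0,\tau]$), so $c=\lim_n A_n$ is of Baire class one, whence by a theorem of Baire its set of continuity points is dense in $\R$. If $\Gamma$ were dense, then at any continuity point $\theta_0$ I could approach $\theta_0$ both through the density-one set where $c=0$, forcing $c(\theta_0)=0$, and through $\Gamma$, forcing $|c(\theta_0)|=1$ — a contradiction. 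Thus $c$ would have no continuity points, contradicting their density. Hence $\Gamma$ is not dense and $\Gamma=\alpha\Z$ for some $\alpha\ge0$.

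The hard part is Step 1, which is the only genuinely new ingredient beyond the circle case: the identity $\int_\R A_\tau\psi=\aveTau\check\psi(f(t))\dd t$ together with Riemann--Lebesgue and $f(t)\to\pm\infty$ is precisely what lets goodness produce the a.e.\ vanishing. Once Step 1 is in hand, Step 2 is the verbatim analogue of the argument for Proposition \ref{prop:c=1}, with ``finite or dense in $\T$'' replaced by ``equal to $\alpha\Z$ or dense in $\R$''.
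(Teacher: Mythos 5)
Your proof is correct and follows essentially the same route as the paper: the a.e.\ vanishing is obtained by testing $c$ against $\psi\in\Ell^1(\leb)$, using that $\hat\psi\in C_0(\R)$ and $f(t)\to\pm\infty$ force the Ces\`aro averages of $\hat\psi(f(t))$ to vanish, and the structure of $\Gamma$ is read off from the Baire category argument of Proposition \ref{prop:c=1}. The only differences are cosmetic — you make explicit the duality step (via $\psi=\ol{c}\,\one_{[-R,R]}$) and the dense-versus-$\alpha\Z$ dichotomy, both of which the paper leaves to the reader.
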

\begin{proof}
Let $\psi\in \Ell^1(\leb)$. Then, $\theta\mapsto \hat\psi(\theta)=
\int_\R e(\theta t)\psi(t) \dd t\in C_0(\R)$. 
Hence, by dominated convergence,
\[
\int_\R c(t)\psi(t) \dd t=\int_\R\Big(\limaveTau e(tf(\theta))\psi(t)\dd \theta\Big) \dd t=\limaveTau \hat \psi(f(\theta))\dd \theta=0.
\]
Since the latter holds for every $\psi\in \Ell^1(\leb)$, $c$ must vanish 
$\leb$-almost everywhere.

The second statement may be proved as Proposition \ref{prop:c=1}.
\end{proof}

As in the discrete case, we have the following abstract version of Wiener's lemma along functions.

\begin{prop}\label{prop:Wiener-function}(Wiener's lemma along functions, $\R$-version)   Let $f$  be good.
\begin{abc}
\item For every complex Borel measure $\mu$ on $\R$
\[
\limaveTau|\hat{\mu}(f(t))|^{2}\dd t=\int_{\R^{2}}c(\theta-\varphi)\dd(\mu\times\overline{\mu})(\theta,\varphi).
\]
\item The function $f$ is ergodic if and only if
\[
\limaveTau|\hat{\mu}(f(t))|^{2}\dd t=\sum_{a \atom}|\mu(\{a\})|^{2}
\]
holds for every complex Borel measure $\mu$ on $\R.$

\item For an ergodic function $f$ and a Borel probability measure $\mu$ on $\R$ the limit above is 1 if and only if $\mu$  is a Dirac measure.
\end{abc}
\end{prop}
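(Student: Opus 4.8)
The plan is to transcribe the proof of Proposition~\ref{prop:W-seq} almost verbatim, trading the unit circle $\T$ for the additive group $\R$, the unit $1$ for $0$, and the Ces\`aro averages for the continuous averages $\aveTau\cdots\dd t$. For part~(a) I would first expand the modulus: with the convention $\hat\mu(\xi)=\int_\R e(\xi\theta)\dd\mu(\theta)$, conjugating the second factor gives
\[
|\hat{\mu}(f(t))|^{2}=\int_{\R^{2}}e\big((\theta-\varphi)f(t)\big)\dd(\mu\times\ol{\mu})(\theta,\varphi).
\]
Because $f$ is measurable the map $(t,\theta)\mapsto e(\theta f(t))$ is jointly measurable, the integrand is bounded by $1$, and $\mu\times\ol\mu$ is finite, so Fubini's theorem lets me pull the time average inside, leaving $\aveTau e((\theta-\varphi)f(t))\dd t$ under the integral. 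As $\tau\to\infty$ this inner average converges pointwise to $c(\theta-\varphi)$ by goodness of $f$, and a single application of dominated convergence (again via the uniform bound $1$ and finiteness of $\mu\times\ol\mu$) delivers the asserted identity.

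For part~(b) the forward implication is immediate from~(a): if $c=\one_{\{0\}}$ then $c(\theta-\varphi)=\one_{\{\theta=\varphi\}}$, and Fubini gives
\[
\int_{\R^2}\one_{\{\theta=\varphi\}}\dd(\mu\times\ol\mu)=\int_\R\mu(\{\varphi\})\dd\ol\mu(\varphi)=\sum_{a\atom}|\mu(\{a\})|^2.
\]
For the converse I would argue by contraposition, exactly as in Proposition~\ref{prop:W-seq}(b): if $f$ is not ergodic, fix $\theta_0\neq 0$ with $c(\theta_0)\neq 0$ and separate real and imaginary parts using two two-point test measures. If $\Re c(\theta_0)\neq 0$, then for $\mu=\tfrac12(\delta_0+\delta_{\theta_0})$ the right-hand side of~(a) equals $\tfrac12+\tfrac12\Re c(\theta_0)\neq\tfrac12=\sum_{a\atom}|\mu(\{a\})|^2$; if $\Im c(\theta_0)\neq 0$, then for $\mu=\tfrac12(\delta_0+i\delta_{\theta_0})$ it equals $\tfrac12-\tfrac12\Im c(\theta_0)\neq\tfrac12$. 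Here I use $c(0)=1$ and $c(-\theta_0)=\ol{c(\theta_0)}$, the $\R$-analogues of the properties in Remark~\ref{rem:c}, so in either case the Wiener identity fails.

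Finally, part~(c) follows from~(b) by the convexity argument of Corollary~\ref{cor:W-extremal}: for a Borel probability measure $\mu$ the limit equals $\sum_{a\atom}\mu(\{a\})^2\leq\sum_{a\atom}\mu(\{a\})\leq 1$, and equality throughout forces $\mu(\{a\})\in\{0,1\}$ for every atom together with total atomic mass $1$, i.e.\ $\mu$ is a Dirac measure; conversely $\delta_a$ has $|\hat\mu(f(t))|\equiv 1$ and hence average~$1$. I do not expect a genuine obstacle here, since this is a faithful translation of the discrete argument. The only substantive difference from the $\T$-case is the bookkeeping around the continuous-parameter averaging: the interchange of the time average with the integral against $\mu\times\ol\mu$ and the passage to the limit must be justified by joint measurability and dominated convergence rather than by the elementary bounded convergence along $N\to\infty$ used in Proposition~\ref{prop:W-seq}. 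That justification is the one place I would slow down and write out the details.
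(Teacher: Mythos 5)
Your proposal is correct and is exactly what the paper intends: the authors explicitly omit the proof of Proposition~\ref{prop:Wiener-function}, stating that it goes analogously to the $\T$-case, and your transcription of the proof of Proposition~\ref{prop:W-seq} (Fubini plus dominated convergence for (a), the two two-point test measures for the converse in (b), and the convexity argument of Corollary~\ref{cor:W-extremal} for (c)) is the faithful translation they have in mind, with the measurability and interchange-of-limits points correctly identified and handled.
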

We deduce the following analogue of Corollary \ref{cor:coset}. 

\begin{cor}
Let $f$ be good. 
 For a probability measure $\mu$ on $\R$
 \begin{equation*}
\limaveTau|\hat{\mu}(f(t))|^{2}\dd t=1  
\end{equation*}
holds if and only if $\mu$ is discrete with the property
\begin{equation}\label{eq:dicrete-extr-cont}
c(a-b)=1 \quad \text{for all atoms } a, b.
\end{equation}
In this case,  there exists $\alpha,\beta\in \R$ such that $\mu$ is supported on $\alpha\Z+\beta$.
\end{cor}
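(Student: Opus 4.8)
The plan is to prove this corollary as the continuous-parameter analogue of Corollary \ref{cor:coset}, following the identical strategy but with the Lebesgue integral average $\aveTau$ in place of $\aveN$ and the additive group structure of $\R$ in place of the multiplicative structure of $\T$. First I would invoke Proposition \ref{prop:Wiener-function} (a): since $f$ is good, the average $\limaveTau|\hat\mu(f(t))|^2\dd t$ exists and equals $\int_{\R^2}c(\theta-\varphi)\dd(\mu\times\ol\mu)(\theta,\varphi)$. Thus the hypothesis that this limit equals $1$ rewrites, using $\mu(\R)=1$ so that $\int_{\R^2}1\,\dd(\mu\times\ol\mu)=1$, as
\[
\int_{\R^2}\bigl(1-\Re\,c(\theta-\varphi)\bigr)\dd\mu(\theta)\dd\mu(\varphi)=0.
\]

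Next I would exploit positivity: by the properties of $c$ recorded before Proposition \ref{prop:Wiener-function} we have $|c|\le 1$, hence $1-\Re\,c(\theta-\varphi)\ge 0$ everywhere. A nonnegative integrand with zero integral against $\mu\times\mu$ must vanish $(\mu\times\mu)$-almost everywhere, so $\Re\,c(\theta-\varphi)=1$, and therefore $c(\theta-\varphi)=1$, for $(\mu\times\mu)$-a.e.\ pair $(\theta,\varphi)$. From this I would extract a single $\varphi_0$ such that $c(\theta-\varphi_0)=1$ for $\mu$-a.e.\ $\theta$, which means $\mu$ is concentrated on the set $\varphi_0+\{\theta:\,c(\theta)=1\}\subseteq \varphi_0+\Gamma$, where $\Gamma:=\{\theta:\,|c(\theta)|=1\}$. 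Invoking the second statement of the preceding Proposition, $\Gamma=\alpha\Z$ for some $\alpha\in\R$, so $\mu$ is supported on $\varphi_0+\alpha\Z$; setting $\beta:=\varphi_0$ gives the claimed coset $\alpha\Z+\beta$. Since this support is countable, $\mu$ is automatically discrete, and the condition $c(a-b)=1$ for all atoms $a,b$ holds because any two atoms lie in the same coset of $\Gamma$.

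For the converse, I would start from a discrete $\mu$ satisfying \eqref{eq:dicrete-extr-cont} and compute directly via Proposition \ref{prop:Wiener-function} (a):
\[
\limaveTau|\hat\mu(f(t))|^2\dd t
=\sum_{a,b\,\atom}c(a-b)\,\mu(\{a\})\mu(\{b\})
=\sum_{a,b\,\atom}\mu(\{a\})\mu(\{b\})=1,
\]
where the middle equality uses $c(a-b)=1$ and the last uses $\sum_a\mu(\{a\})=\mu(\R)=1$ for the discrete probability measure $\mu$.

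I do not anticipate a serious obstacle, as the argument transfers verbatim from the circle to the line once the group $\lrLambda$ of Corollary \ref{cor:coset} is replaced by $\Gamma=\alpha\Z$. The one point requiring mild care is the measure-theoretic extraction of a single $\varphi_0$ from the almost-everywhere statement: one should note that the set of ``bad'' $\varphi$ (for which $c(\theta-\varphi)=1$ fails on a $\mu$-positive set of $\theta$) is itself $\mu$-null by Fubini, so $\mu$-almost every $\varphi$ serves as $\varphi_0$, and in particular at least one does. The only structural difference from the $\T$ case is that here the relevant subgroup is $\alpha\Z$ rather than the group $G_d$ of roots of unity, reflecting the closed subgroups of $\R$ versus those of $\T$; this is already accounted for by the preceding Proposition, so the proof closes cleanly.
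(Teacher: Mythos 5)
Your proof is correct and follows essentially the same route as the paper, which deduces this corollary as the direct analogue of Corollary \ref{cor:coset}: apply Proposition \ref{prop:Wiener-function} (a), rewrite the hypothesis as $\int(1-\Re c(\theta-\varphi))\,\dd\mu\,\dd\mu=0$, use positivity and Fubini to locate the support in a coset of $\{\theta:\,|c(\theta)|=1\}=\alpha\Z$, and verify the converse by direct computation. Your explicit remark on extracting a single $\varphi_0$ is a welcome (if minor) clarification of a step the paper leaves implicit.
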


For $A\subseteq \R$, denote by $\langle A\rangle$ the additive subgroup generated by $A$. We call $\theta_{1}, \theta_{2}\in \R$  \emph{$A$-{dependent}} if their cosets coincide: $\theta_{1}+\langle A\rangle =\theta_{2}+\langle A
\rangle.$

\begin{thm}
For a good function $f$  with at most countable spectrum $\Lambda$  the following assertions hold:
\begin{abc}
\item
  For every complex Borel measure $\mu$ on $\R$
\[
\limaveTau|\hat{\mu}(f(t))|^{2}\dd t=\sum_{\theta\in\Lambda}c(\theta)\sum_{a\atom}\overline{\mu}(\{a-\theta\})\mu(\{a\}).
\]
In particular, for every continuous complex Borel measure $\mu$  on $\R$
\[
\limaveTau|\hat{\mu}(f(t))|^{2}\dd t=0.
\]

\item  For every Borel probability measure $\mu$  on $\R$
\begin{equation}\label{eq:Wiener-ineq-cont}
 \limaveTau|\hat{\mu}(f(t))|^{2}\dd t\leq\sum_{a\in U}\mu(a+\langle\Lambda\rangle)^{2},  
\end{equation}
where $U$  is a maximal set of $\Lambda$-independent atoms. The equality in {\upshape\eqref{eq:Wiener-ineq-cont}} holds if and only if $\mu$ satisfies {\upshape\eqref{eq:dicrete-extr-cont}} (but is not necessarily discrete).
\end{abc}
\end{thm}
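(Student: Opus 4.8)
The plan is to transcribe the proof of Theorem~\ref{thm:count} into the additive setting, replacing $\aveN$ and Proposition~\ref{prop:W-seq} by $\aveTau\cdots\dd t$ and Proposition~\ref{prop:Wiener-function}, and the pairing $\lambda_1\ol{\lambda_2}$ by the difference $\theta-\varphi$. For part (a) I would begin with the identity of Proposition~\ref{prop:Wiener-function}(a),
\[
\limaveTau|\hat{\mu}(f(t))|^{2}\dd t=\int_{\R^{2}}c(\theta-\varphi)\dd(\mu\times\ol{\mu})(\theta,\varphi),
\]
and exploit that $\Lambda$ is at most countable, so that $c(\theta-\varphi)=0$ unless $\theta-\varphi\in\Lambda$. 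Integrating first in $\theta$: for fixed $\varphi$ the only contributions come from $\theta\in\varphi+\Lambda$, a countable set, giving the inner value $\sum_{\theta\in\Lambda}c(\theta)\mu(\{\varphi+\theta\})$; integrating then in $\varphi$ and using that $\mu(\{\varphi+\theta\})\ne0$ forces $\varphi=a-\theta$ for an atom $a$, I obtain $\sum_{\theta\in\Lambda}c(\theta)\sum_{a\atom}\ol{\mu}(\{a-\theta\})\mu(\{a\})$, as claimed. These rearrangements are justified by absolute summability, since $|c|\le1$ and $\sum_{a\atom}\mu(\{a\})\le|\mu|(\R)<\infty$, so Fubini/Tonelli applies. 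The ``in particular'' assertion is then immediate, a continuous $\mu$ having no atoms.

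For part (b), with $\mu$ a probability measure, I would run the same chain of estimates as in Theorem~\ref{thm:count}(b): starting from the (nonnegative) real double sum of (a), pull $\mu(\{a\})$ out and use $|c|\le1$ to get
\[
\sum_{\theta\in\Lambda}c(\theta)\sum_{a\atom}\mu(\{a\})\mu(\{a-\theta\})\le\sum_{a\atom}\mu(\{a\})\sum_{\theta\in\Lambda}\mu(\{a-\theta\})\le\sum_{a\atom}\mu(\{a\})\,\mu(a+\langle\Lambda\rangle),
\]
and then regroup the atoms according to the cosets of $\langle\Lambda\rangle$ to reach $\sum_{a\in U}\mu(a+\langle\Lambda\rangle)^{2}$. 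The structural point that makes both the last inequality and the final regrouping work is that $\Lambda$ countable forces $\langle\Lambda\rangle$, and hence every coset $a+\langle\Lambda\rangle$, to be a countable subset of $\R$; consequently $\mu(a+\langle\Lambda\rangle)$ coincides with the atomic mass carried by that coset, which is precisely what turns the regrouping into an equality. For the equality statement I would track where the two ``$\le$'' become equalities: the triangle inequality forces $c(\theta)=1$ whenever $\mu(\{a-\theta\})>0$, and the last inequality forces every atom in the coset of $a$ to be of the form $a-\theta$ with $\theta\in\Lambda$; together these say $c(a-b)=1$ for any two atoms $a,b$, i.e.\ exactly \eqref{eq:dicrete-extr-cont}. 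Conversely, under \eqref{eq:dicrete-extr-cont} the double sum of (a) collapses directly to $\sum_{a\in U}\mu(a+\langle\Lambda\rangle)^{2}$, while $\mu$ may still carry a continuous part (which contributes $0$ to both sides) and so need not be discrete.

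The adaptation is essentially mechanical, and there is no genuine obstacle; the only place demanding a little care is the bookkeeping in (b), where one must invoke the countability of $\langle\Lambda\rangle$ to identify ``measure of a coset'' with ``atomic mass in the coset''. This is the same device that underlies the final equality in the chain of Theorem~\ref{thm:count}(b) and that isolates the equality case, and it relies only on $\Lambda$ being at most countable, which is exactly the standing hypothesis.
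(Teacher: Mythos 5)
Your transcription of Theorem \ref{thm:count} into the additive setting is exactly what the paper intends here: the proof of this theorem is omitted in the paper with the remark that the arguments go analogously to the circle case, and your part (a) together with the inequality chain in (b) reproduces that argument correctly. Your explicit observation that countability of $\Lambda$ forces $\langle\Lambda\rangle$, and hence each coset, to be countable, so that the measure of a coset coincides with its atomic mass, is the right bookkeeping point (the paper's discrete proof uses it silently), and writing $\sum_{\theta\in\Lambda}\mu(\{a-\theta\})\le\mu(a+\langle\Lambda\rangle)$ as an inequality is in fact more accurate than the equality appearing at the corresponding spot of the proof of Theorem \ref{thm:count}.

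The one genuine gap is in the ``only if'' half of the equality characterization. What your chain of equalities actually yields is: for every atom $a$, every atom $b$ of the \emph{same} coset $a+\langle\Lambda\rangle$ satisfies $a-b\in\Lambda$ and $c(a-b)=1$. Nothing in the chain constrains atoms lying in \emph{different} cosets, for which $c(a-b)=0$; your sentence ``together these say $c(a-b)=1$ for any two atoms $a,b$'' is therefore a non sequitur. Indeed the characterization as literally stated fails: take $f(t)=\lfloor t\rfloor$, so that $\Lambda=\langle\Lambda\rangle=\Z$, and $\mu=\tfrac12(\delta_0+\delta_{\sqrt{2}})$; then both sides of \eqref{eq:Wiener-ineq-cont} equal $\tfrac12$, yet $c(-\sqrt{2})=0\neq 1$. (The same phenomenon already occurs for Theorem \ref{thm:count} (b), e.g.\ with $k_n=2n$ and $\mu=\tfrac12(\delta_1+\delta_i)$, so this is a defect inherited from the paper, whose proof disposes of the equality assertion with ``the last assertion regarding the equality is clear''.) The statement becomes correct, and your argument then does prove it, once \eqref{eq:dicrete-extr-cont} is required only for pairs of atoms $a,b$ with $a-b\in\langle\Lambda\rangle$; your ``if'' direction is unaffected, since \eqref{eq:dicrete-extr-cont} forces all atoms into a single coset anyway.
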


With the continuous notion of density defined by $d(A) := \lim_{t\to\infty}\frac{\leb(A\cap[0,t])}{t}$ ($\leb$ being the Lebesgue measure) and a continuous version of the Koopman--von Neumann lemma, see, e.g., \cite[Lemma III.5.2]{E-book}, one defines extremal and Wiener extremal functions analogously to Definition \ref{def:extr} using the Fourier transform of (finite, Borel) measures on $\R$. Moreover, one has the analogous characterization of such functions as in Section \ref{sec:extr}, where (iii) in Theorem \ref{thm:extr} (and Theorem \ref{thm:extr-W}, respectively) is replaced by:
\begin{equation*}
e(f(t)\theta)\rightarrow 1 \text{  (in density) \quad implies }\theta=0. 
\end{equation*}


\subsection{Orbits of $C_{0}$-semigroups} 

We now consider orbits of strongly continuous ($C_0$-)semigroups on Banach spaces. For a thorough introduction to the theory of $C_0$-semigroups see, e.g., Engel, Nagel \cite{EN}. 
The following result is a time continuous analogue of Theorem \ref{thm:linop}.
\begin{thm}
 Let $f$ be good with at most countable spectrum, and let $(T(t))_{t \geq 0}$  be a $C_{0}$-semigroup of contractions with generator $A$  on a Hilbert space $H$. Then for any $x, y\in H$
\[
\limaveTau|(T(f(t))x|y)|^{2}\dd t=\sum_{\theta\in \R}c(\theta)\sum_{a\in \R}(P_{a-\theta}x|y)(P_{a}y|x)\ ,
\]
where $P_{\alpha}$ denotes the orthogonal projection onto 
\[
\mathrm{k}\mathrm{e}\mathrm{r}(i\alpha -A)=\bigcap_{t\geq 0}\mathrm{k}\mathrm{e}\mathrm{r}(e(\alpha t)- T(t)).
\]
\end{thm}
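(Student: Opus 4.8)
The plan is to mirror the proof of Theorem \ref{thm:linop} in the continuous setting, reducing the $C_0$-semigroup statement to the $\R$-version of Wiener's lemma along functions (Proposition \ref{prop:Wiener-function}) via the spectral theorem for the unitary part of the semigroup. First I would invoke the continuous analogue of the Sz.-Nagy--Foia\c{s} decomposition: a contraction $C_0$-semigroup on a Hilbert space $H$ splits orthogonally as $H=H_\uni\oplus H_\cnu$ into a unitary part (on which $(T(t))$ extends to a $C_0$-group of unitaries) and a completely non-unitary part, both subspaces being invariant under $T(t)$ and $T(t)^*$, with $(T(t))$ weakly stable on $H_\cnu$, i.e.\ $T(t)w\to 0$ weakly as $t\to\infty$ for every $w\in H_\cnu$. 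Writing $P_\uni$ for the orthogonal projection onto $H_\uni$ and $x=P_\uni x+(I-P_\uni)x$, I would expand $|(T(f(t))x\,|\,y)|^2$ into the three terms coming from the unitary and completely non-unitary components, exactly as in the proof of Theorem \ref{thm:linop}.

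Next I would argue that the cross term and the purely completely-non-unitary term vanish in the Ces\`aro mean. Since $f$ is eventually monotone with $\lim_{t\to\infty}f(t)=\pm\infty$, weak stability on $H_\cnu$ gives $(T(f(t))(I-P_\uni)x\,|\,y)\to 0$ as $t\to\infty$, so the corresponding contributions tend to $0$ pointwise and hence in the time-average by dominated convergence. This leaves
\[
\limaveTau|(T(f(t))x\,|\,y)|^2\dd t=\limaveTau|(T(f(t))P_\uni x\,|\,P_\uni y)|^2\dd t.
\]
On $H_\uni$ the semigroup is a $C_0$-group of unitaries, so by Stone's theorem the generator restricted to $H_\uni$ is $iB$ for a self-adjoint operator $B$, and the spectral theorem yields a complex spectral measure $\nu_{P_\uni x,P_\uni y}$ on $\R$ with
\[
(T(t)P_\uni x\,|\,P_\uni y)=\int_\R e(\alpha t)\dd\nu_{P_\uni x,P_\uni y}(\alpha)=\Hat{\nu}_{P_\uni x,P_\uni y}(t),
\]
matching the Fourier-transform convention used for measures on $\R$. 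Thus the remaining average is precisely $\limaveTau|\Hat{\nu}_{P_\uni x,P_\uni y}(f(t))|^2\dd t$, to which Proposition \ref{prop:Wiener-function}(a) applies.

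Finally I would apply the countable-spectrum version of the $\R$-Wiener lemma to evaluate this as $\sum_{\theta\in\R}c(\theta)\sum_{a}\nu_{P_\uni x,P_\uni y}(\{a-\theta\})\,\nu_{P_\uni y,P_\uni x}(\{a\})$, and then identify the atomic spectral masses with the spectral projections: for a unitary group the atom of $\nu_{P_\uni x,P_\uni y}$ at $a\in\R$ equals $(P_a x\,|\,y)$, where $P_a$ is the orthogonal projection onto $\ker(ia-A)=\bigcap_{t\ge 0}\ker(e(at)-T(t))$, and symmetrically for $\nu_{P_\uni y,P_\uni x}$. Substituting gives the stated formula. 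The main obstacle I anticipate is not the averaging argument but the two structural inputs that must be transcribed faithfully from operator semigroup theory: the existence and weak stability of the completely-non-unitary part in the $C_0$-semigroup setting (the continuous analogue of Foguel's result), and the precise bookkeeping relating the spectral atoms on $\R$ to the eigenprojections $P_a$ onto $\ker(ia-A)$, including checking that the intersection $\bigcap_{t\ge 0}\ker(e(at)-T(t))$ really coincides with the $a$-eigenspace of the generator. Once these identifications are in place, the computation is a routine adaptation of Theorem \ref{thm:linop}.
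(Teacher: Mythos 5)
Your proposal is correct and follows exactly the route the paper intends: the paper omits the proof, stating that it goes analogously to the discrete case, i.e.\ to Theorem \ref{thm:linop}, via the unitary/completely-non-unitary decomposition, weak stability on the completely non-unitary part, Stone's theorem and the spectral measure on $\R$, and the countable-spectrum $\R$-version of Wiener's lemma. The two structural inputs you flag (the semigroup version of the Sz.-Nagy--Foia\c{s} decomposition with weak stability, and the identification of spectral atoms with the projections $P_a$) are precisely the tools the paper points to in \cite{E-book}, so no gap remains.
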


Also Lemma \ref{lemma:Hilbert-contr}, Proposition \ref{prop:orbits-discr-meas} and Theorems \ref{thm:wienerextrOP}, \ref{thm:GNgeneral} and \ref{thm:seqTI} transfer to this situation with the natural and obvious changes. In particular, unimodular eigenvalues of the operator $T$ should be replaced by imaginary eigenvalues $a\in i\R$ of the generator $A$ (which correspond to unimodular eigenvalues $\ee^{ta}$ of each $T(t)$). We refer, e.g., to \cite{E-book} for the continuous parameter version of the tools needed, such as the decomposition into a unitary and a completely non-unitar part, the Jacobs--de Leeuw--Glicksberg decomposition for semigroups with relatively compact orbits, etc. 


\subsection{Subsequences}

Let now $(k_n)$ be an eventually strictly monotone sequence in $\R$ with
$\lim_{n\to\infty} k_n = \pm\infty$. Consider the function $f:[0,\infty)\to \R$ which is constant $k_n$ on the interval $[n-1,n)$ for $n=1,2,\dots$ and is otherwise $0$. We see that
\[
\frac{1}{N}\int_0^N e(f(t)\theta)\dd t=\aveN e(k_n\theta),\quad
\frac{1}{N}\int_0^N |\hat{\mu}(f(t))|^2 \dd t =
\aveN |\hat{\mu}(k_n)|^2,
\]
thus we are in the setting of subsequences as before, with the difference that the subsequences are
now \emph{real}. We call a subsequence $(k_n)$ in $\R$ \emph{good} if the corresponding $f$ is good, i.e., the
limit
\[
c(\theta) :=\limaveN e(k_n\theta)
\]
exists for every $\theta\in\R$. We further call $(k_n)$ \emph{ergodic} if it is good with 
$c =\mathbf{1}_{\{0\}}$.  For subsequences in $\N$ these notions coincide with the ones in Section \ref{sec:Wiener-subseq}. Moreover, the \emph{spectrum} of $(k_n)$ defined as $\{\theta\in\R:\, c(\theta)\neq 0\}$ coincides with the spectrum of the corresponding function $f$. 

Analogously,
we define $\R$-extremal and $\R$-Wiener extremal sequences using extremality or Wiener extremality of $f$, respectively. Thus $(k_n)$ is $\R$-extremal or $\R$-Wiener extremal if and only if it has the corresponding properties from Definition \ref{def:extr}.

As for functions, the results of Sections \ref{sec:Wiener-subseq}, \ref{sec:extr} and \ref{sec:operators} transfer verbatim up to natural and obvious changes. For example, as in the discrete case, we have the following abstract Wiener lemma along subsequences.

\begin{prop}[Wiener's lemma along subsequences, $\R$-version]\label{prop:wiener-subseq-cont} Let $(k_n)$ be a good sequence in $\R$.
\begin{abc}
\item For every complex Borel measure $\mu$ on $\R$
\[
\limaveN|\hat{\mu}(k_n)|^2=\int_{\R^2}c(t-s)\dd(\mu\times\ol{\mu})(t,s).
\]
\item The sequence $(k_n)$ is ergodic if and only if
\[
\limaveN |\hat{\mu}(k_n)|^2=\sum_{ a\atom} |\mu(\{a\})|^2
\]
holds for every complex Borel measure $\mu$ on $\R$.
\item For an ergodic sequence $(k_n)$ and a Borel probability measure $\mu$ on $\R$ the
limit above is $1$ if and only if $\mu$ is a Dirac measure.
\end{abc}
\end{prop}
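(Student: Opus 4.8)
The plan is to reproduce the proof of Proposition~\ref{prop:W-seq} essentially verbatim, under the dictionary that replaces the circle $\T$ by the line $\R$, the unimodular powers $\lambda^{k_n}$ by the additive characters $e(k_n\theta)$, and the multiplicative translation $\lambda_1\ol{\lambda_2}$ by the additive one $s-s'$. The only structural facts about $c$ the argument uses are that for a good real sequence the limit $c(\theta)=\limaveN e(k_n\theta)$ exists for every $\theta\in\R$, is Borel measurable as a pointwise limit of the continuous functions $\theta\mapsto\aveN e(k_n\theta)$, and satisfies $c(0)=1$, $c(-\theta)=\ol{c(\theta)}$ and $|c(\theta)|\le1$, all of which were already recorded in the continuous setting. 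An even shorter route is to invoke Proposition~\ref{prop:Wiener-function} directly: for the step function $f$ associated with $(k_n)$ one has the exact identity $\tfrac1N\int_0^N|\hat\mu(f(t))|^2\dd t=\aveN|\hat\mu(k_n)|^2$, and since $f$ is constant on unit intervals the continuous Ces\`aro limit $\limaveTau$ coincides with $\limaveN$, so all three parts transfer at once.

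For part (a) I would expand the square and use Fubini's theorem to pull the finite average inside the finite product measure $\mu\times\ol\mu$, obtaining
\[
\aveN|\hat\mu(k_n)|^2=\int_{\R^2}\Big(\aveN e\big(k_n(s-s')\big)\Big)\dd(\mu\times\ol\mu)(s,s').
\]
The inner average is bounded by $1$ and converges pointwise to $c(s-s')$ by goodness, so Lebesgue's dominated convergence theorem, with dominating constant $1\in\Ell^1(|\mu|\times|\ol\mu|)$, yields the stated formula. For the forward direction of (b), inserting $c=\one_{\{0\}}$ and applying Fubini once more gives
\[
\int_{\R^2}\one_{\{0\}}(s-s')\dd(\mu\times\ol\mu)=\int_\R\mu(\{s'\})\dd\ol\mu(s')=\sum_{a\atom}|\mu(\{a\})|^2.
\]
For the converse I would argue by contraposition: if $(k_n)$ is not ergodic, pick $\theta_0\ne0$ with $c(\theta_0)\ne0$ and test the formula of (a) on the two-point measures $\mu=\tfrac12(\delta_0+\delta_{\theta_0})$ and $\mu=\tfrac12(\delta_0+i\delta_{\theta_0})$; a direct evaluation yields $\tfrac12+\tfrac12\Re c(\theta_0)$ and $\tfrac12-\tfrac12\Im c(\theta_0)$ respectively, while $\sum_{a\atom}|\mu(\{a\})|^2=\tfrac12$ in both cases. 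Since $c(\theta_0)\ne0$, at least one of $\Re c(\theta_0)$, $\Im c(\theta_0)$ is nonzero, so the ergodic formula fails for the corresponding $\mu$.

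Part (c) then follows from (b) exactly as in Corollary~\ref{cor:W-extremal}: for a probability measure the ergodic formula gives $1=\sum_{a\atom}\mu(\{a\})^2\le\sum_{a\atom}\mu(\{a\})\le1$, forcing $\mu(\{a\})\in\{0,1\}$ for every atom and hence $\mu=\delta_a$; conversely $|\hat\mu(k_n)|=|e(k_n a)|=1$ for $\mu=\delta_a$, so the limit equals $1$. I do not anticipate a genuine obstacle, as the content is a pure translation of the discrete case; the only points that require a word of care are the measurability and integrability of $(s,s')\mapsto c(s-s')$ against the finite measure $\mu\times\ol\mu$, which license the dominated-convergence interchange in~(a), and---should one prefer the reduction to Proposition~\ref{prop:Wiener-function}---the elementary verification that for a function constant on unit intervals the average $\limaveTau$ agrees with $\limaveN$.
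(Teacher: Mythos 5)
Your proposal is correct and follows essentially the same route as the paper: the authors explicitly omit the proof, stating that the results of Section 2 ``transfer verbatim up to natural and obvious changes,'' and they even set up the step-function reduction to Proposition \ref{prop:Wiener-function} that you mention as the alternative route. Your translated Fubini/dominated-convergence argument for (a), the two-point test measures for the converse in (b), and the Corollary \ref{cor:W-extremal}-style argument for (c) all match the discrete proof exactly, and the two points you flag (Borel measurability of $(s,s')\mapsto c(s-s')$ and the agreement of $\limaveTau$ with $\limaveN$ for a function constant on unit intervals) are indeed the only details requiring verification.
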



\subsection{Arithmetic sequences}\label{subsec:cont-arithm}

In this subsection we study examples of $\R$-extremal and $\R$-Wiener extremal sequences. The situation here is very different from the time discrete case. 

We begin with the following observation.
\begin{ex}[Sequences from a copy of $\Z$]
Assume that there is $a>0$ such that $(k_n)\subset a\Z$. Then $(k_n)$ is not $\R$-extremal and hence not $\R$-Wiener extremal even for discrete measures. Indeed, consider $\mu:=\frac12(\delta_{1/a}+\delta_{-1/a})$. Then $\mu$ is not Dirac with
\[
\hat{\mu}(k_n)=\int_\R e(k_n\theta) \dd\mu(\theta)=\frac{e(k_n/a)+e(-k_n/a)}2=1\quad \text{for all $n\in \N$}.
\]
More generally, any measure $\nu$ in $\cconv \{\delta_{k/a}:\, k\in\Z\}$ provides a similar example.
\end{ex}
As a corollary, unlike the discrete case, polynomials with rationally dependent coefficients, primes or such polynomials of primes, though being good with countable spectrum, are \emph{not} $\R$-extremal and hence not $\R$-Wiener extremal. Note that for such sequences $(k_n)$ the periodic  unitary group of translations on $\Ell^2([0,a])$ satisfies $T(an)=I$ for every $n\in\Z$ and thus presents a counterexample to the continuous analogues of (ii) and (iii) in Proposition \ref{prop:orbits-discr-meas}, Theorems \ref{thm:wienerextrOP}, \ref{thm:GNgeneral} and  \ref{thm:seqTI} and 1)--4) from the introduction. 

On the other hand, polynomials with rationally dependent non-constant coefficients and  rationally independent constant term are  $\R$-Wiener extremal and hence $\R$-extremal. Indeed, without loss of generality let $k_n=P(n)+b$ for a polynomial $P$ with coefficients from $a\Z$ and $b$ being rationally independent from $a$. Since the spectrum of $(k_n)$ is countable, it suffices to show that $c(\theta)=1$ implies $\theta=0$ by a continuous analogue of Theorem \ref{thm:extr-W}. As in Section \ref{sec:pol-primes} one can prove that 
\[
c(\theta)=e(b\theta)\limaveN e\left(\tfrac{P(n)}a a\theta\right)=
\begin{cases}
\displaystyle\frac1q \sum_{r=1}^q e(b\theta) e(\tfrac{P(r)d}{aq} ),\quad&\text{if } a\theta=\frac{d}q \in\Q,\\[4ex]
0,\quad &\text{if }a\theta\notin \Q.
\end{cases}
\]
So  $c(\theta)=1$ implies $a\theta=d/q\in\Q$ for some $d\in\Z$, $q\in\N$ and $(b+P(r))d\in a\Z$ for all $r\in\{1,\ldots,q\}$. Since $a,b$ are rationally independent, $d=\theta=0$. Analogously, for such polynomials $P$ the sequence $(P(p_n))$ ($p_n$ denoting the $n^\text{th}$ prime) is $\R$-Wiener extremal and hence $\R$-extremal, too.

Consider finally $P\in\R[\cdot]$ with rationally independent non-constant coefficients. Then for $k_n:=P(n)$, $n\in\N$, we have by Weyl's equidistribution theorem 
\[
c(\theta)=0 \quad\text{for all } \theta\neq 0,
\]
i.e., $(P(n))$ is ergodic. Thus by Proposition \ref{prop:wiener-subseq-cont} (c) $(P(n))$ is $\R$-Wiener extremal and hence $\R$-extremal. Moreover, a suitable modification of Lemma \ref{lemma:limit-pol-prim} using Weyl's equidistribution theorem for polynomials (and the fact that the product of finitely many nilsequences is again a nilsequence) shows that for such polynomials the sequence $(P(p_n))$ is ergodic, and hence $\R$-Wiener extremal.


\subsection{Orbits of $C_0$-semigroups revisited} 
We thus have the following continuous parameter versions of the results from the introduction being the generalizations of the respective results of Goldstein \cite{G2} and Goldstein, Nagy \cite{G3}. (For the Jacobs--de Leeuw--Glicksberg decomposition for $C_0$-semigroups with relatively compact orbits see, e.g., \cite[Theorem I.1.20]{E-book}.)
\begin{thm}\label{thm:sgr-extr}
Let $(k_n)$ be of the form $(P(n))$ or $(P(p_n))$, where $P\in\R[\cdot]$ has either rationally independent non-constant coefficients, or rationally dependent non-constant coefficients which are rationally independent from the constant coefficient, and we suppose that the leading coefficient of $P$ is positive.
 
\begin{abc}
\item 
Let $(T(t))_{t \geq 0}$  be a $C_{0}$-semigroup of contractions with generator $A$ on a Hilbert space $H$. Then for any $x, y\in H$
\[
\limaveN|(T(k_n)x|y)|^{2}=\sum_{a\in \R}|(P_{a}x|y)|^2,
\]
where $P_a$ denotes the orthogonal projection onto $\ker(a-T)$.
\noindent Moreover, 
\begin{equation*}
\lim_{N\to \infty} \frac1N\sum_{n=1}^N |\sprod{T(k_n)x}{x}|^2=\|x\|^4
\end{equation*}
for $x\neq 0$ implies that  $x$ is an eigenvector of $A$ with imaginary eigenvalue.
\item Let $E$ be a Banach space and $(T(t))_{t \geq 0}$ be a bounded $C_{0}$-semigroup  on $E$ with generator $A$.  Then  
\[
\lim_{N\to \infty} \frac1N\sum_{n=1}^N |\dprod{T(k_n)x}{x'}|^2=|\dprod{x}{x'}|^2 \quad \text{for every } x'\in E'
\]
for $x\in E\setminus\{0\}$ with relatively compact orbit implies that  $x$ is an eigenvector of $A$ with imaginary eigenvalue.
\end{abc}
\end{thm}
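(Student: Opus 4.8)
The plan is to reduce the whole statement to the arithmetic facts isolated in Subsection~\ref{subsec:cont-arithm} together with the continuous-parameter analogues of the abstract results of Sections~\ref{sec:Wiener-subseq}--\ref{sec:operators}. The analysis in Subsection~\ref{subsec:cont-arithm} shows that each admissible $(P(n))$ and $(P(p_n))$ is good with at most countable spectrum; it is moreover \emph{ergodic} ($c=\mathbf{1}_{\{0\}}$) when the non-constant coefficients of $P$ are rationally independent, and $\R$-Wiener extremal (hence $\R$-extremal) in both admissible cases. These are the only features of $(k_n)$ I would feed into the operator machinery. The positivity of the leading coefficient is exactly what guarantees $k_n\to+\infty$, placing the sequences under the standing assumptions of the continuous-parameter framework.

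The displayed identity in (a) is the ergodic specialization, and I would establish it whenever $(k_n)$ is ergodic, i.e.\ in the rationally independent case. Applying the continuous analogue of Theorem~\ref{thm:linop} to the contraction semigroup $(T(t))$ (discarding the completely non-unitary part, on which $(T(t))$ is weakly stable by Foguel~\cite{Foguel}, and identifying $(T(t)x|y)=\Hat{\mu}_{x,y}(t)$ via the spectral theorem on $H_\uni$), one gets $\limaveN|(T(k_n)x|y)|^2=\sum_{\theta\in\R}c(\theta)\sum_{a\in\R}(P_{a-\theta}x|y)(P_ay|x)$, with $P_a$ the orthogonal projection onto $\ker(ia-A)$. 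Since $c=\mathbf{1}_{\{0\}}$, only $\theta=0$ survives, and self-adjointness of $P_a$ gives $(P_ay|x)=\overline{(P_ax|y)}$, collapsing the double sum to $\sum_{a\in\R}|(P_ax|y)|^2$, as claimed.

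The remaining implications require only $\R$-Wiener extremality and therefore hold in both admissible cases. For the ``Moreover'' part of (a) I would invoke the continuous analogue of Theorem~\ref{thm:wienerextrOP}, direction (i)$\Rightarrow$(ii): since $(k_n)$ is $\R$-Wiener extremal, the hypothesis $\limaveN|(T(k_n)x|x)|^2=\|x\|^4$ forces $x$ to be a common unimodular eigenvector of all $T(t)$, i.e.\ an eigenvector of $A$ with imaginary eigenvalue. For (b), $\R$-Wiener extremality implies $\R$-Wiener extremality for discrete measures, so the continuous analogue of Proposition~\ref{prop:orbits-discr-meas}(a), implication (i)$\Rightarrow$(iii), applies to the bounded $C_0$-semigroup and the vector $x$ with relatively compact orbit, again yielding an eigenvector of $A$ with imaginary eigenvalue. (The positive-upper-density route through Proposition~\ref{prop:wienerextrOP-WAP} is unavailable, as these sequences have density zero; this is precisely why (b) is stated for relatively \emph{compact} orbits.)

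I expect the genuine work to sit in two places, both already quarantined earlier. First, the arithmetic input for the polynomials-of-primes case rests on the continuous adaptation of Lemma~\ref{lemma:limit-pol-prim}, hence on the Green--Tao orthogonality estimate combined with Weyl equidistribution; this is where ergodicity and $\R$-Wiener extremality are actually proved. Second, one must verify that the abstract theorems transfer faithfully to $C_0$-semigroups: the unitary/completely-non-unitary splitting with Foguel weak stability, the Jacobs--de Leeuw--Glicksberg decomposition for semigroups with relatively compact orbits, and—for the semigroup form of the implication underlying (b)—the strong continuity of the lifted semigroup, for which I would follow Mustafayev~\cite{Mu}. These technical transfers, rather than any new estimate, are the main obstacle.
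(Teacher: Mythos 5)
Your proposal is correct and follows essentially the same route the paper takes: the paper offers no separate argument for this theorem, but derives it by combining the arithmetic analysis of Subsection~\ref{subsec:cont-arithm} (ergodicity, respectively $\R$-Wiener extremality, of $(P(n))$ and $(P(p_n))$ under the stated coefficient conditions) with the continuous-parameter analogues of Theorem~\ref{thm:linop}, Theorem~\ref{thm:wienerextrOP} and Proposition~\ref{prop:orbits-discr-meas}, exactly as you do, including the reliance on Foguel's weak stability on $H_\cnu$, the Jacobs--de Leeuw--Glicksberg decomposition, and Mustafayev's lifting argument.

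One point deserves emphasis. You restrict the first display of (a) to the ergodic case (rationally independent non-constant coefficients), whereas the theorem as printed asserts it for both admissible classes. Your restriction is in fact forced: for, say, $k_n=n^2+\pi$ the limit function satisfies $c(d)=e(\pi d)\neq 0$ for every $d\in\Z$, so the sequence is $\R$-Wiener extremal but not ergodic, and for the unitary group $T(t)=\diag(1,e(t))$ on $\C^2$ with $x=y=(1/\sqrt2,1/\sqrt2)$ one computes $|\sprod{T(k_n)x}{x}|^2=\tfrac14|1+e(\pi)|^2=\tfrac{1+\cos(2\pi^2)}{2}\neq\tfrac12=\sum_a|\sprod{P_ax}{x}|^2$. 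So the identity genuinely fails in the second class (only the general formula with the extra $c(\theta)$-weighted cross terms survives there), and what your argument delivers --- the identity in the ergodic case plus the two extremality implications in both cases --- is the correct content of the theorem. This is a discrepancy with the statement as printed rather than a gap in your argument.
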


\noindent
For example, (a) and (b) in Theorem \ref{thm:sgr-extr} fail for $(\pi n^2)$ or $(p_n)$ but hold for $(n^2+\pi)$ or $(p_n+\sqrt{2})$.

\smallskip 

We finish with the following extremality property of primes being a continuous analogue of Corollary \ref{cor:primesgelfand}, with analogous proof. 

\begin{thm}\label{thm:primesgelfand-R}
Let $a>0$ and $b\in\R$ be rationally independent. Then for every $C_{0}$-semigroup $(T(t))_{t \geq 0}$ on a Banach space $E$ with generator $A$ and every $x\in E\setminus \{0\}$, 
\begin{equation*}
\lim_{n\to\infty} |\dprod{ T(ap_n+b) x}{x'} |=|\dprod{ x}{x'}| 
\quad for\,  every\,  x'\in E'
\end{equation*}
implies that $x$ is an eigenvector of $A$ with imaginary eigenvalue. 
 As a consequence, $\lim_{n\to\infty}T(ap_n+b)= \Id$ in the weak operator topology implies $T(t)=\Id$ for every $t\geq 0$.
\end{thm}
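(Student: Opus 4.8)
The plan is to mimic the proof of Corollary \ref{cor:primesgelfand} in the continuous parameter setting, realising Theorem \ref{thm:primesgelfand-R} as an instance of the $\R$-analogue of Theorem \ref{thm:bddgapsextr} (equivalently, of the implication (i)$\Rightarrow$(iii) in Theorem \ref{thm:GNgeneral}). Two facts about the real sequence $k_n:=ap_n+b$ are needed. First, by Subsection \ref{subsec:cont-arithm} applied to the linear polynomial $P(x)=ax$ with constant term $b$ rationally independent from the non-constant coefficient $a$, the sequence $(ap_n+b)$ is $\R$-Wiener extremal, hence $\R$-extremal, and in particular $\R$-extremal for discrete measures. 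Second, by Zhang \cite{Zhang} the prime gaps satisfy $\liminf_n(p_{n+1}-p_n)<\infty$, so $\liminf_n(k_{n+1}-k_n)=a\liminf_n(p_{n+1}-p_n)<\infty$; since the recurring gaps are integers, there are a fixed $d_0\in\N$ and a subsequence $(n_\ell)$ with $p_{n_\ell+1}-p_{n_\ell}=d_0$, whence $k_{n_\ell+1}-k_{n_\ell}=ad_0=:d>0$ for all $\ell$.

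First I would extract the eigenrelation $T(d)x=\lambda x$ with $|\lambda|=1$, exactly as in the proof of Theorem \ref{thm:bddgapsextr}. Applying the hypothesis to the functional $T(d)'x'$ in place of $x'$ gives $\lim_n|\dprod{T(k_n+d)x}{x'}|=|\dprod{T(d)x}{x'}|$ for every $x'\in E'$; evaluating along $(n_\ell)$, where $k_{n_\ell}+d=k_{n_\ell+1}$, and comparing with the hypothesis itself yields $|\dprod{T(d)x}{x'}|=|\dprod{x}{x'}|$ for all $x'\in E'$. As this forces $T(d)x$ to lie in the closed span of $x$ (the pre-annihilator of the annihilator of $x$), the Hahn--Banach theorem gives $T(d)x=\lambda x$, and testing against a functional not vanishing at $x$ shows $|\lambda|=1$.

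The main new point compared with the discrete case is that here the cyclic subspace $E_0:=\ol{\lin}\{T(t)x:\,t\geq0\}$ is typically infinite-dimensional, so boundedness of the semigroup on it (needed to invoke the $\R$-analogue of Proposition \ref{prop:orbits-discr-meas}\,(b)) is no longer automatic as it was for the finite-dimensional $E_0$ in Theorem \ref{thm:bddgapsextr}. I would resolve this by rescaling: pick $\alpha\in\R$ with $\ee^{\rmi\alpha d}=\lambda$ and pass to the $C_0$-semigroup $\tilde T(t):=\ee^{-\rmi\alpha t}T(t)$, for which $\tilde T(d)x=x$. Then $t\mapsto\tilde T(t)x$ is $d$-periodic and continuous, hence has compact orbit; moreover $\tilde T(d)$ fixes each $\tilde T(t)x$, so $\tilde T(d)|_{E_0}=\id$ and $\tilde T$ is $d$-periodic on the invariant closed subspace $E_0$. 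Together with the local boundedness of any $C_0$-semigroup on $[0,d]$, this makes $(\tilde T(t))_{t\geq0}$ bounded on $E_0$ with $x$ of relatively compact orbit. Since the hypothesis is unchanged under the unimodular rescaling, as $|\dprod{\tilde T(k_n)x}{x'}|=|\dprod{T(k_n)x}{x'}|$, the $\R$-analogue of Proposition \ref{prop:orbits-discr-meas}\,(b) applies and gives $\tilde T(t)x=\ee^{\rmi\beta t}x$ for some $\beta\in\R$; undoing the rescaling yields $T(t)x=\ee^{\rmi(\alpha+\beta)t}x$, i.e.\ $x$ is an eigenvector of the generator $A$ with imaginary eigenvalue.

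Finally, for the consequence, weak-operator convergence $T(ap_n+b)\to\Id$ gives $|\dprod{T(k_n)x}{x'}|\to|\dprod{x}{x'}|$ for all $x\in E$, $x'\in E'$, so by the first part every nonzero $x\in E$ is an eigenvector of $A$. A standard argument then forces $A=\rmi\gamma\,\Id$ for a single $\gamma\in\R$, whence $T(t)=\ee^{\rmi\gamma t}\Id$; the assumed convergence now reads $\ee^{\rmi\gamma k_n}\to1$, and the $\R$-extremality of $(k_n)$ (the $\R$-analogue of the characterization in Theorem \ref{thm:extr}) forces $\gamma=0$, i.e.\ $T(t)=\Id$ for all $t\geq0$. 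The principal obstacle throughout is precisely the boundedness-on-$E_0$ step: it is the only place where passing from a merely strongly continuous semigroup to the bounded setting of Proposition \ref{prop:orbits-discr-meas} requires genuine work, and the rescaling to the periodic case is what makes it routine.
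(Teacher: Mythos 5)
Your proposal is correct and follows essentially the same route as the paper, which proves this theorem "with analogous proof" to Corollary \ref{cor:primesgelfand}: the $\R$-(Wiener) extremality of $(ap_n+b)$ from Subsection \ref{subsec:cont-arithm}, Zhang's bounded prime gaps, and the continuous analogue of Theorem \ref{thm:bddgapsextr} via Proposition \ref{prop:orbits-discr-meas}\,(b). Your rescaling argument establishing boundedness of the semigroup on the cyclic subspace $E_0$ is a correct and welcome elaboration of a step the paper subsumes under "natural and obvious changes."
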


\providecommand{\bysame}{\leavevmode\hbox to3em{\hrulefill}\thinspace}
\providecommand{\MR}{\relax\ifhmode\unskip\space\fi MR }
\providecommand{\MRhref}[2]{%
  \href{http://www.ams.org/mathscinet-getitem?mr=#1}{#2}
}
\providecommand{\href}[2]{#2}

%

\end{document}